\theoremstyle{definition}
\newtheorem{mydef}{Definition}[section]
\newtheorem{lem}[mydef]{Lemma}
\newtheorem{thm}[mydef]{Theorem}
\newtheorem{cor}[mydef]{Corollary}
\newtheorem{question}[mydef]{Question}
\newtheorem{hypothesis}[mydef]{Hypothesis}
\newtheorem{defin}[mydef]{Definition}
\newtheorem{example}[mydef]{Example}
\newtheorem{remark}[mydef]{Remark}
\newtheorem{notation}[mydef]{Notation}
\newtheorem{fact}[mydef]{Fact}
\newcommand{\fct}[2]{{}^{#1}#2}
\newcommand{\bM}{\bar{M}}
\newcommand{\ba}{\bar{a}}
\newcommand{\bb}{\bar{b}}
\newcommand{\bc}{\bar{c}}
\newcommand{\bd}{\bar{d}}
\newcommand{\cf}[1]{\text{cf} (#1)}
\newcommand{\seq}[1]{\langle #1 \rangle}
\newcommand{\rest}{\upharpoonright}
\newcommand{\s}{\mathfrak{s}}
\newcommand{\gs}{\s}
\newcommand{\bs}{\text{bs}}
\newcommand{\leap}[1]{\le_{#1}}
\newcommand{\ltap}[1]{<_{#1}}
\newcommand{\geap}[1]{\ge_{#1}}
\newcommand{\lta}{\ltap{\K}}
\newcommand{\lea}{\leap{\K}}
\newcommand{\gea}{\geap{\K}}
\newcommand{\ltas}{\ltap{\s}}
\newcommand{\leas}{\leap{\s}}
\newcommand{\K}{\mathfrak{K}}
\newbox\noforkbox \newdimen\forklinewidth
\noforkbox\hbox{\lower 2pt\box1\lower
2pt\box0\relax}
\def\unionstick{\mathop{\copy\noforkbox}\limits}
\newcommand{\nf}{\unionstick}
\newcommand{\nfs}[4]{#2 \nf_{#1}^{#4} #3}
\def\1nf{\unionstick^{(1)}}
\def\2nf{\unionstick^{(2)}}
\def\3nf{\unionstick^{(3)}}
\newcommand{\tp}{\otp}
\newcommand{\Ss}{\oS}
\newcommand{\oSp}[1]{\mathscr{S}_{#1}}
\newcommand{\oS}{\oSp{}}
\newcommand{\oSs}{\oSp{\s}}
\newcommand{\oSsbs}{\oSs^{\bs}}
\newcommand{\Sbs}{\oSsbs}
\newcommand{\Ii}{\mathbb{I}}
\newcommand{\Ll}{\mathbb{L}}
\newcommand{\otp}{\mathbf{ortp}}
\newcommand{\goodp}{\text{good}^+}
\newcommand{\Eat}{E_{\text{at}}}
\newcommand{\PC}{\operatorname{PC}}
\newcommand{\WNF}{\operatorname{WNF}}
\newcommand{\LWNF}{\operatorname{LWNF}}
\newcommand{\VWNF}{\operatorname{VWNF}}
\newcommand{\RWNF}{\operatorname{RWNF}}
\newcommand{\NF}{\operatorname{NF}}
\newcommand{\F}{\mathcal{F}}
\newcommand{\LS}{\text{LS}}
\title{Abstract elementary classes stable in $\aleph_0$}
\date{\today\\
  AMS 2010 Subject Classification: Primary 03C48. Secondary: 03C45, 03C52, 03C55, 03C75.}
\keywords{abstract elementary classes; $\aleph_0$-stability; good frames; superlimit; locality}
\author {Saharon Shelah}
\email{shelah@math.huji.ac.il}
\urladdr{http://shelah.logic.at}
\address{Einstein Institute of Mathematics\\
Edmond J. Safra Campus, Givat Ram\\
The Hebrew University of Jerusalem\\
Jerusalem, 91904, Israel, and Department of Mathematics\\
 Hill Center - Busch Campus \\ 
 Rutgers, The State University of New Jersey \\
 110 Frelinghuysen Road \\
 Piscataway, NJ 08854-8019, USA}
\thanks{The first author would like to thank the Israel Science Foundation for partial support of this research (Grant No. 242/03).}
\thanks{Research partially supported by European Research Council grant 338821, and by National Science Foundation grant no: 136974. 1119 on Shelah's publication  list}
\author{Sebastien Vasey}
\email{sebv@math.harvard.edu}
\urladdr{http://math.harvard.edu/\textasciitilde sebv/}
\address{Department of Mathematical Sciences, Carnegie Mellon University, Pittsburgh, Pennsylvania, USA}
\address{Current address: Department of Mathematics \\ Harvard University \\ Cambridge, Massachusetts, USA}
\begin{document}
\begin{abstract}
  We study abstract elementary classes (AECs) that, in $\aleph_0$, have amalgamation, joint embedding, no maximal models and are stable (in terms of the number of orbital types). Assuming a locality property for types, we prove that such classes exhibit superstable-like behavior at $\aleph_0$. More precisely, there is a superlimit model of cardinality $\aleph_0$ and the class generated by this superlimit has a type-full good $\aleph_0$-frame (a local notion of nonforking independence) and a superlimit model of cardinality $\aleph_1$. We also give a supersimplicity condition under which the locality hypothesis follows from the rest.
\end{abstract}

\maketitle

\section{Introduction}
\subsection{Motivation}
In \cite{sh88} (a revised version of which appears as \cite[Chapter I]{shelahaecbook}, from which we cite), the first author introduced \emph{abstract elementary classes} (AECs): a semantic framework generalizing first-order model theory and also encompassing logics such as $\Ll_{\omega_1, \omega}$. He studied $\PC_{\aleph_0}$-representable AECs (roughly, AECs which are reducts of a class of models of a first-order theory omitting a countable set of types) and generalized and improved some of his earlier results on $\Ll_{\omega_1, \omega}$ \cite{sh87a, sh87b} and $\Ll_{\omega_1, \omega} (Q)$ \cite{sh48}.

For example, fix a $\PC_{\aleph_0}$-representable AEC $\K$ and assume that it is categorical in $\aleph_0$. Assuming $2^{\aleph_0} < 2^{\aleph_1}$ and $1 \le \Ii (\K, \aleph_1) < 2^{\aleph_1}$, the first author shows (without even assuming $\PC_{\aleph_0}$-representability) \cite[I.3.8]{shelahaecbook} that $\K$ has amalgamation in $\aleph_0$. Further, \cite[\S I.4, \S I.5]{shelahaecbook}, it has a lot of structure in $\aleph_0$ and assuming more set-theoretic assumptions as well as few models in $\aleph_2$, $\K$ has a superlimit model in $\aleph_1$ \cite[I.5.34, I.5.40]{shelahaecbook}. This means roughly (see Section \ref{prelim-sec}) that there is a saturated model in $\aleph_1$ and that the union of an increasing chain of type $\omega$ consisting of saturated models of cardinality $\aleph_1$ is saturated.

The reader can think of the existence of a superlimit in $\aleph_1$ as a step toward showing that the models of cardinality $\aleph_1$ behave in a ``superstable-like'' way. Indeed several recent works \cite{vandieren-chainsat-apal, vv-symmetry-transfer-afml, bv-sat-afml, gv-superstability-jsl} have connected superlimits with other definitions of superstability in AECs, including uniqueness of limit models and local character of orbital splitting.

Another notable consequence of the existence of a superlimit in $\aleph_1$ is that it implies that there is a model of cardinality $\aleph_2$. This ties back to a result of the first author: \cite[I.3.11]{shelahaecbook}: for a $\PC_{\aleph_0}$ AEC, categoricity in $\aleph_0$ and $\aleph_1$ implies the existence of a model in $\aleph_2$. The argument first establishes, using only categoricity in $\aleph_0$ and few models in $\aleph_1$, that there is a pair $M, N$ of models in $\K_{\aleph_1}$ such that $M \lta N$ and then uses, in essence, that (by $\aleph_1$-categoricity) these models are superlimits. In this context, the very strong hypotheses make it possible to avoid referring to any stability-theoretic notions. Still, in more complicated frameworks the existence of a superlimit model in $\aleph_1$ can be thought of as a key conceptual step toward proving existence of models in higher cardinality and more generally developing a stability theory cardinal by cardinal.

The arguments for the results from \cite[\S I.5]{shelahaecbook} discussed in the second paragraph of this introduction are complicated by the lack of $\aleph_0$-stability: one can only get that there are $\aleph_1$-many orbital types over countable models. The workaround there is to redefine the ordering (but not the class of models) to obtain a stable class, see \cite[I.5.29]{shelahaecbook}. If the AEC is ``nicely-presented'', e.g.\ a class of models of an $\Ll_{\omega_1,\omega}$-sentence or more generally a finitary AEC \cite{finitary-aec}, then this difficulty does not occur (see \cite{baldwin-larson-iterated}): $\aleph_0$-stability follows from few models in $\aleph_1$ and $2^{\aleph_0} < 2^{\aleph_1}$. One can also obtain $\aleph_0$-stability by starting with only countably-many models in $\aleph_1$ \cite[3.18]{almost-galois-stable}. Finally, it is worth noting that (assuming amalgamation and joint embedding in $\aleph_0$), $\aleph_0$-stability is upward absolute for $\PC_{\aleph_0}$-AECs \cite{lrsh1073}.

\subsection{Main results}

The bottom line is that $\aleph_0$-stability holds in several cases of interest. In fact, there are no known examples which (under $2^{\aleph_0} < 2^{\aleph_1}$) are categorical in $\aleph_0$, have few models in $\aleph_1$, and are \emph{not} $\aleph_0$-stable (see \cite[Question 3.15]{almost-galois-stable}). Thus in the present paper, we \emph{start} with stability in $\aleph_0$ (and often amalgamation and categoricity in $\aleph_0$). Our goal is to say as much as we can on the structure of the class, in particular to get superstable-like behavior in $\aleph_0$ and $\aleph_1$, \emph{without} assuming a non-ZFC hypothesis or $\Ii (\K, \aleph_1) < 2^{\aleph_1}$.

One of our first results (Theorem \ref{def-thm}) is that $\aleph_0$-stability (together with amalgamation and $\aleph_0$-categoricity) imply that the class $\K$ is already $\PC_{\aleph_0}$-representable. We also show that the assumption of categoricity in $\aleph_0$ is not really needed: without assuming it, one can find a superlimit in $\aleph_0$ and change to the class generated by that superlimit, which will be categorical in $\aleph_0$. In fact, we prove (Theorem \ref{brimmed-homog-equiv}) that one can characterize brimmed models (also called limit models in the literature) as those that are homogeneous for orbital types. This has as immediate consequence that the brimmed model of cardinality $\aleph_0$ is superlimit (Corollary \ref{sl-cor}). This last result sheds light on an argument of Lessmann \cite{lessmann-upward-transfer} and answers a question of Fred Drueck (see footnote 3 on \cite[p.~25]{drueckthesis}), who asked when this equality held. The argument works more generally assuming only density of amalgamation bases, as in \cite{shvi635}.

For the main result of this paper, we assume that orbital types over countable models are determined by their finite restrictions. The study of statements of the form ``orbital types are determined by their small restrictions'' was pioneered by Grossberg and VanDieren \cite{tamenessone}, who called this condition tameness. Hyttinen and Kesälä \cite[\S3]{finitary-aec} were the first to specifically study orbital types over finite sets and the condition that they determine orbital types over countable models. 

Following the first author's terminology \cite[0.1(2)]{sh932}, we call this last condition $(<\aleph_0, \aleph_0)$-locality (not to be confused with sequence locality \cite[0.1(1)]{sh932}, which is called locality in \cite[1.8]{non-locality} or \cite[11.4]{baldwinbook09}). This is known to hold for several classes of interest:

\begin{example}\label{locality-example} \
  \begin{enumerate}
  \item Let $\K$ be a finitary AEC (see \cite{finitary-aec}; this includes classes of models of $\Ll_{\omega_1,\omega}$-sentences) and assume that $\K$ is stable in $\aleph_0$ (finitary AECs have amalgamation and no maximal models by definition). By \cite[4.11]{finitary-aec}, $\K$ is $(<\aleph_0, \aleph_0)$-local. 
  \item Finitary AECs are not the only setup where $(<\aleph_0, \aleph_0)$-locality holds. For example, it is known for quasiminimal pregeometry classes (that may not be finitary \cite[Theorem 2]{kirby-note-axiom}), see \cite[5.2]{quasimin-five}, and more generally in the finite U-rank (FUR) classes of Hyttinen and Kangas \cite[2.17]{group-config-kangas-apal} (we thank Will Boney for pointing us to that reference). 
  \end{enumerate}
\end{example}

We prove the following:

\begin{thm}\label{test-thm}
  Let $\K$ be an AEC with $\LS (\K) = \aleph_0$ and countable vocabulary. Assume that $\K$ is categorical in $\aleph_0$, $\K$ is $(<\aleph_0, \aleph_0)$-local, $\K$ has amalgamation and no maximal models in $\aleph_0$ and $\K$ is stable in $\aleph_0$. Then:

  \begin{enumerate}
  \item (Theorem \ref{good-frame-thm}) There is a good $\aleph_0$-frame on $\K_{\aleph_0}$.
  \item (Corollary \ref{sl-aleph1-cor}) There is a superlimit model of cardinality $\aleph_1$.
  \end{enumerate}
\end{thm}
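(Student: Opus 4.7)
The plan is to build the good $\aleph_0$-frame of part (1) using non-splitting over finite sets as the nonforking relation, exploiting $(<\aleph_0, \aleph_0)$-locality in an essential way. Specifically, for $M \lea N$ in $\K_{\aleph_0}$ and $p \in \oSs(N)$, declare that $p$ does not fork over $M$ when there is a finite $A \subseteq M$ such that $p$ does not split over $A$, in the sense that any two finite tuples $\bar{b}, \bar{c} \subseteq N$ with the same orbital type over $A$ satisfy that $p \rest A\bar{b}$ and $p \rest A\bar{c}$ are conjugate by an automorphism fixing $A$. Take basic types to be the non-algebraic orbital types. Stability in $\aleph_0$ supplies local character: if $p \in \oSs(M)$ split over every finite subset of $M$, a standard tree construction would produce $2^{\aleph_0}$ orbital types over $M$, a contradiction. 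Density of basic types, existence, and extension are straightforward from $\aleph_0$-stability together with amalgamation and no maximal models in $\aleph_0$.

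The $(<\aleph_0, \aleph_0)$-locality enters decisively in uniqueness of nonforking extensions: if $p, q \in \oSs(N)$ agree on $M \lea N$ and both are witnessed not to split over the same finite $A \subseteq M$, then for every finite $\bar{d} \subseteq N$ we can use amalgamation and categoricity in $\aleph_0$ to find $\bar{d}' \subseteq M$ with the same orbital type over $A$, forcing $p \rest A\bar{d} = q \rest A\bar{d}$; since this holds for every finite $\bar{d}$, locality yields $p = q$. Monotonicity and transitivity are routine. For symmetry I would use categoricity in $\aleph_0$ together with the characterization of brimmed models as orbital-homogeneous (Theorem \ref{brimmed-homog-equiv}): realize both types in the brimmed/homogeneous model and run a back-and-forth using non-splitting to swap the roles.

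For part (2), with the good $\aleph_0$-frame in hand I would obtain the superlimit in $\aleph_1$ through the saturated model. Stability in $\aleph_0$ and amalgamation give that the brimmed model $M^\ast \in \K_{\aleph_1}$ exists and is saturated; by an argument in the spirit of Corollary \ref{sl-cor} applied one level up, it suffices to show that the union $M_\omega$ of an increasing chain $\seq{M_i : i < \omega}$ of saturated models in $\K_{\aleph_1}$ is again saturated. Given a type $p \in \oSs(N)$ with $N \lea M_\omega$ countable, local character of the frame gives a countable $N_0 \lea N$, contained in some $M_i$, over which $p$ does not fork. Saturation of $M_i$ realizes the nonforking extension to $N$, and uniqueness forces this realization to realize $p$ itself; hence $M_\omega$ is saturated.

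The main obstacle will be establishing symmetry of the frame, which is notoriously the most delicate axiom in this setting. My intended route is through uniqueness of limit models in $\aleph_0$ (a consequence of $\aleph_0$-categoricity) combined with the orbital-homogeneity characterization of brimmed models; failing that, a fallback is to first verify only the pre-frame axioms, then upgrade via a coheir-style reformulation of nonforking, where symmetry is extracted directly from $(<\aleph_0, \aleph_0)$-locality through a standard ``Morley sequence swapping'' argument.
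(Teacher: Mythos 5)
Your choice of nonforking relation (nonsplitting over a finite subset of the smaller model) is exactly the paper's (Definition \ref{gs-defin}), and your treatment of local character (the binary-tree argument behind Fact \ref{splitting-thm}) and of uniqueness (weak uniqueness over brimmed models plus $(<\aleph_0,\aleph_0)$-locality, as in Lemma \ref{uq-lem}) matches the paper. But there are two genuine gaps. First, symmetry: you defer it to ``a back-and-forth using non-splitting to swap the roles'' or a ``Morley sequence swapping'' argument, but neither is worked out, and this axiom is the technical heart of the paper. The actual proof (Theorem \ref{good-frame-thm}, following I.5.30 of the AEC book) assumes symmetry fails, invokes the \emph{existence property for $\LWNF_{\s}$} --- itself a long reduced-tower-style construction occupying most of Section \ref{good-frame-sec} (Theorem \ref{weak-props}) --- then builds a $(\mathbb{Q}\cap[0,\infty))$-indexed chain of brimmed models via Scott sentences and Keisler's theorem, and extracts $2^{\aleph_0}$ types over a countable model from the cuts, contradicting stability. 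Without compactness there is no coheir calculus to fall back on, and uniqueness of limit models in $\aleph_0$ is automatic here (all cofinalities are $\omega$) so it carries no extra information toward symmetry.

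Second, part (2): your argument that the union $M_\omega$ of saturated models in $\K_{\aleph_1}$ is saturated breaks at the step ``saturation of $M_i$ realizes the nonforking extension to $N$, and uniqueness forces this realization to realize $p$ itself.'' The element $a\in M_i$ realizing $p\rest N_0$ realizes \emph{some} extension of $p\rest N_0$ to $N$, but $N$ is not contained in $M_i$, and nothing in the good-frame axioms guarantees that $\tp(a/N;M_\omega)$ does not fork over $N_0$; uniqueness only applies once that is known. Supplying this is precisely the content of the $\goodp$ property and the relation $\leap{\RWNF}$ (Definition \ref{goodp-def}, Lemma \ref{chainsat-lem}, Theorem \ref{sym-equiv}): the paper derives it from the symmetry property of $\LWNF_{\s}$, which in turn is where $(<\aleph_0,\aleph_0)$-locality for types of \emph{finite tuples} is used (Theorem \ref{sym-tameness}, identifying $\LWNF$ with $\VWNF$). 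A good $\lambda$-frame alone is not known to imply that unions of chains of saturated models in $\lambda^+$ are saturated, so this step cannot be waved through.
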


The good $\aleph_0$-frame (or the superlimit in $\aleph_1$) imply the nontrivial corollary that $\K$ has a model of cardinality $\aleph_2$ \cite[II.4.13]{shelahaecbook}. This consequence also follows from a theorem of the second author \cite[12.1]{stab-spec-v5} (which however does \emph{not} give a good $\aleph_0$-frame or a superlimit in $\aleph_1$). The conclusion that there is a superlimit model in $\aleph_1$ seems new, even for finitary AECs or FUR classes.

It is natural to ask whether the locality hypothesis in Theorem \ref{test-thm} is really needed\footnote{In fact, an earlier version of the present paper asserted that it could be derived from the other hypotheses but the argument contained a mistake.}. In fact we do not even know whether the existence of a good $\aleph_0$-frame implies $(<\aleph_0, \aleph_0)$-locality:

\begin{question}
  Let $\K$ be an AEC with $\LS (\K) = \aleph_0$. If there is a good $\aleph_0$-frame on $\K$, is $\K$ $(<\aleph_0, \aleph_0)$-local?
\end{question}

In Section \ref{supersimple-sec}, we give a partial answer: any AEC that is $\aleph_0$-stable, $\aleph_0$-categorical, and \emph{supersimple} (in a sense generalizing that of homogeneous model theory \cite{buechler-lessmann}) is $(<\aleph_0,\aleph_0)$-local. This generalizes the proof of \cite[5.2]{quasimin-five}, which shows that quasiminimal pregeometry classes are $(<\aleph_0, \aleph_0)$-local (see also \cite{quasimin-aec-v7-toappear}). Supersimple $\aleph_0$-stable AECs are also much more general than FUR classes.

\subsection{Notes}

This paper is organized as follows. Section \ref{prelim-sec} gives some background definitions and fixes the notation.  Section \ref{good-frame-sec} is a technical section on good frames (possibly on uncountable models) which sets up the machinery to prove the main theorem (more precisely, to prove a strong symmetry property for nonforking in good frames). Section \ref{sl-sec} works with countable models and shows that $\aleph_0$-stability implies the existence of a superlimit in $\aleph_0$. Section \ref{building-aleph0-frame} builds the good $\aleph_0$-frame and proves the main theorem. Finally, Section \ref{supersimple-sec} studies a sufficient condition to get $(<\aleph_0, \aleph_0)$-locality.

This paper was started while the second author was working on a Ph.D.\ thesis under the direction of Rami Grossberg at Carnegie Mellon University and he would like to thank Professor Grossberg for his guidance and assistance in his research in general and in this work specifically. We also thank Will Boney, Marcos Mazari Armida, and the referee, for their valuable comments on earlier versions of this paper.

Note that at the beginning of several sections, we make global hypotheses assumed throughout the section.

\section{Preliminaries}\label{prelim-sec}

We assume familiarity with the basics of AECs, as presented for example in \cite{grossberg2002, baldwinbook09}, or the first three sections of Chapter I together with the first section of Chapter II in \cite{shelahaecbook}. We also assume familiarity with good frames (see \cite[Chapter II]{shelahaecbook} or \cite{tame-frames-revisited-jsl}; it would help the reader to have a copy of both available during the reading of Section \ref{good-frame-sec}). This section mostly fixes the notation that we will use.

Given a $\tau$-structure $M$, we write $|M|$ for its universe and $\|M\|$ for its cardinality. We may abuse notation and write e.g\ $a \in M$ instead of $a \in |M|$. We may even write $\ba \in M$ instead of $\ba \in \fct{<\omega}{|M|}$.

We write $\K = (K, \lea)$ for an AEC. We may abuse notation and write $M \in \K$ instead of $M \in K$. For a cardinal $\lambda$, we write $\K_\lambda$ for the AEC restricted to its models of size $\lambda$. As shown in \cite[II.1]{shelahaecbook}, any AEC is uniquely determined by its restriction $\K_{\le \LS (\K)}$.

When we say that $M \in \K$ is an \emph{amalgamation base}, we mean (as in \cite{shvi635}) that it is an amalgamation base in $\K_{\|M\|}$, i.e.\ we do \emph{not} require that larger models can be amalgamated.

For $M_0 \in \K$, we say that $M$ is \emph{universal over $M_0$} if $M_0 \lea M$ and for any $N \in \K$ with $M_0 \lea N$, if $\|N\| \le \|M_0\| + \LS (\K)$, there exists $f: N \xrightarrow[M_0]{} M$ (usually we will require also that $\|M\| = \|M_0\|$). We say that $M$ is \emph{$(\lambda, \delta)$-brimmed over $M_0$} (often also called $(\lambda, \delta)$-limit e.g.\ in \cite{shvi635, gvv-mlq}) if $\delta < \lambda^+$ is a limit ordinal, $M_0 \in \K_\lambda$, and there exists an increasing continuous chain $\seq{N_i : i \le \delta}$ of members of $\K_{\lambda}$ such that $N_0$ is universal over $M_0$, $N_\delta = M$, and $N_{i + 1}$ is universal over $N_i$ for all $i < \delta$. We say that $M$ is \emph{brimmed over $M_0$} if it is $(\|M\|, \delta)$-brimmed over $M_0$ for some limit $\delta < \|M\|^+$. We say that $M$ is \emph{brimmed} if it is brimmed over some $M_0$.

The following key concept appears in \cite[I.3.3]{shelahaecbook}:

\begin{defin}\label{sl-def}
  We say that $M \in \K$ is \emph{superlimit} if, letting $\lambda := \|M\|$, we have that $\lambda \ge \LS (\K)$, $M$ is universal in $\K_{\lambda}$ (i.e.\ any $M' \in \K_{\lambda}$ embeds into $M$), $M$ is not maximal, and whenever $\delta < \lambda^+$ is limit, $\seq{M_i : i < \delta}$ is increasing with $M_i \cong M$ for all $i < \delta$, then $\bigcup_{i < \delta} M_i \cong M$.
\end{defin}

The following notion of types already appears in \cite{sh300-orig}. It is called Galois types by many, but we prefer the term \emph{orbital types} here. They are the same types that are defined in \cite[II.1.9]{shelahaecbook}, but we also define them over sets. As pointed out in \cite[Section 2]{sv-infinitary-stability-afml}, this causes no additional difficulties. The following technical point is important: when the AEC does \emph{not} have amalgamation, we may want to compute orbital types only in the subclass of \emph{amalgamation bases} in $\K$ (as in \cite{shvi635}). Thus we allow orbital types to be computed in a subclass of $\K$ in the definition.

\begin{defin}\label{7r.13A}
  Fix an AEC $\K$ and a subclass $\K^\ast$ of $\K$, closed under isomorphisms.

  \begin{enumerate}
  \item We say $(A, N_1, \bb_1) \Eat^{\K^\ast} (A, N_2, \bb_2)$ if:
    \begin{enumerate}
    \item For $\ell = 1,2$, $N_\ell \in \K^\ast$, $A \subseteq |N_\ell|$, and $\bb_\ell \in \fct{<\infty}{|N_\ell|}$.
    \item There exists $N \in \K^\ast$ and $f_\ell : N_\ell \xrightarrow[A]{} N$, $\ell = 1,2$, such that $f_1 (\bb_1) = \bb_2$.
    \end{enumerate}
  \item $\Eat^{\K^\ast}$ is a reflexive and symmetric relation.  Let $E^{\K^\ast}$ be its transitive closure.
  \item Let $\otp_{\K^\ast} (\bb / A; N)$ be the $E^{\K^{\ast}}$-equivalence class of $(\bb, A, N)$.
  \item\label{13A-4} Define $\oSp{\K^\ast} (A, N)$, $\oSp{\K^\ast} (M)$, $\oSp{\K^\ast}^{<\omega} (M)$, $\oSp{\K^\ast}^{<\omega} (\emptyset)$, etc.\ for the Stone spaces of orbital types, \emph{computed inside $\K^\ast$}. This is defined as in \cite[2.20]{sv-infinitary-stability-afml}. For example, $\oSp{\K^\ast}^{<\omega} (\emptyset) = \{\otp_{\K^\ast} (\bb / \emptyset; N) \mid N \in \K^\ast, \bb \in \fct{<\omega}{N}\}$. 
  \item When $\K^\ast = \K$, we omit it.
  \end{enumerate}
\end{defin}

Let us say that an AEC $\K$ is \emph{stable in $\lambda$} if for any $M \in \K_\lambda$, $|\oS (M)| \le \lambda$. This makes sense in any AEC, and is quite well-behaved assuming amalgamation and no maximal models (since then it is known that one can build universal extensions). We will often work in the following axiomatic setup, a slight weakening where full amalgamation is not assumed. This comes from the context derived in \cite{shvi635}:

\begin{defin}\label{nicely-stable-def}
  Let $\K$ be an AEC and let $\lambda$ be a cardinal. We say that $\K$ is \emph{nicely stable in $\lambda$} (or \emph{nicely $\lambda$-stable}) if:

  \begin{enumerate}
  \item $\LS (\K) \le \lambda$.
  \item $\K_\lambda \neq \emptyset$.
  \item $\K$ has joint embedding in $\lambda$.
  \item Density of amalgamation bases: For any $M \in \K_\lambda$, there exists $N \in \K_\lambda$ such that $M \lea N$ and $N$ is an amalgamation base (in $\K_\lambda$).
  \item Existence of universal extensions: For any amalgamation base $M \in \K_\lambda$, there exists an amalgamation base $N \in \K_\lambda$ such that $M \lta N$ and $N$ is universal over $M$.
  \item Any brimmed model in $\K_\lambda$ is an amalgamation base.
  \end{enumerate}

  We say that $\K$ is \emph{very nicely stable in $\lambda$} if in addition it has amalgamation in $\lambda$. 
\end{defin}
\begin{remark}\label{vnice-rmk}
  An AEC $\K$ is very nicely stable in $\lambda$ if and only if $\LS (\K) \le \lambda$, $\K_{\lambda} \neq \emptyset$, $\K$ is stable in $\lambda$, and $\K_\lambda$ has amalgamation, joint embedding, and no maximal models. In particular, stability is a consequence of the existence of universal extensions in Definition \ref{nicely-stable-def}.
\end{remark}

We will repeatedly use the following fact \cite[1.3.6]{shvi635}. 

\begin{fact}\label{brimmed-uq}
  Let $\K$ be nicely stable in $\lambda$ and let $M_0, M_1, M_2 \in \K_{\lambda}$. Let $\delta_1, \delta_2 < \lambda^+$ be limit ordinals such that $\cf{\delta_1} = \cf{\delta_2}$.

  \begin{enumerate}
  \item If $M_\ell$ is $(\lambda, \delta_\ell)$-brimmed over $M_0$, for $\ell = 1,2$, then $M_1 \cong_{M_0} M_2$.
  \item If $M_\ell$ is $(\lambda, \delta_\ell)$-brimmed, for $\ell = 1,2$, then $M_1 \cong M_2$.
  \end{enumerate}
\end{fact}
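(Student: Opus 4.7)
The plan is a standard back-and-forth construction. For (1), I first reduce to the case $\delta_1 = \delta_2 = \theta$, where $\theta := \cf{\delta_1} = \cf{\delta_2}$. This uses the observation that if $\seq{N_i : i \le \delta}$ witnesses that $M = N_\delta$ is $(\lambda, \delta)$-brimmed over $M_0$, and $S \subseteq \delta$ is cofinal of order type $\theta$, then the reindexed subchain $\seq{N_i : i \in S}$ witnesses $(\lambda, \theta)$-brimmedness of $M$ over $M_0$. The key point is that for $i < j$ in $S$, the model $N_j$ is universal over $N_i$, since $N_{i+1} \lea N_j$ and $N_{i+1}$ is universal over $N_i$.

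Next I would run the back-and-forth for $\delta_1 = \delta_2 = \theta$. Fix witnessing chains $\seq{N_i^\ell : i \le \theta}$ for $\ell = 1, 2$, extending by $N_{-1}^\ell := M_0$. By induction on $i \le \theta$, build strictly increasing continuous sequences $\seq{\alpha_i^\ell : i \le \theta}$ that are cofinal in $\theta$, with $\alpha_0^\ell = -1$, along with isomorphisms $f_i : N_{\alpha_i^1}^1 \cong N_{\alpha_i^2}^2$ over $M_0$, increasing continuous in $i$, starting from $f_0 = \id_{M_0}$. At a successor step, alternate between the two sides: to force $N_{\alpha_i^1 + 1}^1$ into the domain, transport it through the iso $f_i$ to produce an extension $N^* \gea N_{\alpha_i^2}^2$ of size $\lambda$, then invoke the universality of $N_{\alpha_i^2 + 1}^2$ over $N_{\alpha_i^2}^2$ to embed $N^*$ into $N_{\alpha_i^2 + 1}^2$ over $N_{\alpha_i^2}^2$. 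Pairing this with the symmetric move in the opposite direction produces a genuine isomorphism $f_{i+1}$. Brimmed models being amalgamation bases (a clause of ``nicely stable'') legitimizes the pushouts. At limit $i$ take unions; at $i = \theta$, cofinality yields $\alpha_\theta^\ell = \theta$, and $f_\theta$ is the desired iso $M_1 \cong_{M_0} M_2$.

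For (2), I deduce this from (1). Suppose $M_\ell$ is $(\lambda, \delta_\ell)$-brimmed over $M_0^\ell$. By joint embedding in $\K_\lambda$ and density of amalgamation bases, choose an amalgamation base $N \in \K_\lambda$ with $M_0^\ell \lea N$ for $\ell = 1, 2$. Using that $N_0^\ell$ is universal over $M_0^\ell$, embed $N$ into $N_0^\ell$ over $M_0^\ell$ and identify $N$ with its image. I then verify that $N_1^\ell$ is universal over $N$: given any $N' \gea N$ in $\K_\lambda$, amalgamate $N'$ with $N_0^\ell$ over the amalgamation base $N$ to form $N'' \gea N_0^\ell$, and embed $N''$ into $N_1^\ell$ over $N_0^\ell$ by universality; the composition sends $N'$ into $N_1^\ell$ over $N$. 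Hence $\seq{N_i^\ell : 1 \le i \le \delta_\ell}$ witnesses $(\lambda, \delta_\ell')$-brimmedness of $M_\ell$ over $N$, for a trivially shifted $\delta_\ell'$ of the same cofinality. Part (1) applied to $N$ in place of $M_0$ gives $M_1 \cong_N M_2$, hence $M_1 \cong M_2$.

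The main obstacle is the bookkeeping in (1): maintaining that each $f_i$ is an isomorphism onto a full term of the second chain (rather than a mere embedding into it) is what forces the alternation pattern and the use of brimmed models as amalgamation bases. The secondary subtlety is the check in (2) that $N_1^\ell$ is universal over the enlarged base $N$ (and not only over $M_0^\ell$), which depends crucially on $N$ being an amalgamation base.
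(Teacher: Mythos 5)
Your proof is correct and is exactly the argument the paper has in mind: the paper dismisses (1) as ``a straightforward back and forth argument'' and derives (2) from (1) via joint embedding, precisely as you do. The only refinements worth recording are that the cofinal set $S$ should be taken \emph{closed} so that the reindexed subchain remains continuous, and that a single forward/backward pair yields only an embedding whose domain (resp.\ range) contains a full term of the respective chain, so one either iterates $\omega$ many alternations per successor step (using continuity of the chains) or relaxes the invariant to embeddings whose domains and ranges are cofinally often full terms; neither point affects the substance, and note that part (1) in fact needs only universality and renaming, not amalgamation.
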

\begin{proof}
  The first is a straightforward back and forth argument and the second follows from the first using joint embedding.
\end{proof}
\begin{remark}\label{brimmed-uq-rmk}
  Uniqueness of brimmed models when $\cf{\delta_1} \neq \cf{\delta_2}$ is a much harder property to establish, akin to superstability. See for example \cite{shvi635, vandierennomax, gvv-mlq, vandieren-symmetry-apal}. However when $\lambda = \aleph_0$ we automatically have that $\cf{\delta_1} = \cf{\delta_2} = \omega$. 
\end{remark}

Good frames were first defined by the first author in his paper number 600, which eventually appeared as Chapter II of \cite{shelahaecbook}. The idea is to provide a localized (i.e.\ only for base models of a given size $\lambda$) axiomatization of a forking-like notion for a ``nice enough'' set of 1-types.  These axioms are similar to the properties of first-order forking in a superstable theory. Jarden and the first author (in \cite{jrsh875}) later gave a slightly more general definition, not assuming the existence of a superlimit model and dropping some of the redundant clauses. We will make use of good frames for types of finite length (not just length one). Their definition is just like for types of length one, we call them \emph{good $(<\omega, \lambda)$-frames}. For the convenience of the reader, we give the full definition from \cite[3.8]{tame-frames-revisited-jsl} here:

\begin{defin}\label{good-frame-def}
  Let $\lambda$ be an infinite cardinal. A \emph{good $(<\omega, \lambda)$-frame} is a triple $(\K, \nf, \Sbs)$ satisfying, where:

  \begin{enumerate}
  \item $\K$ is an abstract elementary class with $\lambda \ge \LS (\K)$, $\K_\lambda \neq \emptyset$.
  \item $\Sbs \subseteq \bigcup_{M \in \K_{\lambda}} \Ss^{<\omega} (M)$. Moreover, if $\otp (\seq{a_i : i < n} / M; N) \in \Sbs (M)$, then $a_i \notin M$ for all $i < n$.
  \item $\nf$ is a relation on quadruples of the form $(M_0, M_1, \ba, N)$, where $M_0 \lea M_1 \lea N$, $\ba \in \fct{<\omega}{N}$, and $M_0$, $M_1$, $N$ are all in $K_{\lambda}$. We write $\nf(M_0, M_1, \ba, N)$ or $\nfs{M_0}{\ba}{M_1}{N}$ instead of $(M_0, M_1, a, N) \in \nf$. 
  \item The following properties hold:
    \begin{enumerate}
      \item \underline{Invariance}: If $f: N \cong N'$ and $\nfs{M_0}{\ba}{M_1}{N}$, then $\nfs{f[M_0]}{f(\ba)}{f[M_1]}{N'}$. If $\tp (\ba / M_1; N) \in \Sbs (M_1)$, then $\tp(f (\ba) / f[M_1]; N') \in \Sbs (f[M_1])$.
      \item \underline{Monotonicity}: If $\nfs{M_0}{\ba}{M_1}{N}$, $\ba'$ is a subsequence of $\ba$, $M_0 \lea M_0' \lea M_1' \lea M_1 \lea N' \lea N \lea N''$ with $\ba' \in N'$, and $N'' \in K_{\F}$, then $\nfs{M_0'}{\ba'}{M_1'}{N'}$ and $\nfs{M_0'}{\ba'}{M_1'}{N''}$. If $\tp (\ba / M_1; N) \in \Sbs (M_1)$ and $\ba'$ is a subsequence of $\ba$, then $\tp (\ba' / M_1; N) \in \Sbs (M_1)$. [This property and the previous one show that $\nf$ is really a relation on types. Thus if $p \in \Ss^{<\omega} (M_1)$ is a type, we say $p$ \emph{does not fork over $M_0$} if $\nfs{M_0}{\ba}{M_1}{N}$ for some (equivalently any) $\ba$ and $N$ such that $p = \tp (\ba / M_1; N)$. Note that this depends on $\s$, but $\s$ will always be clear from context.]
      \item \underline{Nonforking types are basic}: If $\nfs{M}{\ba}{M}{N}$, then $\tp (\ba / M; N) \in \Sbs (M)$.
        
      \item $K_{\lambda}$ has amalgamation, joint embedding, and no maximal models.
      \item \underline{bs-Stability}: $|\Sbs (M)| \le \|M\|$ for all $M \in K_{\lambda}$.
      \item \underline{Density of basic types}: If $M \lta N$ are in $K_{\lambda}$, then there is $a \in N$ such that $\tp (a / M; N) \in \Sbs (M)$.
  \item \underline{Existence of nonforking extension}: If $m \le n < \omega$, $p \in \Sbs (M) \cap \Ss^m (M)$, $N \gea M$ is in $K_{\lambda}$, then there is some $q \in \Sbs(N) \cap \Ss^n (M)$ that does not fork over $M$ and extends $p$.
  \item \underline{Uniqueness}: If $p, q \in \Ss^{<\omega} (N)$ do not fork over $M$ and $p \upharpoonright M = q \upharpoonright M$, then $p = q$.
  \item \underline{Symmetry}: If $\nfs{M_0}{\ba_1}{M_2}{N}$, $\ba_2 \in \fct{<\alpha}{M_2}$, and $\tp (\ba_2 / M_0; N) \in \Sbs (M_0)$, then there is $M_1$ containing $\ba_1$ and $N' \gea N$ such that $\nfs{M_0}{\ba_2}{M_1}{N'}$.
  \item \underline{Local character}: If $\delta$ is a regular cardinal, $\seq{M_i \in K_{\lambda} : i \le \delta}$ is increasing continuous, and $p \in \Sbs (M_\delta)$ is such that $\ell (p) < \delta$, then there exists $i < \delta$ such that $p$ does not fork over $M_i$.
  \item \underline{Continuity}: If $\delta$ is a limit ordinal, $\seq{M_i \in K_{\lambda}: i \leq \delta}$ and $\seq{\alpha_i < \alpha : i \leq \delta}$ are increasing and continuous, and $p_i \in \Sbs (M_i)$ for $i < \delta$ are such that $j < i < \delta$ implies $p_j = p_i \rest M_j$, then there is some $p \in \Sbs (M_\delta)$ such that for all $i < \delta$, $p_i = p \rest M_i$.  Moreover, if each $p_i$ does not fork over $M_0$, then neither does $p$.
  \item \underline{Transitivity}: If $M_0 \lea M_1 \lea M_2$, $p \in \Ss (M_2)$ does not fork over $M_1$ and $p \upharpoonright M_1$ does not fork over $M_0$, then $p$ does not fork over $M_0$.
    \end{enumerate}
\end{enumerate}
    A \emph{good $\lambda$-frame} is defined similarly, except we require all types to be types of singletons (i.e.\ they are in $\Ss (M)$ instead of $\Ss^{<\omega} (M)$). We say that an AEC $\K$ \emph{has a good $(<\omega, \lambda)$-frame} if there is a good $(<\omega, \F)$-frame where $K$ is the underlying AEC.
  \end{defin}
  \begin{notation}
    If $\s = (\K, \nf, \Sbs)$ is a good-$(<\omega, \lambda)$-frame (or a good $\lambda$-frame), write $\nf_{\s} := \nf$. Also write $\K^{\s}$ for $\K$ and $\K_{\s} = \K_\lambda$. We will also write $M \leas N$ as a shortcut for $M \lea N$ and $M, N \in \K_{\s}$ ($= \K_\lambda$).
  \end{notation}

\begin{remark}
  The reader might wonder about the reasons for having a special class of basic types. Following \cite[Definition III.9.2]{shelahaecbook}, let us call a good frame \emph{type-full} if the basic types are all the nonalgebraic types. There are no known examples of a good $\lambda$-frame which which cannot be extended to a type-full one. However a construction of good frames of the first author \cite[II.3]{shelahaecbook} builds a non type-full good frame and it is not clear that it can be extended to a type-full one until a lot more machinery has been developed. Thus it can be easier to build a good frame than to build a type-full one, and most results about frames already hold in the non-type-full context. That being said, readers would not miss the essence of the present paper if they assumed that all the frames here were type-full.
\end{remark}
\begin{remark}\label{indep-seq-rmk}
  Any good $\lambda$-frame (i.e.\ for types of length one) extends to a good $(<\omega, \lambda)$-frame (using independent sequences, see \cite[III.9.4]{shelahaecbook}) or \cite[5.8]{tame-frames-revisited-jsl}. This frame will however \emph{not} be type-full.
\end{remark}

From now on until the end of Section \ref{building-aleph0-frame}, ``nonforking'' will refer to nonforking in a fixed frame $\s$ (usually clear from context).

\section{Weak nonforking amalgamation}\label{good-frame-sec}

In this section, we work in a good $\lambda$-frame and study a natural weak version of nonforking amalgamation, $\LWNF_{\s}$ ($\LWNF$ stands for ``left weak nonforking amalgamation''). The goal is to obtain a natural criteria for proving the existence of a superlimit in $\aleph_1$ and also prepare the ground for the proof of symmetry in the good frame built in Section \ref{building-aleph0-frame}. The main results are the existence property (Theorem \ref{weak-props}) and how the symmetry property of $\LWNF_{\s}$ is connected to $\s$ being $\goodp$ (Theorem \ref{sym-equiv}). Throughout this section, we assume:

\begin{hypothesis}\label{global-hyp-3} \
  \begin{enumerate}
  \item $\s = (\K, \nf, \Sbs)$ is a fixed good $(<\omega, \lambda)$-frame, except that it may not satisfy the symmetry axiom.
  \item $\K$ is categorical in $\lambda$.
  \end{enumerate}
\end{hypothesis}
\begin{remark}
  In this section, $\lambda$ is allowed to be uncountable. However the case $\lambda = \aleph_0$ is the one that will interest us in the next sections.
\end{remark}

The reason for not assuming symmetry is that we will use some of the results of this section to \emph{prove} that the symmetry axiom holds of a certain nonforking relation in Section \ref{building-aleph0-frame}.

We will use:

\begin{fact}[II.4.3 in \cite{shelahaecbook}]\label{brimmed-fact}
  Let $\delta < \lambda^+$ be a limit ordinal divisible by $\lambda$. Let $\seq{M_i : i \le \delta}$ be increasing continuous in $\K_{\s}$. If for any $i < \delta$ and any $p \in \oSsbs (M_i)$, there exists $\lambda$-many $j \in [i, \delta)$ such that the nonforking extension of $p$ to $M_j$ is realized in $M_{j + 1}$, then $M_\delta$ is brimmed over $M_0$.
\end{fact}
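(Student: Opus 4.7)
The plan is to exhibit an increasing continuous chain inside $\langle M_i : i \le \delta\rangle$ witnessing the brimmed property. Since $\delta$ is divisible by $\lambda$, write $\delta = \lambda \cdot \delta^\ast$ for some limit ordinal $\delta^\ast < \lambda^+$. The natural candidate subchain is $N_j := M_{\lambda \cdot j}$ for $j \le \delta^\ast$. This is increasing continuous in $\K_{\s}$ with $N_0 = M_0$ and $N_{\delta^\ast} = M_\delta$, so by Fact \ref{brimmed-uq} it suffices to show that $N_{j+1}$ is universal over $N_j$ for each $j < \delta^\ast$ (and to absorb a single universal extension $N_0 \lta N_0'$ at the bottom into a reindexing, to guarantee the base step of the brimmed definition).

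For the universality step, fix $j < \delta^\ast$ and let $M^\ast \in \K_{\s}$ with $N_j \leas M^\ast$; I want to build an embedding $f: M^\ast \xrightarrow[N_j]{} N_{j+1}$. Filter $M^\ast$ as $\seq{M^\ast_\gamma : \gamma < \lambda}$ increasing continuous with $M^\ast_0 = N_j$, $|M^\ast_{\gamma+1} \setminus M^\ast_\gamma| \neq \emptyset$, and $\bigcup_{\gamma < \lambda} M^\ast_\gamma = M^\ast$. By density of basic types, at each successor step pick $b_\gamma \in M^\ast_{\gamma+1}$ with $\otp(b_\gamma / M^\ast_\gamma; M^\ast) \in \Sbs(M^\ast_\gamma)$. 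Build in parallel an increasing sequence of ordinals $\lambda \cdot j \le i_0 < i_1 < \ldots < \lambda \cdot (j+1)$ and coherent embeddings $f_\gamma : M^\ast_\gamma \to M_{i_\gamma}$ over $N_j$, so that at successor stages the image type $f_\gamma(\otp(b_\gamma/M^\ast_\gamma; M^\ast))$ is realized in $M_{i_{\gamma+1}}$ by an element lying in the image of $f_{\gamma+1}$. The hypothesis gives, for each basic type over $f_\gamma[M^\ast_\gamma]$ (which, using monotonicity, may be viewed as a nonforking extension of a basic type over $N_j$ whenever needed; if it is not, one replaces it by its nonforking extension and argues via the extension axiom), $\lambda$-many indices in $[\lambda \cdot j,\, \delta)$ where its nonforking extension is realized; cofinally many such indices must lie below $\lambda \cdot (j+1)$ once we enumerate carefully, because bs-stability bounds the number of types to be handled by $\lambda$ and we only need to beat an ordinal of cofinality $\le \lambda$ at limit stages. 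At limit $\gamma$, take unions and use continuity.

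The main obstacle is precisely this bookkeeping: ensuring that the chosen indices $i_\gamma$ remain strictly below $\lambda \cdot (j+1)$ while still realizing, at each stage, the relevant nonforking extensions, and ensuring coherence of the maps $f_\gamma$ at limits. The first point is handled by stability together with the ``$\lambda$-many realizations'' clause of the hypothesis, which allows a diagonal enumeration covering all basic types over all the $f_\gamma[M^\ast_\gamma]$; the second is handled by uniqueness of nonforking extensions in $\s$ together with the invariance and monotonicity axioms, which guarantee that the image of a basic type under $f_\gamma$ agrees with the nonforking extension of an already-handled type, forcing the back-and-forth matches to be compatible.
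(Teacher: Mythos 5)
First, a point of comparison: the paper does not prove this statement at all; it is quoted as a black box from \cite[II.4.3]{shelahaecbook}. So your proposal can only be judged on its own terms, and it has two genuine gaps. The first is the fixed block decomposition. You set $N_j := M_{\lambda\cdot j}$ and justify universality of $N_{j+1}$ over $N_j$ by asserting that ``cofinally many'' of the realization indices for a basic type over a model in block $j$ ``must lie below $\lambda\cdot(j+1)$.'' The hypothesis provides no such localization: for $p \in \oSsbs(M_i)$ it only gives a set of cardinality $\lambda$ of indices somewhere in $[i,\delta)$, and this set may be entirely contained in $[\lambda\cdot(j+1),\delta)$ --- for instance, all realizations of types over models in block $j$ could occur in block $j+5$, and the cutoff could grow with $j$. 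No enumeration, and no appeal to bs-stability, can repair this, because it is a constraint the hypothesis simply does not impose; hence $M_{\lambda\cdot(j+1)}$ need not be universal over $M_{\lambda\cdot j}$, and the witnessing subchain cannot be fixed in advance but must be chosen adaptively, interleaved with the embedding construction. (A smaller issue of the same flavor: divisibility of $\delta$ by $\lambda$ does not make $\delta^\ast$ a limit ordinal --- $\delta=\lambda$ is allowed --- so your subchain need not even have limit length.)

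The second gap is in the successor step of your universality argument. Realizing the nonforking extension of $f_\gamma(\otp(b_\gamma/M^\ast_\gamma; M^\ast))$ in $M_{i_{\gamma+1}}$ produces an element $c\in M_{i_{\gamma+1}}$ with the correct orbital type over $f_\gamma[M^\ast_\gamma]$, but the map witnessing this equality of orbital types carries $M^\ast_{\gamma+1}$ into some \emph{extension} of $M_{i_{\gamma+1}}$, not into $M_{i_{\gamma+1}}$ itself; since there is no ``model generated by $f_\gamma[M^\ast_\gamma]\cup\{c\}$'' in an AEC, nothing places the remainder of $M^\ast_{\gamma+1}$ inside the chain, and the coherence of the maps $f_\gamma$ breaks down --- uniqueness of nonforking extensions does not address this. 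The standard proof avoids both problems by running a back-and-forth between $\seq{M_i : i\le\delta}$ and an abstractly constructed chain $\seq{N_\gamma : \gamma \le \lambda}$ with $N_0=M_0$ and $N_{\gamma+1}$ universal over $N_\gamma$: the steps going \emph{into} the $N$-chain use that universality to absorb the ambient amalgams produced by the one-element realization steps, while the steps going into the $M$-chain use the realization hypothesis with indices chosen on the fly. One concludes $M_\delta\cong_{M_0}N_\lambda$, and $N_\lambda$ is brimmed over $M_0$ by construction, so one finishes with Fact \ref{brimmed-uq} (invariance of brimmedness under isomorphism over the base) rather than by exhibiting the witnessing chain inside $\seq{M_i : i \le \delta}$ itself.
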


To understand the definition below, it may be helpful to think of $\s$ as type-full. Then $\LWNF_{\s} (M_0, M_1, M_2, M_3)$ holds if and only if the type of any finite subsequences of $M_1$ over $M_2$ does not fork over $M_0$ ($M_3$ is the ambient model). Thus $\LWNF_{\s}$ is an attempt to extend nonforking to types of infinite sequences so that it keeps a strong finite character property. In the present paper, $\LWNF_{\s}$ will be a helpful technical tool but it is not clear that it has the uniqueness property (in contrast with the relation $\NF$ from \cite[\S II.6]{shelahaecbook} or \cite[\S5]{jrsh875}, which will have the uniqueness property but requires more assumptions on the good frame). If $\LWNF_{\s}$ \emph{does} have the uniqueness property, this has strong consequence on the structure of the frame, see Theorem \ref{uq-consequence}.

\begin{defin}\label{wnf-def}
  Define the following $4$-ary relations on $\K_{\s}$:

  \begin{enumerate}
  \item $\LWNF_{\s} (M_0, M_1, M_2, M_3)$ if and only if $M_0 \leas M_\ell \leas M_3$ for $\ell = 1,2$ and for any $\bb \in \fct{<\omega}{|M_1|}$, if $\tp (\bb, M_2, M_3)$ and $\tp (\bb, M_0, M_3)$ are basic then $\tp (\bb, M_2, M_3)$ does not fork over $M_0$.
  \item $\RWNF_{\s} (M_0, M_1, M_2, M_3)$ if and only if $\LWNF_{\s} (M_0, M_2, M_1, M_3)$ [$\RWNF$ stands for ``right weak nonforking amalgamation''].
  \item $\WNF_{\s} (M_0, M_1, M_2, M_3)$ if and only if both $\LWNF_{\s} (M_0, M_1, M_2, M_3)$ and $\RWNF_{\s} (M_0, M_1, M_2, M_3)$ [$\WNF$ stands for ``weak nonforking amalgamation].
  \end{enumerate}

  When $\s$ is clear from context, we write $\LWNF$, $\RWNF$, and $\WNF$.
\end{defin}

The following result often comes in handy.

\begin{lem}\label{union-realize}
  Let $\delta < \lambda^{+}$ be a limit ordinal. Let $\seq{M_i : i \le \delta}$, $\seq{N_i : i \le \delta}$ be increasing continuous in $\K_{\s}$. Assume that for each $i \le j < \delta$, we have that $\LWNF (M_i, N_i, M_j, N_j)$. If for each $i < \delta$, $N_i$ realizes all the basic types over $M_i$, then $N_\delta$ realizes all the basic types over $M_\delta$.
\end{lem}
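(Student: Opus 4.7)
My plan is to apply local character to reduce to a nonforking restriction of $p$ over a small model $M_{i_0}$, pick a realization there, and trace it up along the chain using $\LWNF$. Concretely, given $p \in \Sbs(M_\delta)$, local character (applied to the continuous chain $\seq{M_i : i \leq \delta}$, noting that $p$ has finite length) provides some $i_0 < \delta$ with $p$ nonforking over $M_{i_0}$. For $j \in [i_0, \delta]$, write $p_j := p \rest M_j$; by base monotonicity of $\nf$ together with ``nonforking types are basic'', each $p_j$ belongs to $\Sbs(M_j)$ and does not fork over $M_{i_0}$. By the realization hypothesis applied to $N_{i_0}$ and $p_{i_0}$, pick $b \in N_{i_0}$ realizing $p_{i_0}$. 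The claim reduces to showing that $b$ realizes $p$ in $N_\delta$.

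To that end, I would prove by induction on $j \in [i_0, \delta]$ that $\tp(b/M_j; N_j) = p_j$; the case $j = \delta$ yields the conclusion. The base case $j = i_0$ is by construction. For the successor step from $j$ to $j+1 < \delta$: invariance of the ambient model gives $\tp(b/M_j; N_{j+1}) = p_j$, which is basic; then $\LWNF(M_j, N_j, M_{j+1}, N_{j+1})$, applied to the finite tuple $b \in N_j$, forces $\tp(b/M_{j+1}; N_{j+1})$---once it is seen to be basic---not to fork over $M_j$, and uniqueness of basic nonforking extensions identifies it with $p_{j+1}$. For the limit step at $j$ (including $j = \delta$), the inductive hypothesis provides a coherent chain $\seq{p_{j'} : j' < j}$ of basic types not forking over $M_{i_0}$; the continuity axiom then yields a basic $p^* \in \Sbs(M_j)$ extending this chain and still not forking over $M_{i_0}$, and uniqueness forces $p^* = p_j$. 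Moreover, by invariance of the ambient model, $\tp(b/M_j; N_j)$ restricts on each $M_{j'}$ to $p_{j'}$, so---once known to be basic and nonforking over $M_{i_0}$---uniqueness forces it to equal $p_j$ as well.

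The main obstacle, which appears at both the successor and limit steps, is verifying that $\tp(b/M_j; N_j)$ is basic, since basicness need not be preserved under extension of the base model in a non-type-full frame. To handle the successor step, I would combine (i) the realization hypothesis, which provides $b' \in N_{j+1}$ realizing $p_{j+1}$ against which $b$ can be compared (both realize the same basic type $p_j$ over $M_j$), and (ii) the $\LWNF$ structure, which forces $b$'s type to align with the canonical nonforking extension produced by the existence axiom---this nonforking extension is basic by ``nonforking types are basic'', so one concludes $\tp(b/M_{j+1}; N_{j+1}) = p_{j+1}$ is basic. In the type-full case, the argument simplifies dramatically: basicness amounts to $b \notin M_j$, which can be arranged at the outset by choosing $b \in N_{i_0}$ realizing $p_{i_0}$ outside $M_\delta$, with the $\LWNF$-induced disjointness making such a choice available. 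Once basicness is handled, the remainder of the proof consists only of the standard uniqueness/transitivity moves of nonforking in the good frame.
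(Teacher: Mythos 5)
Your proposal has the same skeleton as the paper's proof: apply local character to find $i_0 < \delta$ over which $p$ does not fork, realize $p \rest M_{i_0}$ by some $b \in N_{i_0}$, use $\LWNF (M_{i_0}, N_{i_0}, M_j, N_j)$ to see that $\tp (b, M_j, N_j)$ does not fork over $M_{i_0}$ for each $j \in [i_0, \delta)$, and conclude by continuity and uniqueness that $\tp (b, M_\delta, N_\delta) = p$. The paper compresses your transfinite induction into a single appeal to the continuity axiom, but the content is identical.

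The one place where you go beyond the paper --- the attempt to verify that $\tp (b, M_j, N_j)$ is basic at each stage --- is also the place where your argument does not hold up. The successor step is circular: the hypothesis of $\LWNF (M_j, N_j, M_{j+1}, N_{j+1})$ already requires $\tp (b, M_{j+1}, N_{j+1})$ to be basic before it yields any nonforking conclusion, so $\LWNF$ cannot be used to ``force'' that type to align with the (basic) nonforking extension of $p \rest M_j$; and the auxiliary realization $b' \in N_{j+1}$ of $p \rest M_{j+1}$ only shares with $b$ the type $p \rest M_j$ over $M_j$, which does not identify their types over $M_{j+1}$. Likewise, in your type-full reduction, $\LWNF$ does not supply the disjointness $N_{i_0} \cap M_j \subseteq M_{i_0}$ you invoke: its hypothesis fails for algebraic types, so it says nothing about elements of $N_{i_0}$ that happen to land in $M_j$. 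That said, the paper's own three-line proof simply treats basicness of $\tp (b, M_j, N_j)$ as implicit and does not argue for it either (and in the type-full frames to which the lemma is ultimately applied the issue reduces to nonalgebraicity). So your proposal does not diverge from the paper in approach; it only makes visible, without actually resolving, a point the paper leaves tacit.
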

\begin{proof}
  Let $p \in \oSsbs (M_\delta)$. By local character, there exists $i < \delta$ such that $p$ does not fork over $M_i$. In particular, $p \rest M_i$ is basic. Since $N_i$ realizes all the basic types over $M_i$, there exists $a \in |N_i|$ such that $p \rest M_i = \tp (a, M_i, N_i)$. Because for all $j \in [i, \delta)$, $\LWNF (M_i, N_i, M_j, N_j)$, we have by continuity that $\tp (a, M_\delta, N_\delta)$ does not fork over $M_i$, hence by uniqueness it must be equal to $p$. Therefore $a$ realizes $p$, as needed.
\end{proof}

Next, we investigate the properties of $\LWNF$. We are especially interested in the symmetry property: whether $\LWNF$ is equal to $\RWNF$. To understand it better, we consider the following ordering, defined similarly to $\le_{\lambda^+}^\ast$ from \cite[II.7.2]{shelahaecbook}:

\begin{defin}\label{leanf-def}
  For $R \in \{\LWNF, \RWNF, \WNF\}$, define a relation $\leap{R}$ on $\K_{\lambda^+}$ as follows. For $M^0, M^1 \in \K_{\lambda^+}$, $M^0 \leap{R} M^1$ if and only if there exists increasing continuous resolutions $\seq{M_i^\ell \in \K_\lambda : i < \lambda^+}$ of $M^\ell$ for $\ell = 0,1$ such that for all $i < j < \lambda^+$, $R (M_i^0, M_i^1, M_j^0, M_j^1)$.
\end{defin}

The following is a straightforward ``catching your tail argument'', see the proof of \cite[4.6]{downward-categ-tame-apal} (this assumes that all types are basic, but the argument goes through without this restriction). Roughly, it says that if $M \lea N$ ($\lea$ is the usual order on $\K$), then we can find a resolution of $M$ and $N$ so that the pieces are in \emph{left} weak nonforking amalgamation.

\begin{fact}\label{reflects-lwnf}
  Let $M, N \in \K_{\lambda^+}$. If $M \lea N$, then $M \leap{\LWNF} N$.
\end{fact}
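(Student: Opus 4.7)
The plan is a standard ``catching your tail'' argument. Begin with any pair of increasing continuous resolutions $\seq{\bM_i^0 : i < \lambda^+}$ of $M$ and $\seq{\bM_i^1 : i < \lambda^+}$ of $N$ in $\K_\lambda$ with $\bM_i^0 \lea \bM_i^1$ for each $i$; these exist by L\"owenheim--Skolem and coherence since $M \lea N$ in $\K_{\lambda^+}$.

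The core step is a \emph{long local character} claim: for every $\bb \in \fct{<\omega}{N}$ there is $\alpha_\bb < \lambda^+$ such that, for each $\beta \ge \alpha_\bb$ for which $\tp(\bb, \bM_\beta^0, N) \in \Sbs(\bM_\beta^0)$, this type does not fork over $\bM_{\alpha_\bb}^0$. The local character axiom of $\s$ only applies to chains in $\K_\lambda$ of length at most $\lambda$, so it cannot be applied directly to the $\lambda^+$-chain $\seq{\bM_i^0 : i < \lambda^+}$. Instead, for each limit $\alpha < \lambda^+$ of cofinality $\omega$ (these form a stationary subset of $\lambda^+$, since $\lambda^+$ is an uncountable regular cardinal), pick a cofinal $\omega$-sequence in $\alpha$ and apply the frame's local character axiom to obtain a regressive witness $f_\bb(\alpha) < \alpha$ such that $\tp(\bb, \bM_\alpha^0, N)$, when basic, does not fork over $\bM_{f_\bb(\alpha)}^0$. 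Fodor's lemma then stabilizes $f_\bb$ to a constant $\alpha_\bb$ on a stationary set, and monotonicity together with continuity of $\nf$ propagate the non-forking conclusion to every $\beta \ge \alpha_\bb$.

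Next, define $F : \lambda^+ \to \lambda^+$ by
\[
F(\alpha) := \sup \bigl\{\alpha_\bb + 1 : \bb \in \fct{<\omega}{\bM_\alpha^1}\bigr\}.
\]
Since $|\fct{<\omega}{\bM_\alpha^1}| = \lambda$ and $\lambda^+$ is regular, $F(\alpha) < \lambda^+$. The closure points of $F$ form a club $C \subseteq \lambda^+$; let $g : \lambda^+ \to C$ be its increasing enumeration and set $M_i^\ell := \bM_{g(i)}^\ell$. Continuity of $\seq{\bM_i^\ell : i < \lambda^+}$ together with closedness of $C$ yields that $\seq{M_i^\ell : i < \lambda^+}$ is increasing continuous, and unboundedness of $C$ yields that it still resolves $M^\ell$. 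For any $i < j < \lambda^+$ and any $\bb \in \fct{<\omega}{M_i^1}$ with both $\tp(\bb, M_i^0, M_j^1)$ and $\tp(\bb, M_j^0, M_j^1)$ basic, the long local character claim gives $\alpha_\bb < g(i)$, and monotonicity of $\nf$ then yields that $\tp(\bb, M_j^0, M_j^1)$ does not fork over $M_i^0$. This verifies $\LWNF(M_i^0, M_i^1, M_j^0, M_j^1)$ for all $i < j$, giving $M \leap{\LWNF} N$.

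The main obstacle is the long local character claim: bootstrapping the frame's local character axiom from chains of length $\le \lambda$ to chains of length $\lambda^+$ without any additional hypotheses on $\s$. The Fodor argument on cofinality-$\omega$ points handles this; the detailed verification is carried out in \cite[4.6]{downward-categ-tame-apal}.
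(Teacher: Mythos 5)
Your argument is correct and is essentially the paper's own: the paper gives no details here, describing Fact \ref{reflects-lwnf} as a ``straightforward catching your tail argument'' and citing precisely the lemma \cite[4.6]{downward-categ-tame-apal} that you invoke, and your write-up (bootstrapping local character to $\lambda^+$-chains at cofinality-$\omega$ points via Fodor, then passing to the club of closure points of $F$) is the intended proof. The one point worth spelling out is that Fodor's lemma requires the set of cofinality-$\omega$ ordinals $\alpha$ at which $\tp(\bb,\bM_\alpha^0,N)$ is basic to be stationary; in the type-full case (the case the cited proof treats, as the paper notes) this set is either bounded --- so your claim is vacuous beyond the bound and $\alpha_\bb$ can be taken to be that bound --- or consists of all cofinality-$\omega$ ordinals, since nonalgebraicity is preserved when the base model shrinks.
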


Whether $M \leap{\RWNF} N$ can be concluded as well seems to be a  much more complicated question, and in fact is equivalent to $\s$ being $\goodp$ (Theorem \ref{sym-equiv}), a weakening of symmetry. We now observe that an increasing union of a $\leap{\RWNF}$-increasing chain of saturated models is saturated:

\begin{lem}\label{chainsat-lem}
  Let $\delta < \lambda^{++}$ be a limit ordinal. If $\seq{M_i : i < \delta}$ is a $\leap{\RWNF}$-increasing sequence of saturated models in $\K_{\lambda^+}$, then $\bigcup_{i < \delta} M_i$ is saturated.
\end{lem}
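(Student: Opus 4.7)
The plan is to split on $\cf{\delta}$. If $\cf{\delta} > \lambda$, the conclusion will be immediate: any $N_0 \leas M^\ast := \bigcup_{i < \delta} M_i$ of size $\lambda$ will satisfy $|N_0| \subseteq |M_i|$ for some $i < \delta$ by cofinality, hence $N_0 \lea M_i$ by the coherence axiom; then saturation of $M_i$ realizes every type over $N_0$ inside $M_i \leas M^\ast$. So I focus on the case $\cf{\delta} \le \lambda$, and by passing to a cofinal subsequence I may assume $\delta = \mu$ is a regular cardinal with $\mu \le \lambda$.

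Fix $N_0 \leas M^\ast$ of size $\lambda$ and $p \in \oSsbs(N_0)$; I plan to realize $p$ in $M^\ast$ by invoking Lemma~\ref{union-realize}. The heart of the argument is to construct increasing continuous chains $\seq{N^i : i \le \mu}$ and $\seq{N_i^\ast : i \le \mu}$ in $\K_\lambda$ satisfying: (a) $N^i \leas N_i^\ast$ and $N_i^\ast \lea M_i$; (b) $N_0 \leas N^\mu$; (c) each $N_i^\ast$ realizes every basic type over $N^i$, which is possible because $|\oSsbs(N^i)| \le \lambda$ by bs-stability and $M_i$ is saturated; and (d) $\LWNF(N^i, N_i^\ast, N^j, N_j^\ast)$ for all $i \le j < \mu$. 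Granted (a)--(d), Lemma~\ref{union-realize} will yield that $N_\mu^\ast := \bigcup_{i < \mu} N_i^\ast$ realizes every basic type over $N^\mu$. In particular, the nonforking extension $\bar p \in \oSsbs(N^\mu)$ of $p$ (which exists by the extension axiom) will be realized in $N_\mu^\ast \lea M^\ast$, and any such realizer will also realize $p = \bar p \rest N_0$.

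The conceptual bridge between (d) and the hypothesis is the identity $\RWNF(M_i^k, M_j^k, M_i^{k'}, M_j^{k'}) = \LWNF(M_i^k, M_i^{k'}, M_j^k, M_j^{k'})$, which is immediate from the definition. So my scheme for (d) is to place the $N^i$ and the $N_i^\ast$ inside a single system of $\K_\lambda$-resolutions of the $M_i$ that simultaneously witnesses $M_i \leap{\RWNF} M_j$ for every pair $i < j < \mu$: I would set $N^i = M_i^0$ and $N_i^\ast = M_i^{k_i}$ for suitable $k_i < \lambda^+$ (large enough to secure (c) via bs-stability and saturation of $M_i$), and monotonicity of $\LWNF$ will then give (d).

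The hard part will be producing this simultaneous system, since the definition of $\leap{\RWNF}$ only provides resolutions pair by pair, and I need a single resolution of each $M_i$ compatible with all of its neighbors. I expect to handle this by a catching-your-tail construction of length $\lambda^+$ carried out in parallel across all $i < \mu$: at successor stage $k \to k+1$, each current $M_i^k$ is enlarged so that the new pieces are $\RWNF$-compatible with every pair $(i, j)$ processed so far, enough basic types over the earlier models are realized, and $N_0$ is progressively absorbed into $\bigcup_{i < \mu} M_i^k$ (which stays in $\K_\lambda$ because $\mu \le \lambda$). Taking unions at limits in $k$ and using the continuity axiom should then yield resolutions with the required properties, from which the chains $\seq{N^i}$ and $\seq{N_i^\ast}$ are read off.
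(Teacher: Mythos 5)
Your overall architecture matches the paper's: reduce to $\cf{\delta}\le\lambda$, build a $\delta\times\lambda^+$ array of $\K_\lambda$-approximations to the $M_i$'s, convert $\RWNF$ between two columns into the $\LWNF$ hypothesis of Lemma \ref{union-realize}, and realize basic types over a column-union. But there is a genuine gap at exactly the point you flag as ``the hard part'': producing a single system of resolutions that simultaneously witnesses $M_i \leap{\RWNF} M_j$ for all pairs. Your proposed mechanism --- enlarging each $M_i^k$ at successor stages so that the new pieces are $\RWNF$-compatible with every pair processed so far --- cannot be carried out with the tools available. The existence property (Theorem \ref{weak-props}) produces $\RWNF$-amalgams only as abstract extensions, whereas here the models $M_i^{k+1}$ must be found \emph{inside} the prescribed saturated models $M_i$; and $\RWNF$ has no base-enlargement monotonicity, so you cannot pass from the pairwise-given resolutions (whose base models $P_\alpha$ properly contain your $M_i^k$) to the relation over the base $M_i^k$ that your successor step requires. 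The paper's solution is the missing idea: build the array with \emph{no} $\RWNF$ requirement at all (only the realization-of-basic-types requirement, which is easy from saturation and bs-stability), and then observe that for each pair $i_1<i_2$ the set $C_{i_1,i_2}$ of column indices $j$ such that $\RWNF (M_{i_1, j}, M_{i_2, j}, M_{i_1, j'}, M_{i_2, j'})$ for all $j'\ge j$ is a \emph{club} in $\lambda^+$ (closed by the local character and continuity axioms, unbounded because any resolution agrees on a club with the one furnished by $M_{i_1}\leap{\RWNF} M_{i_2}$). Since there are at most $\lambda$ pairs, intersecting these clubs and re-indexing yields the simultaneous system. Without this club argument your successor step has no justification.

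Two smaller points. First, once the array is in hand the paper uses two adjacent columns ($N^i:=M_{i,j}$ and $N_i^\ast:=M_{i,j+1}$ for a single large $j$ absorbing $N_0$) rather than your staircase $M_i^{k_i}$; this gives the increasing continuous chains required by Lemma \ref{union-realize} for free, whereas your choice of the $k_i$'s forces you to arrange continuity of $\seq{N_i^\ast : i \le \mu}$ at limits separately. Second, realizing every \emph{basic} type over $N_0$ does not by itself yield saturation unless the frame is type-full; the paper closes this by invoking Fact \ref{brimmed-fact} (realize basic types cofinally along a chain above $N_0$ to obtain a model brimmed, hence universal, over $N_0$). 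You should add that reduction.
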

\begin{proof}
  If $\cf{\delta} \ge \lambda^+$, then any $\bigcup_{i < \delta} M_i$ will be $\lambda^+$-saturated on general grounds. Thus assume without loss of generality that $\delta = \cf{\delta} < \lambda^+$. Let $M_\delta := \bigcup_{i < \delta} M_i$. We build $\seq{M_{i, j} : i \le \delta, j \le \lambda^+}$ such that:

  \begin{enumerate}
  \item For any $i \le \delta$, $M_{i, \lambda^+} = M_i$.
  \item For any $i < \delta, j < \lambda^+$, $M_{i, j} \in \K_{\s}$.
  \item For any $i \le \delta$, $\seq{M_{i, j} : j < \lambda^+}$ is increasing and continuous.
  \item For any $j \le \lambda^+$, $\seq{M_{i, j} : i < \delta}$ is increasing and $M_{\delta, j} = \bigcup_{i < \delta} M_{i, j}$.
  \item For any $i_1 < i_2 \le \delta$, $j_1 < j_2 \le \lambda^+$, $M_{i_2, j_2}$ realizes all the types in $\oSsbs (M_{i_1, j_1})$.
  \end{enumerate}

  This is easy to do. Now for each $i_1 < i_2 < \delta$, we have by assumption that $M_{i_1} \leap{\RWNF} M_{i_2}$. Thus the set $C_{i_1, i_2}$ of $j < \lambda^+$ such that for all $j' \in [j, \lambda^+)$, $\RWNF (M_{i_1, j}, M_{i_2, j}, M_{i_1, j'}, M_{i_2, j'})$ is a club (that it is closed follows from the local character and continuity axioms of good frames). Therefore $C := \bigcap_{i_1 < i_2 < \delta} C_{i_1, i_2}$ is also a club. Hence by renaming without loss of generality for all $i_1 < i_2 < \delta$ and all $j \le j' < \lambda^+$, $\RWNF (M_{i_1, j}, M_{i_2, j}, M_{i_1, j'}, M_{i_2, j'})$.

    Now let $N \lea M_\delta$ be such that $N \in \K_{\lambda}$. We want to see that any type over $N$ is realized in $M_\delta$. By Fact \ref{brimmed-fact}, it is enough to show that any \emph{basic} type over $N$ is realized in $M_\delta$.

    Let $j < \lambda^+$ be big-enough such that $N \lea M_{\delta, j}$. It is enough to see that any basic type over $M_{\delta, j}$ is realized in $M_{\delta, j + 1}$. To see this, use Lemma \ref{union-realize} with $\seq{M_i : i \le \delta}$, $\seq{N_i : i \le \delta}$ there standing for $\seq{M_{i, j} : i \le \delta}$, $\seq{M_{i, j + 1} : i \le \delta}$ here. We know that for each $i \le i' < \delta$, $\RWNF (M_{i, j}, M_{i', j}, M_{i, j + 1}, M_{i', j + 1})$ and therefore $\LWNF (M_{i, j}, M_{i, j  +1}, M_{i', j}, M_{i', j + 1})$. Thus the hypotheses of Lemma \ref{union-realize} are satisfied.
\end{proof}

The next fact will be used to prove the existence property of $\LWNF$. Its proof is a direct limit argument similar to e.g.\ \cite[5.2]{gvv-mlq}. Roughly, the nonforking relation there is given by ``there exists a smaller submodel over which the type does not \emph{split}''; in fact, these smaller submodels have to be kept as part of the data of the tower. This is not needed here. The argument is also similar to \cite[3.1.8]{jrsh875}. However there the symmetry axiom axiom is needed: there is an extra requirement on the type of a certain element $b$, but here we do \emph{not} make that requirement so do \emph{not} need symmetry.

\begin{fact}\label{tower-ext}
  Let $\alpha < \lambda^+$. Let $\seq{M_i : i \le \alpha}$ be $\leas$-increasing continuous (in $\K_{\lambda}$) and let $\seq{\ba_i : i < \alpha}$ be given such that $\ba_i \in \fct{<\omega}{M_{i + 1}}$ for all $i < \alpha$ and $\tp (\ba_i, M_i, M_{i + 1}) \in \oSsbs (M_i)$ (we allow the $\ba_i$'s to have different length).

  There exists $\seq{N_i : i \le \alpha}$ $\leas$-increasing continuous such that:

  \begin{enumerate}
  \item $M_i \ltas N_i$ for all $i \le \alpha$.
  \item $\tp (\ba_i, N_i, N_{i + 1})$ does not fork over $M_i$.
  \end{enumerate}
\end{fact}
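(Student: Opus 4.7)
The plan is to construct $\seq{N_i : i \le \alpha}$ by induction on $i$, using no maximal models in $\K_\lambda$, the existence of nonforking extension axiom, and a standard amalgamation/renaming argument. To guarantee strictness $M_i \ltas N_i$ uniformly (and especially at limit stages, where it would otherwise be delicate), I would first fix an element $c \notin |M_\alpha|$ and arrange $c \in |N_0|$; since $|N_0| \subseteq |N_i|$ and $|M_i| \subseteq |M_\alpha|$ for every $i \le \alpha$, this forces $c \in |N_i| \setminus |M_i|$ throughout the construction.

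For the base case, apply no maximal models to pick $N_0 \in \K_\lambda$ with $M_0 \ltas N_0$ and $c \in |N_0|$. For a successor $i+1 \le \alpha$, given $N_i$: apply the existence of nonforking extension axiom to $\tp(\ba_i, M_i, M_{i+1}) \in \oSsbs(M_i)$ to get $q_i \in \oSsbs(N_i)$ of the same length as $\ba_i$, extending $\tp(\ba_i, M_i, M_{i+1})$ and not forking over $M_i$. Let $\ba_i'$ realize $q_i$ in some $N_i' \in \K_\lambda$ with $N_i \leas N_i'$. Since $\tp(\ba_i', M_i, N_i') = \tp(\ba_i, M_i, M_{i+1})$, the definition of orbital types as $\Eat$-equivalence classes (combined with amalgamation in $\K_\lambda$) yields an amalgam $N_{i+1} \in \K_\lambda$ with $\leas$-embeddings $f : N_i' \to N_{i+1}$ and $g : M_{i+1} \to N_{i+1}$ fixing $M_i$ pointwise and satisfying $f(\ba_i') = g(\ba_i)$. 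Renaming so that $f$ is the identity on $N_i'$ and so that $g[M_{i+1}] = M_{i+1}$ inside $N_{i+1}$, I obtain $N_i \leas N_{i+1}$, $M_{i+1} \leas N_{i+1}$ and $\tp(\ba_i, N_i, N_{i+1}) = q_i$, which does not fork over $M_i$.

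At a limit stage $\delta \le \alpha$, set $N_\delta := \bigcup_{i < \delta} N_i$, which belongs to $\K_\lambda$ since $\delta < \lambda^+$. Continuity of $\seq{N_i : i \le \delta}$ is automatic, the relation $c \in |N_0| \setminus |M_\alpha| \subseteq |N_\delta| \setminus |M_\delta|$ gives $M_\delta \ltas N_\delta$, and the nonforking conditions at indices below $\delta$ are preserved by monotonicity. The only nontrivial step is the successor amalgamation; the key observation is that it consumes only existence of a nonforking extension and invariance of orbital types and does \emph{not} invoke symmetry of $\nf_\s$. This is precisely why the argument goes through under Hypothesis \ref{global-hyp-3}, where symmetry is allowed to fail; it also explains the contrast with the analogous construction of \cite{jrsh875}, where the extra requirement on the type of the auxiliary element $b$ forced the use of symmetry.
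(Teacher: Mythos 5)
Your overall strategy --- successor steps via the existence-of-nonforking-extensions axiom plus amalgamation, unions at limits, no appeal to symmetry --- is exactly the ``direct limit argument'' that the paper points to for this Fact, and your closing remark about why symmetry is not needed matches the paper's own comment. However, the successor step as you wrote it has a genuine gap, located precisely where the direct-limit bookkeeping is actually needed. In your amalgam you have $f : N_i' \to N_{i+1}$ and $g : M_{i+1} \to N_{i+1}$ over $M_i$ with $f(\ba_i') = g(\ba_i)$. You then ``rename so that $f$ is the identity on $N_i'$ and so that $g[M_{i+1}] = M_{i+1}$'' and conclude $\tp (\ba_i, N_i, N_{i+1}) = q_i$. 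This cannot be done: once $f$ is the identity, $g(\ba_i) = \ba_i'$, and $\ba_i'$ is an element of $N_i' \setminus N_i$ that is in general distinct from $\ba_i \in M_{i+1}$. So either $g$ is not the inclusion of $M_{i+1}$ (and then it is $\ba_i'$, not $\ba_i$, that realizes $q_i$ over $N_i$, while $\ba_i$ itself --- now a second, different element of $N_{i+1}$ --- has no reason to), or else you must identify $\ba_i$ with $\ba_i'$, which destroys the literal inclusion $N_i' \leas N_{i+1}$. Renaming so that $g$ is the identity instead turns $f[N_i]$ into a mere isomorphic copy of $N_i$ and breaks the chain $\seq{N_j : j \le i}$ built so far. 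Even the weaker goal of making $f \rest N_i$ and $g$ simultaneously into inclusions requires the chosen amalgam to identify nothing of $N_i \setminus M_i$ with $M_{i+1} \setminus M_i$, which an arbitrary amalgam need not satisfy.

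The standard repair is the one the paper's citations allude to: do not insist that the $M_j$'s sit inside the $N_j$'s during the construction. Build a coherent system $\seq{N_i : i \le \alpha}$ together with embeddings $h_i : M_i \to N_i$ satisfying $h_i \rest M_j = h_j$ for $j \le i$, taking unions at limit stages; at successors your argument goes through verbatim with $h_{i+1} (\ba_i)$ in place of $\ba_i$, renaming only so that $N_i \leas N_{i+1}$. At the very end apply a single bijection $\pi$ with $\pi \circ h_\alpha = \id_{M_\alpha}$ that sends $|N_\alpha| \setminus h_\alpha[M_\alpha]$ to fresh elements, and replace each $N_i$ by $\pi[N_i]$; invariance of nonforking then yields the statement for the original $\ba_i$ and $M_i$. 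Your device of the extra element $c$ for strictness at limit stages is a sound idea but should be rephrased in this setting (for instance, arrange $h_i[M_i] \ltas N_i$ at every stage, which survives the final renaming).
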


We can now list and then prove some basic properties of weak nonforking amalgamation. For the convenience of the reader, we repeat Hypothesis \ref{global-hyp-3}.

\begin{thm}\label{weak-props}
  Let $\s = (\K, \nf, \Sbs)$ be a fixed good $(<\omega, \lambda)$-frame, except that it may not satisfy the symmetry axiom. Assume that $\K$ is categorical in $\lambda$. Let $R \in \{\LWNF, \RWNF, \WNF\}$.
  
  \begin{enumerate}
  \item Invariance: If $R (M_0, M_1, M_2, M_3)$ and $f: M_3 \cong M_3'$, then $R (f[M_0], f[M_1], f[M_2], M_3')$.
  \item Monotonicity: If $R (M_0, M_1, M_2, M_3)$ and $M_0 \leas M_\ell' \leas M_\ell$ for $\ell = 1,2$, then $R (M_0, M_1', M_2', M_3)$.
  \item Ambient monotonicity: If $R (M_0, M_1, M_2, M_3)$ and $M_3 \leas M_3'$, then $R (M_0, M_1, M_2, M_3')$. If $M_3'' \leas M_3$ contains $|M_1| \cup |M_2|$, then $R (M_0, M_1, M_2, M_3'')$.
  \item Continuity: If $\delta < \lambda^+$ is a limit ordinal and $\seq{M_i^\ell : i \le \delta}$ are increasing continuous for $\ell < 4$ with $R (M_i^0, M_i^1, M_i^2, M_i^3)$ for each $i < \delta$, then $R (M_\delta^0, M_\delta^1, M_\delta^2, M_\delta^3)$.
  \item Long transitivity: If $\alpha < \lambda^+$ is an ordinal, $\seq{M_i : i \le \alpha}$, $\seq{N_i : i \le \alpha}$ are increasing continuous and $\LWNF (M_i, N_i, M_{i + 1}, N_{i + 1})$ for all $i < \alpha$, then $\LWNF (M_0, N_0, M_\alpha, N_\alpha)$.
  \item Existence: If $R \neq \WNF$, $M_0 \leas M_\ell$, $\ell = 1,2$, then there exists $M_3 \in \K_\lambda$ and $f_\ell: M_\ell \xrightarrow[M_0]{} M_3$ such that $R (M_0, f_1[M_1], f_2[M_2], M_3)$.
  \end{enumerate}
\end{thm}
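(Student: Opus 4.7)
The plan is to verify items (1)--(5) by direct applications of the good-frame axioms to the basic types appearing in the definition of $\LWNF$, with $\RWNF$ and $\WNF$ reducing formally to $\LWNF$ (since by Definition \ref{wnf-def} the former is $\LWNF$ with the roles of $M_1$ and $M_2$ swapped, and the latter is their conjunction), and then to prove the existence property (6) by iterating Fact \ref{tower-ext} along a resolution of $M_1$.

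Items (1) invariance and (3) ambient monotonicity are purely formal, since orbital types, basic-ness, and the nonforking predicate are all invariant under isomorphism of the ambient and unchanged when the ambient is enlarged, or restricted to a submodel still containing the tuple in question. For (2) monotonicity, given $\LWNF(M_0,M_1,M_2,M_3)$, $M_0\leas M_\ell'\leas M_\ell$, and $\bb\in\fct{<\omega}{|M_1'|}$ with both $\tp(\bb,M_0,M_3)$ and $\tp(\bb,M_2',M_3)$ basic, I would produce via the existence axiom the nonforking extension $q\in\oSsbs(M_2)$ of $\tp(\bb,M_0,M_3)$ and identify $q\rest M_2'$ with $\tp(\bb,M_2',M_3)$ using uniqueness of nonforking extensions. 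When $\tp(\bb,M_2,M_3)$ is itself basic, the original $\LWNF$ hypothesis gives $q=\tp(\bb,M_2,M_3)$ directly; when it fails to be basic (meaning $\bb$ is already realized in $M_2$), the plan is to pass to a larger ambient in which $q$ is realized by a fresh tuple and transfer the conclusion back via invariance.

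For (4) continuity, fix a finite $\bb\in M_\delta^1$ satisfying both basic-type hypotheses, pick $i_0<\delta$ with $\bb\in M_{i_0}^1$, and use frame monotonicity to see that $\tp(\bb,M_j^2,M_j^3)$ is basic for all $j\in[i_0,\delta)$; the hypothesis $\LWNF(M_j^0,M_j^1,M_j^2,M_j^3)$ then yields that this type does not fork over $M_j^0$. Applying local character (along a cofinal subsequence of $\delta$ of regular cofinality) to the basic type $\tp(\bb,M_\delta^2,M_\delta^3)$ produces some $j^\ast\geq i_0$ with nonforking over $M_{j^\ast}^2$; combining this with the frame's transitivity axiom (since $\tp(\bb,M_{j^\ast}^2,M_\delta^3)$ does not fork over $M_{j^\ast}^0$) and enlarging the anchor from $M_{j^\ast}^0$ to $M_\delta^0$ by monotonicity delivers the conclusion. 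Long transitivity (5) is then induction on $\alpha$: the successor step combines the hypothesized $\LWNF$ at level $\beta$ with the frame's transitivity axiom applied to the types $\tp(\bb,M_{\beta+1},N_{\beta+1})$ and $\tp(\bb,M_\beta,N_\beta)$, and the limit step is exactly (4).

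The main obstacle is (6). By the reduction of $\RWNF$ to $\LWNF$ it suffices to treat $\LWNF$. The plan is to enumerate in $\lambda$ steps both the elements of $M_1$ and the finite tuples of $M_1$ (there are only $\lambda$ of each, by $|M_1|=\lambda$ and stability of $\s$), then build an increasing continuous tower $\seq{M_1^i:i\leq\lambda}$ in $\K_{\s}$ resolving $M_1$ along this enumeration, with $M_1^0=M_0$, $M_1^\lambda=M_1$, and $M_1^i\leas M_1$, choosing at each successor step a basic-type witness $\ba_i\in M_1^{i+1}$ (using density of basic types wherever necessary to guarantee basic-ness of the type of $\ba_i$ over $M_1^i$). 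Applying Fact \ref{tower-ext} starting from $N_0:=M_2$ yields $\seq{N_i:i\leq\lambda}$ with $M_1^i\leas N_i$ and $\tp(\ba_i,N_i,N_{i+1})$ nonforking over $M_1^i$; careful bookkeeping, arranging that every finite basic tuple from $M_1$ is chosen as some $\ba_i$, delivers $\LWNF$ between consecutive links of the pair of towers. The long transitivity proved in (5) then assembles these step-wise amalgams into $\LWNF(M_0,M_1,M_2,N_\lambda)$, so that taking $M_3:=N_\lambda$ with inclusion embeddings gives the desired amalgam.
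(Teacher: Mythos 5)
Your treatment of items (1)--(5) is essentially the paper's: invariance and the monotonicity clauses are formal, and continuity and long transitivity follow from the local character, continuity, and transitivity axioms of the frame in the way you describe (the paper dispatches these in one sentence). The problem is item (6), existence, which is the entire substance of the paper's proof, and there your argument has a genuine gap.

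The plan of resolving $M_1$ as $\seq{M_1^i : i \le \lambda}$ with designated basic tuples $\ba_i \in M_1^{i+1}$ and applying Fact \ref{tower-ext} once (with $N_0 := M_2$) does not deliver $\LWNF$. First, Fact \ref{tower-ext} only controls the types of the designated tuples, and what it gives is that $\tp(\ba_i, N_i, N_{i+1})$ does not fork over $M_1^i$ --- the anchor is the \emph{intermediate} model $M_1^i$ of the left tower, not $M_0$, and $M_1^i$ does not sit below $M_2$, so no frame axiom (monotonicity or transitivity) converts this into ``$\tp(\ba_i, M_2, M_3)$ does not fork over $M_0$,'' which is what $\LWNF(M_0, M_1, M_2, M_3)$ demands; indeed $\tp(\ba_i, M_1^i, M_1)$ will in general be a \emph{forking} extension of $\tp(\ba_i, M_0, M_1)$, and then the conclusion is simply false for that choice of tower. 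Second, long transitivity cannot assemble your step-wise statements: its hypothesis $\LWNF(M_i, N_i, M_{i+1}, N_{i+1})$ requires the tuple-source model ($N_i$, whose $i=0$ instance reappears as the fixed left model of the conclusion) to be the one over which nothing new is asserted as $i$ grows, whereas in your configuration the tuples come from the \emph{growing} tower $\seq{M_1^i}$, so the conclusion of long transitivity is about tuples from $M_1^0 = M_0$ only. The real difficulty --- that $\LWNF$ is a simultaneous condition on all finite tuples of $M_1$, with no uniqueness or extension property available for types of infinite sequences --- is exactly what the paper's proof of the Claim is built to overcome: it runs a $\lambda\cdot\lambda$-stage construction of the left tower with the dichotomy of clause (7) (for each candidate tuple, either its type is forced to fork at some later stage, or no extension tower produced by Fact \ref{tower-ext} can ever make it fork), so that the \emph{final} left tower works against \emph{every} subsequent application of Fact \ref{tower-ext}; it then produces one quadruple with $N_1, N_2$ brimmed over $N_0$ and transfers to arbitrary $M_0 \leas M_1, M_2$ via categoricity and universality of brimmed models. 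None of this anticipation of the extension tower is present in your one-pass construction, and without it the argument fails.
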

\begin{proof}
  Invariance and the monotonicity properties are straightforward to prove. Continuity and long transitivity follow directly from the local character, continuity, and transitivity properties of good frames. We prove existence via the following claim:

  \underline{Claim}: There exists $N_0, N_1, N_2, N_3 \in \K_{\s}$ such that $\LWNF (N_0, N_1, N_2, N_3)$ and $N_\ell$ is brimmed over $N_0$ for $\ell = 1,2$.

  Existence easily follows from the claim: given $M_0 \leas M_\ell$, $\ell = 1,2$, there is (by categoricity in $\lambda$) an isomorphism $f: M_0 \cong N_0$ and (by universality of brimmed models) embeddings $f_\ell : M_\ell \rightarrow N_\ell$ extending $f$ for $\ell = 1,2$. After some renaming, we obtain the desired $\LWNF$-amalgam. To obtain an $\RWNF$-amalgam, reverse the role of $M_1$ and $M_2$.

  \underline{Proof of Claim}: The idea of the proof is as follows: for some suitable ordinal $\alpha$, we want to build $\seq{M_i : i \le \alpha}$, $\seq{a_i \in M_{i + 1} : i < \alpha}$ with the following property: whenever $\seq{N_i : i \le \alpha}$ is as described by Fact \ref{tower-ext} (plus slightly more), we must have that $\LWNF (M_0, M_\alpha, N_0, N_\delta)$, $M_\alpha$ is brimmed over $M_0$, and $N_0$ is brimmed over $M_0$. To achieve this, we simply start with an arbitrary $\seq{M_i : i \le \alpha}$, $\seq{a_i : i < \alpha}$ and, if it fails the property, take a witness to the failure, add some more $a_j$'s to make it more brimmed, and start again to consider whether this witness satisfies the property. After doing this for sufficiently many steps, we eventually succeed to build the desired object. This is somewhat similar to the construction of a reduced tower in \cite{shvi635, gvv-mlq}, although here we are dealing with nonforking independence and not just set-theoretic disjointness.

  We now start with the proof. Let $\delta := \lambda \cdot \lambda$. We choose $(\bar M^\alpha,\bar a^\alpha)$ by induction on $\alpha \le \delta$ such that:
  
  \begin{enumerate}
  \item $\bM^\alpha = \seq{M^\alpha_i :i \le \alpha}$ is $\leas$-increasing continuous.

  \item $\ba^\alpha = \seq{\ba_i : i < \alpha}$, and $\ba_i \in M_{i + 1}^\alpha$ for all $i < \alpha$.
    \item For all $i < \alpha$, $\tp (\ba^{\alpha}_i, M^\alpha_i, M^\alpha_{i+1}) \in \oSsbs (M_i^\alpha)$.
    \item For each $i \le \delta$, $\seq{M^\alpha_i:\alpha \in [i,\delta]}$ is $\ltas$-increasing continuous.
    \item\label{clause-5} For each $i < \delta$ and each $\alpha \in (i, \delta]$, $\tp (\ba_i, M_i^\alpha, M_{i + 1}^{\alpha})$ does not fork over $M_i^{i}$.
\item\label{clause-6} If $p \in \oSsbs (M_i^\alpha)$ for $i \le \alpha < \delta$, then for $\lambda$-many $\beta \in [\alpha,\delta)$, $\tp( \ba_\beta,M^{\beta + 1}_\beta,M^{\beta +1}_{\beta + 1})$ is a nonforking extension of $p$.
\item\label{clause-7} If $i < \alpha < \delta$ and $\tp (\ba, M_0^\alpha, M_{i + 1}^{\alpha}) \in \oSsbs (M_0^\alpha)$, then for some $\beta \in (\alpha,\delta)$ exactly one of the following occurs:
  \begin{enumerate}
  \item\label{clause-7a} $\tp(\bar a,M^{\beta +1}_0,M^{\beta+1}_{i+1})$ forks over $M^\alpha_0$.
  \item\label{clause-7b} There is no $\seq{M^*_j:j \le i+1}$ $\leas$-increasing continuous such that:
    \begin{enumerate}
    \item $M_j^\beta \leas M_j^*$ for all $j \le i + 1$.
    \item $\tp (\ba_j, M_j^*, M_{j + 1}^*)$ does not fork over $M_j^\beta$ for all $j < i + 1$.
    \item $\tp (\ba, M_0^*, M_{i + 1}^*)$ forks over $M^\beta_0$.
    \end{enumerate}
  \end{enumerate}
\end{enumerate}

  \begin{itemize}
  \item[] \underline{This is possible}:
    Along the construction, we also build an enumeration $\seq{(\bb_j^\gamma, k_j^\gamma, i_j^\gamma, \alpha_j^\gamma) : j < \lambda, \gamma < \lambda}$ such that for any $\gamma \in (0, \lambda)$, any $\alpha < \lambda \cdot \gamma$, any $i < \alpha$, any $k \le i$, and any $\ba \in \fct{<\omega}{M_{i + 1}^\alpha}$, if $\tp (\ba, M_k^\alpha, M_{i + 1}^\alpha) \in \oSsbs (M_k^\alpha)$, then there exists $j < \lambda$ so that $\bb_j^{\gamma} = \ba$, $i_j^{\gamma} = i$, $k_j^\gamma = k$, and $\alpha_j^{\gamma} = \alpha$. We require that always $k_j^\gamma \le i_j^{\gamma} < \alpha_j^{\gamma} < \lambda \cdot \gamma$ and the triple $(\bb_j^{\gamma}, M_{k_j^{\gamma}}^{\alpha_j^{\gamma}}, M_{i_j^\gamma + 1}^{\alpha_j^{\gamma}})$ represents a basic type. We make sure that at stage $\lambda \cdot (\gamma + 1)$ of the construction below, $\bb_j^{\gamma'}, k_j^{\gamma'}, i_j^{\gamma'}, \alpha_j^{\gamma'}$ are defined for all $j < \lambda$, $\gamma' \le \gamma$.
    
    For $\alpha = 0$, take any $M_0^0 \in \K_{\s}$. For $\alpha$ limit, let $M_i^\alpha := \bigcup_{\beta \in [i, \alpha)} M_i^\beta$ for $i < \alpha$ and $M_\alpha^\alpha := \bigcup_{i < \alpha} M_i^\alpha$. Now assume that $\bM^{\alpha}$, $\ba^{\alpha}$ have been defined  for $\alpha < \delta$. We define $\bM^{\alpha + 1}$ and $\ba_{\alpha}$. Fix $\rho$ and $j < \lambda$ such that $\alpha = \lambda \cdot \rho + j$. We consider two cases.

      \begin{itemize}
      \item \underline{Case 1: $\rho$ is zero or a limit}: Use Fact \ref{tower-ext} to get $\seq{M_i^{\alpha + 1} : i \le \alpha}$ $\ltas$-increasing continuous such that $M_i^\alpha \ltas M_i^{\alpha + 1}$ for all $i \le \alpha$, and for all $i < \alpha$, $\tp (\ba_i, M_i^{\alpha + 1}, M_{i + 1}^{\alpha + 1})$ does not fork over $M_i^\alpha$. Pick any $M_{\alpha + 1}^{\alpha + 1}$ with $M_\alpha^{\alpha + 1} \ltas M_{\alpha + 1}^{\alpha + 1}$ and any $\ba_{\alpha} \in \fct{<\omega}{M_{\alpha + 1}^{\alpha + 1}}$ such that $\tp (\ba_{\alpha}, M_{\alpha}^{\alpha + 1}, M_{\alpha + 1}^{\alpha + 1}) \in \oSsbs (M_\alpha^{\alpha + 1})$.
      \item \underline{Case 2: $\rho$ is a successor}: Say $\rho = \gamma + 1$. Let $\ba := \bb_j^\gamma$, $\alpha_0 := \alpha_j^\gamma$, $k_0 := k_j^\gamma$, $i_0 := i_j^\gamma$. There are two subcases.
        \begin{itemize}
        \item \underline{Subcase 1}: Either $k_0 \neq 0$, or $k_0 = 0$ and (\ref{clause-7b}) holds with $i, \alpha, \beta$ there standing for $i_0, \alpha_0, \alpha$ here.

          In this case, we proceed as in Case 1 to define $\seq{M_i^{\alpha + 1} : i \le \alpha}$. Then we pick $\ba_{\alpha}$, $M_{\alpha + 1}^{\alpha + 1}$ such that $\tp (\ba_\alpha, M_\alpha^{\alpha}, M_{\alpha + 1}^{\alpha + 1})$ is the nonforking extension of $\tp (\ba, M_{i_0}^{\alpha_0}, M_{i_0 + 1}^{\alpha_0})$.

        \item \underline{Subcase 2}: $k_0 = 0$ and (\ref{clause-7b}) fails.

          In this case, let $\seq{M_j^\ast : j \le i_0 + 1}$ witness the failure and set $M_j^{\alpha + 1} := M_j^\ast$ for $j \le i_0 + 1$. Then continue as in Case 1 and define $\ba_\alpha$, $M_{\alpha + 1}^{\alpha + 1}$ as before.
        \end{itemize}
      \end{itemize}
    \item[] \underline{This is enough}:

      We proceed via a series of subclaims:

      \underline{Subclaim 1}: If $p \in \oSsbs (M_i^\delta)$ for $i < \delta$, then for $\lambda$-many $\beta \in [i,\delta)$, $\tp( \ba_\beta,M_{\beta}^{\delta},M^{\delta}_{\beta + 1})$ is a nonforking extension of $p$.

        \underline{Proof of subclaim 1}: Pick $i' \in (i, \delta)$ such that $p$ does not fork over $M_i^{i'}$. By (\ref{clause-6}), we know that for $\lambda$-many $\beta \in [i', \delta)$, the nonforking extension of $p \rest M_i^{i'}$ to $M_{\beta}^{\beta + 1}$ is realized in $M_{\beta + 1}^{\beta + 1}$ by $\ba_\beta$. But by (\ref{clause-5}) we also have that $\tp (\ba_\beta, M_{\beta}^{\delta}, M_{\beta + 1}^{\delta})$ does not fork over $M_\beta^{\beta}$. In particular by uniqueness $\ba_\beta$ also realizes $p$. $\dagger_{\text{subclaim 1}}$

          \underline{Subclaim 2}: $M_\delta^{\delta}$ is brimmed over $M_0^\delta$.
          
          \underline{Proof of subclaim 2}: Apply Fact \ref{brimmed-fact} to the chain $\seq{M_i^\delta : i \le \delta}$, using the previous step. $\dagger_{\text{subclaim 2}}$

          We now choose $\bM^* = \seq{M_i^\ast : i \le \delta}$ increasing continuous such that $M_0^\ast$ is brimmed over $M_0^\delta$, $M_i^\delta \leas M_i^\ast$ for all $i \le \delta$, and $\tp (\ba_i, M_i^\ast, M_{i + 1}^\ast)$ does not fork over $M_i^\delta$. This is possible, see case 1 above. Now let $(N_0, N_1, N_2, N_3) := (M_0^\delta, M_\delta^\delta, M_0^\ast, M_\delta^\ast)$. We have just said that $M_0^\ast$ is brimmed over $M_0^\delta$, and by subclaim 2, $M_\delta^{\delta}$ is brimmed over $M_0^\delta$. Thus $N_\ell$ is brimmed over $N_0$ for $\ell = 1,2$. It remains to see:

          \underline{Subclaim 3}: $\LWNF (M_0^\delta, M_\delta^{\delta}, M_0^\ast, M_{\delta}^\ast)$

          \underline{Proof of subclaim 3}: Pick $\ba \in \fct{<\omega}{M_\delta^{\delta}}$ such that $\tp (\ba, M_0^\delta, M_{\delta}^\delta)$ is basic. By local character, there exists $\alpha < \delta$ such that $\tp (\ba, M_0^\delta, M_\delta^\delta)$ does not fork over $M_0^\alpha$. Further, we can increase $\alpha$ if necessary and pick $i < \alpha$ such that $\ba \in \fct{<\omega}{M_{i + 1}^\alpha}$. We now apply Clause (\ref{clause-7}). We know that (\ref{clause-7a}) fails for all $\beta \in (\alpha, \delta)$ by the choice of $\alpha$, therefore (\ref{clause-7b}) must hold for all $\beta \in (\alpha, \delta)$. Now if $\tp (\ba, M_0^\ast, M_{\delta}^\ast)$ forks over $M_0^\delta$, then it must fork over $M_0^\beta$ for all high-enough $\beta$, but then $\seq{M_j^\ast : j \le i + 1}$ would contradict Clause (\ref{clause-7b}). Therefore $\tp (\ba, M_0^\ast, M_{\delta}^\ast)$ does not fork over $M_0^\delta$, as desired. $\dagger_{\text{subclaim 3}}$

          $\dagger_{\text{Claim}}$
  \end{itemize}
\end{proof}

The following properties of $\LWNF$ may or may not hold in general (we have no examples for the failure of symmetry, but uniqueness fails in the last good frame of the Hart-Shelah example, see \cite{hs-example,counterexample-frame-v4-toappear}):

\begin{defin}\label{sym-def}
  Let $R \in \{\LWNF, \RWNF, \WNF\}$.

  \begin{enumerate}
  \item We say that $R$ has the \emph{symmetry property} if $R (M_0, M_1, M_2, M_3)$ implies $R (M_0, M_2, M_1, M_3)$.
  \item We say that $R$ has the \emph{uniqueness property} if whenever $R (M_0, M_1, M_2, M_3)$ and $R (M_0, M_1, M_2, M_3')$, there exists $M_3''$ with $M_3' \leas M_3''$ and $f: M_3 \xrightarrow[|M_1| \cup |M_2|]{} M_3''$.
  \end{enumerate}
\end{defin}

The following are trivial observations about the definitions:

\begin{remark}\label{trivial-obs} \
  \begin{enumerate}
  \item $\WNF$ has the symmetry property, and $\LWNF$ has the symmetry property if and only if $\RWNF$ has the symmetry property if and only if $\LWNF = \RWNF = \WNF$.
  \item $\LWNF$ has the uniqueness property if and only $\RWNF$ has it.
  \end{enumerate}
\end{remark}

Recall from \cite[III.1.3]{shelahaecbook}:

\begin{defin}\label{goodp-def}
  $\s$ is \emph{$\goodp$} when the following is \emph{impossible}:

  There exists an increasing continuous $\seq{M_i : i < \lambda^+}$, $\seq{N_i : i < \lambda^+}$, a basic type $p \in \oSsbs (M_0)$, and $\seq{\ba_i : i < \lambda^+}$ such that for any $i < \lambda^+$:

  \begin{enumerate}
  \item $M_i \leas N_i$.
  \item $\ba_{i + 1} \in |M_{i + 2}|$ and $\tp (\ba_{i + 1}, M_{i + 1}, M_{i + 2})$ is a nonforking extension of $p$, but $\tp (\ba_{i + 1}, N_0, N_{i + 2})$ forks over $M_0$.
  \item $\bigcup_{j < \lambda^+} M_j$ is saturated.
  \end{enumerate}
\end{defin}

We now show that being $\goodp$ is a consequence of symmetry for $\LWNF$. Moreover, $\goodp$ allows us to build a superlimit in $\lambda^+$.

\begin{thm}\label{sym-equiv}
  $(\ref{equiv-0}) \Rightarrow (\ref{equiv-1}) \Leftrightarrow (\ref{equiv-2}) \Rightarrow (\ref{equiv-3})$, where:

  \begin{enumerate}
  \item\label{equiv-0} $\LWNF$ has the symmetry property.
  \item\label{equiv-1} $\s$ is $\goodp$.
  \item\label{equiv-2} For $M, N \in \K_{\lambda^+}$ both saturated, $M \lea N$ implies $M \leap{\WNF} N$.

  \item\label{equiv-3} There is a superlimit model in $\K_{\lambda^+}$.
  \end{enumerate}
\end{thm}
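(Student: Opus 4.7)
The implications are proved in the order $(1) \Rightarrow (2)$, $(2) \Leftrightarrow (3)$, $(3) \Rightarrow (4)$. The main tools are Fact \ref{reflects-lwnf} (reflection of $\lea$ by $\LWNF$-resolutions), Lemma \ref{chainsat-lem} (chains of saturated models in $\leap{\RWNF}$ have saturated unions), Remark \ref{trivial-obs} (bookkeeping of symmetries), the structural properties of weak nonforking amalgamation from Theorem \ref{weak-props}, bs-stability, and Fodor's pressing-down lemma.

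\textbf{$(1) \Rightarrow (2)$.} By Remark \ref{trivial-obs}(1), symmetry of $\LWNF$ forces $\LWNF = \RWNF = \WNF$. Suppose toward contradiction that $\goodp$ fails, witnessed by $\langle M_i, N_i : i < \lambda^+\rangle$, $p \in \oSsbs(M_0)$, and $\langle \bar{a}_i : i < \lambda^+\rangle$. Set $M^* := \bigcup_i M_i$ and $N^* := \bigcup_i N_i$, so $M^* \lea N^*$ in $\K_{\lambda^+}$. By Fact \ref{reflects-lwnf} combined with our hypothesis, $M^* \leap{\WNF} N^*$. A club-intersection argument (using the continuity and monotonicity parts of Theorem \ref{weak-props} together with a back-and-forth alignment of the given and witnessing resolutions) produces a club $C \subseteq \lambda^+$ with $\WNF(M_i, N_i, M_j, N_j)$ for all $i < j$ in $C$. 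The $\RWNF$-clause unfolds to $\LWNF(M_i, M_j, N_i, N_j)$, which applied at $i = 0 \in C$ and any $j \in C$ with $j \ge k + 2$ forces $\tp(\bar{a}_{k+1}, N_0, N_{k+2})$ not to fork over $M_0$, contradicting the $\goodp$-failure.

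\textbf{$(2) \Leftrightarrow (3)$.} For $(3) \Rightarrow (2)$, assume $\goodp$ fails with the above witness; $M^*$ is saturated and $N^* = \bigcup_i N_i$ extends to a saturated $N^\dagger \in \K_{\lambda^+}$ by iterating brimmed extensions. Applying (3) to $M^* \lea N^\dagger$ gives a $\WNF$-resolution which, after aligning (on a club) so that the $M$-part is $\langle M_i\rangle$ and the $N$-part $\langle N_i'\rangle$ extends $\langle N_i\rangle$, yields via its $\RWNF$-clause that $\tp(\bar{a}_{k+1}, N_0', N_{k+2}')$ does not fork over $M_0$; monotonicity of nonforking (restricting $N_0'$ to $N_0$) and invariance of types under enlarging the ambient model then collapse this to $\tp(\bar{a}_{k+1}, N_0, N_{k+2})$, contradicting the $\goodp$-failure. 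For $(2) \Rightarrow (3)$, given saturated $M \lea N$ in $\K_{\lambda^+}$, fix an $\LWNF$-resolution via Fact \ref{reflects-lwnf}, refining so each $M_{i+1}$ (resp.\ $N_{i+1}$) is brimmed over $M_i$ (resp.\ $N_i$). If $\RWNF$ fails on a stationary set, thin so that the failures occur at $j_\alpha = \alpha+2$ and extract $\bar{a}_{\alpha+1} \in M_{\alpha+2}$ whose type over $N_\alpha$ is basic and forks over $M_\alpha$. Local character and bs-stability applied to the restrictions $\tp(\bar{a}_{\alpha+1}, M_\alpha, N_{\alpha+2})$, together with Fodor's lemma, thin the stationary set further so that these restrictions all become the nonforking extensions of a single basic type $p \in \oSsbs(M_\beta)$ for a fixed $\beta$. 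Re-indexing the chain from $\beta$ and closing continuously produces a witness to the failure of $\goodp$, contradicting (2); this unification is the central technical obstacle of the entire proof.

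\textbf{$(3) \Rightarrow (4)$.} Build a saturated $M^* \in \K_{\lambda^+}$ as the union of a $\lea$-increasing continuous chain of length $\lambda^+$ of brimmed extensions in $\K_\lambda$ (via Definition \ref{nicely-stable-def} and categoricity in $\lambda$); uniqueness is by back-and-forth (cf.\ Fact \ref{brimmed-uq}), universality is by saturation, and non-maximality follows by iterating the same construction to produce models in $\K_{\lambda^{++}}$. Given a limit $\delta < \lambda^{++}$ and a $\lea$-increasing chain $\langle M_i : i < \delta\rangle$ of isomorphic copies of $M^*$, every pair $M_i \lea M_j$ for $i < j$ consists of saturated models, so (3) yields $M_i \leap{\WNF} M_j$, and in particular $M_i \leap{\RWNF} M_j$. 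Thus the chain is $\leap{\RWNF}$-increasing in the sense required by Lemma \ref{chainsat-lem}, which then makes $\bigcup_i M_i$ saturated in $\K_{\lambda^+}$, hence isomorphic to $M^*$.
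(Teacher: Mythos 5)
Your proposal is correct and follows essentially the same route as the paper: Fact \ref{reflects-lwnf} together with $\LWNF=\RWNF=\WNF$ for the first implication, the club/Fodor/local-character extraction of a $\goodp$-failure witness for the equivalence of (\ref{equiv-1}) and (\ref{equiv-2}), and Lemma \ref{chainsat-lem} for the superlimit. The only quibble is that in your argument for $(\ref{equiv-0})\Rightarrow(\ref{equiv-1})$ you cannot assume $0\in C$; as in the paper's own treatment, one instead applies the $\LWNF$-clause at an arbitrary (sufficiently small) element of the club and then uses transitivity and monotonicity of nonforking to pull the non-forking conclusion down to $N_0$ and $M_0$.
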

\begin{proof} \
  \begin{itemize}
    \item \underline{(\ref{equiv-2}) implies (\ref{equiv-3})}: This follows from Lemma \ref{chainsat-lem} and the fact that the saturated model in $\lambda^+$ is universal and has a proper extension \cite[II.4.13]{shelahaecbook}.

    \item \underline{$\neg (\ref{equiv-1})$ implies $\neg (\ref{equiv-2})$}: Fix a witness $\seq{M_i : i < \lambda^+}$, $\seq{N_i : i < \lambda^+}$, $\seq{\ba_i : i < \lambda^+}$, $p$ to the failure of being $\goodp$. Write $M_{\lambda^+} := \bigcup_{i < \lambda^+} M_i$, $N_{\lambda^+} := \bigcup_{i < \lambda^+} N_i$. By assumption, $M_{\lambda^+}$ is saturated. Clearly, increasing the $N_i$'s will not change that we have a witness so without loss of generality $N_{\lambda^+}$ is also saturated. We claim that $M_{\lambda^+} \not \leap{\RWNF} N_{\lambda^+}$. We show this by proving that for any $i < \lambda^+$ and any $j \le i + 1$, $\neg \RWNF (M_j, N_j, M_{i + 2}, N_{i + 2})$. Indeed, $\tp (\ba_{i + 1}, N_j, N_{i + 2})$ forks over $M_j$: if not, then by transitivity $\tp (\ba_{i + 1}, N_j, N_{i + 2})$ does not fork over $M_0$, and hence $\tp (\ba_{i + 1}, N_0, N_{i + 2})$ does not fork over $M_0$, and we know that this is not the case of the witness we selected.

    \item \underline{$\neg (\ref{equiv-2})$ implies $\neg (\ref{equiv-1})$}: Fix $M, N$ saturated in $\K_{\lambda^+}$ such that $M \lea N$ but $M \not \leap{\RWNF} N$. 

      \underline{Claim}: For any $A \subseteq |M|$ of size $\lambda$, there exists $M_0 \leas M_1 \lea M$ and $N_0 \leas N_1 \lea N$ such that $M_0 \leas N_0$, $M_1 \leas N_1$, $A \subseteq |M_0|$, but $\neg \RWNF (M_0, N_0, M_1, N_1)$.

      \underline{Proof of Claim}: If not, we can use failure of the claim and continuity of $\RWNF$ to build increasing continuous resolution $\seq{M_i : i \le \lambda^+}$, $\seq{N_i : i \le \lambda^+}$ of $M$ and $N$ respectively such that $\RWNF (M_i, N_i, M_j, N_j)$ for all $i < j < \lambda^+$. Thus $M \leap{\RWNF} N$, contradicting the assumption. $\dagger_{\text{Claim}}$

      Build $\seq{M_i^\ast : i \le \lambda^+}$, $\seq{N_i^\ast : i \le \lambda^+}$ increasing continuous resolutions of $M$, $N$ respectively such that for all $i < \lambda^+$, $M_i^\ast \leas N_i^\ast$ and $\neg \RWNF (M_{i + 1}^\ast, N_{i + 1}^\ast, M_{i + 2}^\ast, N_{i + 2}^\ast)$. This is possible by the claim. Let $\ba_{i + 1}^\ast \in |M_{i + 2}^\ast|$ witness the $\RWNF$-forking, i.e.\ $\tp (\ba_{i + 1}^\ast, N_{i + 1}^\ast, N_{i + 2}^\ast)$ forks over $M_{i + 1}^\ast$. By Fodor's lemma, local character, and stability, there exists a stationary set $S$, $i_0 < \lambda^+$ and $p \in \oSsbs (M_{i_0}^\ast)$ such that for all $i \in S$, $\tp (\ba_{i + 1}^\ast, M_{i}^\ast, M_{i + 2}^\ast)$ is the nonforking extension of $p$. Without loss of generality, $i_0$ is limit and all elements of $S$ are also limit ordinals.

      Now build an increasing continuous sequence of ordinals $\seq{j_i : i < \lambda^+}$ as follows. Let $j_0 := i_0$. For $i$ limit, let $j_i := \sup_{k < i} j_k$. For $i$ successor, pick any $j_i \in S$ with $j_i > j_{i - 1}$.

      Now for $i$ not the successor of a limit, let $M_i := M_{j_i}^\ast$, $N_i := N_{j_i}^\ast$, $\ba_i := \ba_{j_i}^\ast$. For $i = k + 1$ with $k$ a limit, set $M_i := M_{j_k}^\ast$, $N_i := N_{j_k}^\ast$, $\ba_i := \ba_{j_i}^\ast$. This gives a witness to the failure of being $\goodp$.
    \item \underline{(\ref{equiv-0}) implies (\ref{equiv-2})}: If $\LWNF$ has the symmetry property, then by Remark \ref{trivial-obs}, $\LWNF = \RWNF = \WNF$. By Fact \ref{reflects-lwnf}, it follows that $M \lea N$ implies $M \leap{\WNF} N$ for any $M, N \in \K_{\lambda^+}$, so (\ref{equiv-2}) holds.
  \end{itemize}
\end{proof}

\begin{question}
  Are the conditions in Theorem \ref{sym-equiv} all equivalent?
\end{question}
\begin{question}
  Is there a good $\lambda$-frame $\s$ such that $\LWNF_{\s}$ does \emph{not} have the symmetry property?
\end{question}

The next result shows that the uniqueness property has strong consequences. The first author has given conditions under which when $\lambda = \aleph_0$, failure of uniqueness implies nonstructure \cite[VII.4.16]{shelahaecbook2}.

\begin{thm}\label{uq-consequence}
  Assume that $\s$ is a good $(<\omega, \lambda)$-frame (so it satisfies symmetry). If $\LWNF$ has the uniqueness property, then $\LWNF$ has the symmetry property and $\s$ is successful $\goodp$ (see \cite[III.1.1]{shelahaecbook}).
\end{thm}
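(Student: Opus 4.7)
The plan is to establish the two conclusions in turn. First I would prove that $\LWNF$ has the symmetry property; the property $\goodp$ then follows at once from the implication $(\ref{equiv-0}) \Rightarrow (\ref{equiv-1})$ of Theorem \ref{sym-equiv}. Successfulness will be addressed afterwards.

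By Remark \ref{trivial-obs}, symmetry of $\LWNF$ is equivalent to $\LWNF = \RWNF$. I would first reduce to amalgams of brimmed models: given $\LWNF(M_0, M_1, M_2, M_3)$, I would iteratively extend both $M_1$ and $M_2$ within an ambient extension $M_3^+ \geas M_3$, using $\LWNF$-existence and long transitivity from Theorem \ref{weak-props}, to obtain $M_1^+ \geas M_1$ and $M_2^+ \geas M_2$ brimmed over $M_0$ with $\LWNF(M_0, M_1^+, M_2^+, M_3^+)$ holding. If the brimmed case delivers $\RWNF(M_0, M_1^+, M_2^+, M_3^+)$, then monotonicity and ambient monotonicity of $\RWNF$ recover $\RWNF(M_0, M_1, M_2, M_3)$. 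For the brimmed case itself, Theorem \ref{weak-props} applied with roles reversed produces an $\RWNF$-amalgam $(M_0, \tilde{M}_1, \tilde{M}_2, \tilde{M}_3)$ with $\tilde{M}_\ell$ brimmed over $M_0$; by Fact \ref{brimmed-uq} together with categoricity in $\lambda$ (Hypothesis \ref{global-hyp-3}) we may rename so that $\tilde{M}_\ell = M_\ell$ for $\ell = 1,2$. The crux is then to show that $M_3$ and $\tilde{M}_3$ are compatible over $|M_1| \cup |M_2|$ (modulo passing to a common extension), which would force $M_3$ to satisfy $\RWNF$ as well. I would establish this compatibility by iteratively invoking the frame's axiomatic symmetry, which converts each basic-type $\RWNF$-nonforking witness in $\tilde{M}_3$ into an $\LWNF$-nonforking witness in a suitable extension, and then using $\LWNF$-uniqueness to glue these extensions into a single ambient model; continuity and long transitivity of $\LWNF$ (Theorem \ref{weak-props}) would close the tower argument.

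For successfulness, recall from \cite[III.1.1]{shelahaecbook} that $\s$ is successful precisely when the relation $\NF$ of \cite[\S II.6]{shelahaecbook} is well-defined on $\K_\s$; equivalently, when every basic type is a uniqueness triple in the appropriate sense. The $\LWNF$-uniqueness assumption essentially asserts this condition on $\LWNF$-amalgams, so once $\LWNF = \WNF$ from the symmetry part, $\LWNF$ fulfills the role of $\NF$, making $\s$ successful. The main obstacle throughout is the cross-orientation compatibility step: $\LWNF$-uniqueness as stated directly compares only amalgams satisfying $\LWNF$ in the same orientation, so bridging $\LWNF$ and $\RWNF$ requires combining the frame's axiomatic symmetry (a statement about types rather than amalgams) with a careful tower construction analogous to the proof of existence in Theorem \ref{weak-props}. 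Getting this tower to close is the technical heart of the argument.
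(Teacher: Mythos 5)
Your proposal does not close the central step, and it diverges from the paper's argument in a way that matters. The paper never attempts a direct tower argument for symmetry: it first applies \cite[3.11]{downward-categ-tame-apal} to the pre-frame induced by $\LWNF$ (together with Fact \ref{reflects-lwnf}) to conclude that $\s$ is \emph{weakly successful}, which yields the canonical relation $\NF_{\s}$ of \cite[II.6.1]{shelahaecbook} having existence, uniqueness, \emph{and} symmetry. Since $\NF$ respects $\s$, $\NF$ implies $\LWNF$, and the canonicity theorem \cite[4.1]{bgkv-apal} (one relation has uniqueness, the other has existence, one implies the other) forces $\LWNF = \NF$; symmetry of $\LWNF$ is then inherited from $\NF$. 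Your sketch instead tries to compare an $\LWNF$-amalgam of brimmed models with an $\RWNF$-amalgam directly, but the uniqueness property of Definition \ref{sym-def} only compares two amalgams satisfying the \emph{same} relation over the same $(M_0, M_1, M_2)$, so it cannot be applied across orientations. Your proposed fix --- iteratively invoking the frame's symmetry axiom and ``gluing'' the resulting extensions via uniqueness --- is exactly the hard point: the symmetry axiom produces, for each finite tuple, some \emph{new} model containing $\ba_1$ inside some new ambient extension, with no control relating these to the fixed $M_1$, and no mechanism is supplied for making the tower close. You acknowledge this yourself (``getting this tower to close is the technical heart''), which means the proof is not actually given; and since the paper leaves open whether $\LWNF$ can fail symmetry in a general good frame, this step cannot be treated as soft.

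The successfulness part also has a gap: you identify ``successful'' with ``the relation $\NF$ of \cite[\S II.6]{shelahaecbook} is well-defined'', but that is the definition of \emph{weakly} successful. Successfulness requires an additional condition at $\lambda^+$, which the paper verifies by showing that $M \lea N$ implies $M \leap{\NF} N$ for $M, N \in \K_{\lambda^+}$, using Fact \ref{reflects-lwnf} together with $\LWNF = \NF$. Moreover, deriving weak successfulness (the existence of uniqueness triples) from the uniqueness property of $\LWNF$ is itself a nontrivial theorem --- it is precisely the cited \cite[3.11]{downward-categ-tame-apal} --- not something that ``essentially'' follows from the definitions as you assert.
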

\begin{proof}
  By \cite[3.11]{downward-categ-tame-apal} (used with the pre-$(\le \lambda, \lambda)$-frame induced by $\LWNF$, recalling Fact \ref{reflects-lwnf}) $\s$ is weakly successful. This implies that there is a relation $\NF = \NF_{\s}$ that is a nonforking relation respecting $\s$ (see \cite[II.6.1]{shelahaecbook}, in particular it has all the properties listed in Theorem \ref{weak-props}, as well as uniqueness and symmetry). Now as $\NF$ respects $\s$, we must have that $\NF (M_0, M_1, M_2, M_3)$ implies $\LWNF (M_0, M_1, M_2, M_3)$. Since $\LWNF$ has the uniqueness property and $\NF$ has the existence property, it follows from \cite[4.1]{bgkv-apal} that $\LWNF = \NF$. In particular, $\LWNF$ has the symmetry property.

  To see that $\s$ is successful $\goodp$, it is enough to show that for $M, N \in \K_{\lambda^+}$, $M \lea N$ implies $M \leap{\NF} N$ (where $\leap{\NF}$ is defined as in Definition \ref{leanf-def}). This is immediate from Fact \ref{reflects-lwnf} and $\LWNF = \NF$.
\end{proof}

To prepare for the proof of symmetry in the $\lambda = \aleph_0$ case, we end this section by introducing yet another notion of nonforking amalgamation ($\VWNF$ stands for ``very weak nonforking amalgamation''). In this case, we look at finite sequences both on the left \emph{and} the right hand side. We show that if $\s$ is a good frame, then $\VWNF$ has the symmetry property and locality of types implies that $\VWNF = \LWNF$. Thus in this case $\LWNF$ has the symmetry property too.

\begin{defin} \
  \begin{enumerate}
  \item For $M \leas N$, $B \subseteq |N|$, $\ba \in \fct{<\omega}{N}$, we say that $\tp (\ba, B, N)$ \emph{does not fork over $M$} if there exist $M'$, $N'$ with $N \leas N'$, $M \leas M' \leas N'$, and $B \subseteq |M'|$ such that $\tp (\ba, M', N')$ does not fork over $M_0$.
  \item We define a 4-ary relation $\VWNF_{\s} = \VWNF$ on $\K_{\s}$ by $\VWNF (M_0, M_1, M_2, M_3)$ if and only if $M_0 \leas M_\ell \leas M_3$, $\ell = 1,2$ and for any $\ba \in \fct{<\omega}{M_1}$ and any finite $B \subseteq |M_2|$, if $\tp (\ba, M_0, M_3)$ and $\tp (\ba, M_2, M_3)$ are both basic, then $\tp (\ba, B, M_3)$ does not fork over $M_0$.
  \end{enumerate}
\end{defin}

\begin{thm}\label{sym-tameness} \
  Assume that $\s$ is a type-full good $(<\omega, \lambda)$-frame.
  \begin{enumerate}
  \item $\VWNF$ has the symmetry property: $\VWNF (M_0, M_1, M_2, M_3)$ if and only if $\VWNF (M_0, M_2, M_1, M_3)$.
  \item If for any $M \in \K_{\s}$ and any $p \neq q \in \oSsbs (M)$ there exists $B \subseteq |M|$ finite such that $p \rest B \neq q \rest B$, then $\VWNF = \LWNF$. In particular, $\LWNF$ has the symmetry property.
  \end{enumerate}
\end{thm}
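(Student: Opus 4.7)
The plan is to handle (1) via the symmetry axiom of $\s$, which in a type-full good $(<\omega, \lambda)$-frame is available for finite sequences, and (2) in two directions: $\LWNF \Rightarrow \VWNF$ by monotonicity of nonforking together with downward L\"owenheim--Skolem, and $\VWNF \Rightarrow \LWNF$ under locality by showing the candidate type $\tp(\ba, M_2, M_3)$ agrees with the non-forking extension on every finite restriction to an element of $M_2$.

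For (1), fix $\ba \in \fct{<\omega}{M_2}$ with both $\tp(\ba, M_0, M_3)$ and $\tp(\ba, M_1, M_3)$ basic, and a finite $B \subseteq M_1$. Type-fullness forces $\ba$ to be disjoint from $M_0 \cup M_1$, and by monotonicity we may assume $B \cap M_0 = \emptyset$. Decompose $B = B_1 \sqcup B_2$ with $B_1 := B \cap M_2$ and $B_2 := B \setminus M_2$, enumerated by $\bb_1$ and $\bb_2$. Then $\bb_2 \in \fct{<\omega}{M_1}$ is disjoint from $M_0 \cup M_2$, so applying $\VWNF(M_0, M_1, M_2, M_3)$ with left sequence $\bb_2$ and right finite set $\ba \cup B_1 \subseteq M_2$ produces a witness $M^* \in \K_\lambda$ (with $\ba \cup B_1 \subseteq M^*$) and $N^* \geas M_3$ satisfying $\nfs{M_0}{\bb_2}{M^*}{N^*}$. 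Since $(\ba\bb_1) \cap M_0 = \emptyset$, its type over $M_0$ is basic, so the symmetry axiom of $\s$ produces $M^{**}$ containing $\bb_2$ and $N^{**} \geas N^*$ with $\nfs{M_0}{\ba\bb_1}{M^{**}}{N^{**}}$. A standard use of existence of non-forking extensions together with amalgamation (identifying realizations via uniqueness) then produces $M^{\#} \in \K_\lambda$ with $M^{**} \cup B_1 \subseteq M^{\#}$ and $N^{\#} \geas N^{**}$ satisfying $\nfs{M_0}{\ba\bb_1}{M^{\#}}{N^{\#}}$. By monotonicity $\nfs{M_0}{\ba}{M^{\#}}{N^{\#}}$ holds; since $B = B_1 \cup B_2 \subseteq M^{\#}$ and $M_3 \leas N^{\#}$, this witnesses that $\tp(\ba, B, M_3)$ does not fork over $M_0$, establishing $\VWNF(M_0, M_2, M_1, M_3)$.

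For (2), the direction $\LWNF \Rightarrow \VWNF$ is immediate: given non-forking of $\tp(\ba, M_2, M_3)$ over $M_0$ and a finite $B \subseteq M_2$, downward L\"owenheim--Skolem produces $M^* \in \K_\lambda$ with $M_0 \cup B \subseteq M^* \leas M_2$, and monotonicity gives $\nfs{M_0}{\ba}{M^*}{M_3}$. Conversely, assume $\VWNF(M_0, M_1, M_2, M_3)$ and locality, let $\ba \in \fct{<\omega}{M_1}$ satisfy the basicity hypotheses, set $p := \tp(\ba, M_2, M_3)$, and let $q \in \Sbs(M_2)$ be the unique non-forking extension of $\tp(\ba, M_0, M_3)$. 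If $p \neq q$, locality supplies a finite $B \subseteq M_2$ with $p \rest B \neq q \rest B$ as orbital types over the set $B$. But $\VWNF$ gives $p \rest B = \tp(\ba, B, M_3)$ does not fork over $M_0$, with witness $M^* \in \K_\lambda$ carrying a non-forking extension of $\tp(\ba, M_0, M_3)$; simultaneously, picking $M^{**} \in \K_\lambda$ with $M_0 \cup B \subseteq M^{**} \leas M_2$ makes $q \rest M^{**}$ another non-forking extension. Amalgamating the ambient models into a common ambient model containing both $M^*$ and $M^{**}$ and invoking uniqueness of non-forking extensions over a common model extending both forces the $B$-restrictions of $p$ and $q$ to coincide, a contradiction. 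Hence $p = q$, i.e., $\LWNF(M_0, M_1, M_2, M_3)$. The last assertion, that $\LWNF$ has the symmetry property under locality, follows by combining (1) with $\VWNF = \LWNF$.

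The main obstacle I anticipate is the amalgamation bookkeeping. In (1), the symmetry axiom only guarantees that $M^{**}$ contains $\bb_2$, so enlarging it to contain $B_1$ while preserving non-forking requires careful use of extension, uniqueness, and invariance. In (2), identifying the $B$-restrictions of two a priori different non-forking extensions requires passing to a common amalgam over $M_0 \cup B$ where uniqueness applies. Neither step is deep, but both must be set up with care so that the final orbital-type equality over the set $B$ is genuine.
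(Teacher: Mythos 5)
Your part (2) is essentially the paper's argument: the paper likewise reduces the claim to showing that, for $M \leas N$ and $p \in \oSsbs(N)$, nonforking over $M$ is detected by the finite restrictions of $p$, compares $p$ with the nonforking extension $q$ of $p \rest M$, and concludes $p \rest B = q \rest B$ for every finite $B$ by the uniqueness property of the extended (over-sets) notion of forking before invoking locality. Your amalgamation bookkeeping for identifying the two $B$-restrictions is exactly the content of the uniqueness lemma the paper cites, and the direction $\LWNF \Rightarrow \VWNF$ via monotonicity is as intended.

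Part (1) contains a genuine gap. The paper disposes of (1) with a bare appeal to the symmetry axiom, and that appeal does handle the configuration where $B$ is disjoint from $M_2$ (your case $B_1 = \emptyset$); up to and including the application of the symmetry axiom your argument is correct. The problem is the absorption of $B_1 = B \cap M_2$ into the base. As literally written, producing $M^{\#}$ with $B_1 \subseteq M^{\#}$ and $\nfs{M_0}{\ba\bb_1}{M^{\#}}{N^{\#}}$ is impossible once $B_1 \neq \emptyset$: since $\ran{\bb_1} \subseteq M^{\#}$, the type $\tp(\ba\bb_1, M^{\#}, N^{\#})$ is algebraic, hence not basic, hence cannot be a nonforking type by the ``nonforking types are basic'' axiom. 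The charitable reading --- pass from $\nfs{M_0}{\ba\bb_1}{M^{**}}{N^{**}}$ to $\nfs{M_0}{\ba}{M^{\#}}{N^{\#}}$ for some $M^{\#} \supseteq M^{**} \cup B_1$ --- is a left-transitivity or base-enlargement principle that is not among the axioms of a good $(<\omega,\lambda)$-frame, and by monotonicity it would in particular yield that $\tp(\ba, B_1, M_3)$ does not fork over $M_0$. That is a statement about two finite tuples both lying inside $M_2$, so it cannot be extracted from $\VWNF(M_0, M_1, M_2, M_3)$, whose content only constrains tuples of $M_1$ lying outside $M_2$ against finite subsets of $M_2$. (For a concrete obstruction, consider the first-order good frame for the theory of an equivalence relation with infinitely many infinite classes, with $\ba$ and $B_1$ two representatives of a class represented in $M_1 \cap M_2$ but not in $M_0$.) So the case $B \cap (M_2 \setminus M_0) \neq \emptyset$ is not covered: you correctly isolated the delicate point in your closing paragraph, but no amount of amalgamation bookkeeping will supply the missing independence of $\ba$ from $B_1$ over $M_0$, and the difficulty is one the paper's one-line proof does not engage with either.
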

\begin{proof} \
  \begin{enumerate}
  \item By the symmetry axiom of good frames.
  \item This is observed in \cite[4.5]{indep-aec-apal}. In details, it suffices to show that for $M \leas N$, $p \in \oSsbs (N)$ does not fork over $M$ if and only if $p \rest B$ does not fork over $M$ for all finite $B \subseteq |N|$. Let $q \in \oSsbs (N)$ be the nonforking extension of $p \rest M$. For any finite $B \subseteq |N|$, we have that $q \rest B = p \rest B$, by the uniqueness property for (the extended notion of) forking, see \cite[5.4]{bgkv-apal}. Therefore by the assumption we must have $p = q$, as desired.
  \end{enumerate}
\end{proof}

\section{Building a superlimit}\label{sl-sec}

In this section, we work in $\aleph_0$ and show assuming $\aleph_0$-stability and amalgamation that $\K$ is $\PC_{\aleph_0}$ (Theorem \ref{def-thm}) and has a superlimit (Corollary \ref{sl-cor}).

\begin{hypothesis} 
$\K = (K, \lea)$ is an AEC with $\LS (\K) = \aleph_0$ (and countable vocabulary).
\end{hypothesis}

We will use without comments Fact \ref{brimmed-uq} and Remark \ref{brimmed-uq-rmk}. The essence of it is that since $\lambda = \aleph_0$ all brimmed models have the same length, and hence are isomorphic (and the isomorphism fixes any common base they may have).

First note that if $\K$ is stable and has few models, we can say something about its definability:

\begin{thm}\label{def-thm}
  Assume that $\Ii (\K, \aleph_0) \le \aleph_0$.
  
  \begin{enumerate}
  \item The set $\{M \in K_{\aleph_0}: |M| \subseteq \omega\}$ is Borel.
  \item If $\K$ has amalgamation in $\aleph_0$ and is stable in $\aleph_0$, then the set $\{(M,N):M \lea N$ and $|N| \subseteq \omega\}$ is $\mathbf{\Sigma^1_1}$.
  \end{enumerate}

  In particular if $\K$ has amalgamation in $\aleph_0$ and is stable in $\aleph_0$, then $\K$ is a $\PC_{\aleph_0}$-representable AEC.
\end{thm}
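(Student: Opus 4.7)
For part (1), I would use $\Ii(\K,\aleph_0) \le \aleph_0$ to enumerate the countably many isomorphism classes $[M_k]$, $k<\omega$, of members of $K_{\aleph_0}$. In the Polish space of countable $\tau$-structures with universe a subset of $\omega$, each $[M_k]$ is Borel: since $M_k$ is a countable structure in a countable vocabulary, it has a Scott sentence $\varphi_k \in L_{\omega_1,\omega}$, and by the Lopez--Escobar theorem $L_{\omega_1,\omega}$-definable isomorphism-invariant classes are Borel. The countable union $\bigcup_{k < \omega} \{M : M \models \varphi_k\}$ is then Borel, giving the desired conclusion.

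For part (2), I would first invoke amalgamation and $\aleph_0$-stability (via Fact \ref{brimmed-uq}) to fix a brimmed model $\mathcal M^* \in K_{\aleph_0}$, unique up to isomorphism and universal for $K_{\aleph_0}$. The key characterization I would prove is
\[
M \lea N \ \Longleftrightarrow\ \text{there is a $\K$-embedding } g : N \to \mathcal M^* \text{ whose restriction } g \rest M \text{ is also a $\K$-embedding.}
\]
For the forward direction, universality gives a $\K$-embedding of $N$ into $\mathcal M^*$, and coherence applied to $M \lea N$ together with this embedding shows the restriction is still a $\K$-embedding. For the converse, coherence applied to $g[M] \lea g[N] \lea \mathcal M^*$ inside $\mathcal M^*$ yields $g[M] \lea g[N]$, which transports along the $\tau$-isomorphism $N \cong g[N]$ to $M \lea N$. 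The existence of $g : \omega \to \omega$ is an existential second-order quantification, so the task reduces to showing that ``$g$ is a $\K$-embedding'' is Borel, or at worst $\Sigma^1_1$, in $g$.

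The main obstacle is making ``$A \lea \mathcal M^*$'' definable in $A \subseteq \omega$ without circularity. I would handle this via the Presentation Theorem: it expresses $A \lea B$ as the existence of a $\tau^+$-expansion (with $\tau^+$ countable) of $B$ satisfying a universal first-order theory $T^+$ and omitting a family of quantifier-free types $\Gamma$, with $A$ closed under the Skolem functions of $\tau^+$. In general $|T^+|$ and $|\Gamma|$ can be of size $2^{\aleph_0}$, but $\aleph_0$-stability lets me restrict to the countably many orbital types realized in $K_{\aleph_0}$, cutting $(T^+,\Gamma)$ down to a countable fragment; the existence of the required countable expansion is then a genuine $\Sigma^1_1$ condition.

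For the ``in particular'' clause, observe that $K_{\aleph_0} = \{M : M \lea M\}$ is the diagonal of the $\lea$-relation, hence $\Sigma^1_1$ as soon as $\lea$ is. The standard Vaught/Lopez--Escobar-style equivalence between isomorphism-invariant $\Sigma^1_1$ classes of countable structures in a countable vocabulary and $\PC_{\aleph_0}$-classes then delivers a $\PC_{\aleph_0}$-representation of both $K_{\aleph_0}$ and $\lea$ on it, which is exactly $\PC_{\aleph_0}$-representability of $\K$.
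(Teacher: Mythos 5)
Your part (1) is correct and is essentially the paper's argument (Scott sentences plus $\Ii(\K,\aleph_0)\le\aleph_0$ and a countable union). The problem is part (2). Your reduction of ``$M\lea N$'' to ``there is a $\K$-embedding $g:N\to \mathcal M^*$ with $g\rest M$ a $\K$-embedding'' is a true equivalence (granting joint embedding and no maximal models, which the theorem does not actually assume, so a single universal $\mathcal M^*$ is already not available in general), but it only relocates the difficulty: you must now express ``$g[N]\lea \mathcal M^*$'' as a $\mathbf{\Sigma^1_1}$ condition on $g$, and your proposed fix via the Presentation Theorem does not close this gap. The Presentation Theorem produces a countable $\tau^+$ and $T^+$ but a set $\Gamma$ of \emph{quantifier-free $\tau^+$-types} of size up to $2^{\aleph_0}$; asserting that $(T^+,\Gamma)$ can be cut down to a countable fragment is essentially the $\PC_{\aleph_0}$-representability you are trying to prove, so the argument is circular as written. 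The justification you offer --- countably many orbital types --- does not apply: the types in $\Gamma$ are syntactic types in the expanded vocabulary, whose number is governed by isomorphism types of finitely generated $\tau^+$-structures (i.e.\ pairs $(N,\ba)$ together with coherent systems of $\lea$-submodels), and even $\Ii(\K,\aleph_0)\le\aleph_0$ does not bound the number of automorphism orbits of finite tuples in a fixed countable model, let alone the number of such enriched types.

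The paper avoids this entirely by never trying to define ``being a $\lea$-substructure of a fixed model.'' It introduces \emph{almost brimmed} extensions ($N$ brimmed over $M$, or $N$ maximal), proves they exist over every $M\in\K_{\aleph_0}$ and are unique over $M$ up to isomorphism, fixes representatives $M_n^*$ of the countably many isomorphism types together with almost brimmed $N_n^*$ over $M_n^*$, and then characterizes $M_1\lea M_2$ by: $M_1\subseteq M_2$ and there exist $N$ with $M_1\subseteq M_2\subseteq N$, indices $n_1,n_2$, and isomorphisms $f_\ell:(M_{n_\ell}^*,N_{n_\ell}^*)\cong(M_\ell,N)$. The key point making this work is that an almost brimmed extension of $M_2$ is automatically almost brimmed over $M_1$ as well (using amalgamation), so a single witness $N$ serves both sides; the converse direction is just coherence. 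Since being isomorphic to a fixed countable pair is Borel, the existential quantifier over $(N,f_1,f_2)$ yields $\mathbf{\Sigma^1_1}$ directly. If you want to salvage your route, you would need an independent argument that the set of $\lea$-substructures of $\mathcal M^*$ is $\mathbf{\Sigma^1_1}$; the paper's pair-isomorphism trick is, in effect, exactly such an argument, applied to a freshly quantified ambient model rather than a fixed one.
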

\begin{proof} \
  \begin{enumerate}
  \item Fix $M \in \K_{\aleph_0}$. By Scott's isomorphism theorem, there exists a formula $\phi_M$ of $\Ll_{\aleph_1, \aleph_0} (\tau_{\K})$ such that $N \models \phi_M$ if and only if $M \cong N$. Now observe that the set
    
    $$
    \{N : N \text{ is a } \tau_{\K}\text{-structure with }|N| \subseteq \omega \text{ and } N \models \phi_M\}
    $$

    is Borel and use that $\Ii (\K, \aleph_0) \le \aleph_0$.
  \item For $M, N \in \K_{\aleph_0}$ with $M \lea N$, let us say that $N$ is \emph{almost brimmed over $M$} if either $N$ is brimmed over $M$, or $N$ is $\lea$-maximal. Using amalgamation, it is easy to check that if $N, N'$ are both almost brimmed over $M$, then $N \cong_{M} N'$ (as in Fact \ref{brimmed-uq}, recalling that the chains witnessing the brimmedness must have cofinality $\omega$). Moreover there always exists an almost brimmed model over any $M \in \K_{\aleph_0}$.

    Fix $\seq{M_n^\ast : n < \omega}$ such that for any $M \in \K_{\aleph_0}$ there exists $n < \omega$ such that $M \cong M_n^\ast$ (possible as $\Ii (\K,\aleph_0) \le \aleph_0$). For each $n < \omega$, fix $N_n^\ast \in \K_{\aleph_0}$ almost brimmed over $M_n^\ast$. We have:

    \begin{itemize}
      \item[$\circledast_1$] For $M, N \in \K_{\aleph_0}$:

        \begin{enumerate}
        \item There is $n < \omega$ and an isomorphism $f: M_n^\ast \cong M$.
        \item If $N$ is almost brimmed over $M$, then any such $f$ extends to $g: N_n^\ast \cong N$.
        \end{enumerate}
    \end{itemize}

    \begin{itemize}
    \item[$\circledast_2$]
      For $M_1, M_2 \in \K_{\aleph_0}$, $M_1 \lea M_2$ if and only if $M_1 \subseteq M_2$ and for some $n_1, n_2 < \omega$, for some $(N,f_1,f_2)$ we have: $M_1 \subseteq M_2 \subseteq N$
      and for $\ell = 1,2$, $f_\ell$ is an isomorphism from $(M_{n_\ell}^*,N_{n_\ell}^*)$ onto $(M_\ell,N)$.
    \end{itemize}

[Why?  The implication ``if" holds by the coherence axiom of AECs.  The implication
``only if" holds as there is $N \in \K_{\aleph_0}$ which is almost brimmed over
$M_2$ (and so $M_2 \lea N$) hence $N$ is almost brimmed over $M_1$ 
  and use $\circledast_1$ above.]

The result now follows from $\circledast_2$.
  \end{enumerate}

  By \cite[3.3]{baldwin-larson-iterated}, it follows that $\K$ is $\PC_{\aleph_0}$.
\end{proof}

We now study homogeneous models and show that they coincide with brimmed models. Note that the homogeneity here is with respect to a set $D$ of \emph{orbital} types.

\begin{defin}
  Let $D$ be a set of orbital types over the empty set and let $M \in \K$. We say that $M$ is \emph{$(D, \aleph_0)$-homogeneous} if it realizes all the types in $D$ and whenever $p \in D$ is the type of an $(n + m)$-elements sequence and $\ba \in \fct{n} M$ realizes $p^n$ (the restriction of $p$ to its first $n$ ``variables''), there exists a sequence $\bb \in \fct{m}{M}$ such that $\ba \bb$ realizes $p$.
\end{defin}

The next result characterizes the countable brimmed model in AECs that are nicely stable in $\aleph_0$ (recall Definition \ref{nicely-stable-def}).

\begin{thm}\label{brimmed-homog-equiv}
  Assume that $\K$ is nicely stable in $\aleph_0$ and let $\K^\ast$ be the class of amalgamation bases in $\K_{\aleph_0}$. Let $M \in \K_{\aleph_0}$. The following are equivalent:

  \begin{enumerate}
  \item $M$ is brimmed.
  \item $M$ is $(\oSp{\K^\ast}^{<\omega} (\emptyset), \aleph_0)$-homogeneous (see Definition \ref{7r.13A}(\ref{13A-4})).
  \end{enumerate}
\end{thm}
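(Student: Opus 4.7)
The plan is to prove each direction separately, relying on two facts about countable brimmed models: uniqueness up to isomorphism over a common amalgamation base (Fact~\ref{brimmed-uq}), and the fact that brimmed models (being built as a union of a $\lea$-universal chain of length $\omega$) are themselves amalgamation bases.

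For the forward direction (brimmed implies $(\oSp{\K^\ast}^{<\omega}(\emptyset),\aleph_0)$-homogeneous), suppose $M$ is brimmed. First I show $M$ realizes every $p \in \oSp{\K^\ast}^{<\omega}(\emptyset)$: write $p = \otp_{\K^\ast}(\bb/\emptyset; N)$; since brimmed models are universal in $\K_{\aleph_0}$, pick an embedding $h: N \to M$, and then $h(\bb)$ realizes $p$ (using $M \in \K^\ast$, so $h$ itself witnesses $(\emptyset, N, \bb) \Eat^{\K^\ast} (\emptyset, M, h(\bb))$). For the extension part, suppose $p = \otp_{\K^\ast}(\ba_0 \bb_0/\emptyset; N)$ and $\ba \in \fct{n}{M}$ realizes $p^n$. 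Using density of amalgamation bases and $\LS(\K) = \aleph_0$, pick $M_0 \lea M$ with $M_0 \in \K^\ast$ countable and $\ba \in M_0$. Since $\otp_{\K^\ast}(\ba/\emptyset; M) = \otp_{\K^\ast}(\ba_0/\emptyset; N)$ and all involved models are amalgamation bases, one can amalgamate to obtain $N^\ast \in \K^\ast$ with $f: M \to N^\ast$ and $g: N \to N^\ast$ such that $f(\ba) = g(\ba_0)$. The key step is now to realize $g(\bb_0)$ inside $M$ over $M_0$. For this I claim that $M$ is brimmed over $M_0$: both $M$ and the brimmed model over $M_0$ are $(\aleph_0, \omega)$-brimmed (over some amalgamation base in the case of $M$), so by Fact~\ref{brimmed-uq} and a back-and-forth they are isomorphic over $M_0$. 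Hence $M$ realizes the type $q := f^{-1}\bigl(\otp_{\K^\ast}(g(\bb_0) / f[M_0]; N^\ast)\bigr)$ by some $\bb \in M$. Chasing through the amalgamation, $\otp_{\K^\ast}(\ba \bb /\emptyset; M) = \otp_{\K^\ast}(f(\ba) g(\bb_0)/\emptyset; N^\ast) = p$, as desired.

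For the backward direction (homogeneous implies brimmed), I argue via uniqueness: any two $(\oSp{\K^\ast}^{<\omega}(\emptyset),\aleph_0)$-homogeneous countable models are isomorphic. Given two such models $M_1, M_2$, enumerate $|M_1| = \{c_i^1 : i < \omega\}$ and $|M_2| = \{c_i^2 : i < \omega\}$ and build a back-and-forth $\seq{f_k}$ where $f_k$ is a finite partial map $\ba^1_k \mapsto \ba^2_k$ with $\otp_{\K^\ast}(\ba_k^1/\emptyset; M_1) = \otp_{\K^\ast}(\ba_k^2/\emptyset; M_2)$. To extend by some $b^1 \in M_1$, let $p := \otp_{\K^\ast}(\ba_k^1 b^1/\emptyset; M_1)$, whose restriction to the first $|\ba_k^1|$ coordinates equals the common type of $\ba_k^1$ and $\ba_k^2$; by homogeneity of $M_2$ there is $b^2 \in M_2$ with $\ba_k^2 b^2$ realizing $p$. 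Going back similarly gives the isomorphism $\bigcup_k f_k : M_1 \cong M_2$. Since a brimmed model in $\K_{\aleph_0}$ exists (by nice stability) and is homogeneous by the forward direction, every homogeneous $M$ is isomorphic to the brimmed model, hence brimmed.

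The main obstacle is the \emph{brimmedness-over-arbitrary-base} step in the forward direction: we only know $M$ is brimmed \emph{over some} submodel, but we must use that it is brimmed over the countable amalgamation base $M_0$ containing the given $\ba$. Fact~\ref{brimmed-uq} handles this cleanly because at $\aleph_0$ all brimmed chains automatically have cofinality $\omega$, so a standard back-and-forth identifies $M$ with the brimmed model over $M_0$ fixing $M_0$. A secondary subtlety is keeping track of orbital types computed inside $\K^\ast$ rather than $\K$; this is routine provided one only invokes amalgamations between models that are themselves in $\K^\ast$, which is always the case above.
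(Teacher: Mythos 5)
Your backward direction is correct and is essentially the paper's argument (uniqueness of the countable $(\oSp{\K^\ast}^{<\omega}(\emptyset),\aleph_0)$-homogeneous model by back-and-forth, then transfer from the brimmed model). The problem is in the forward direction, precisely at the step you flag as the main obstacle: the claim that $M$ is brimmed over the countable amalgamation base $M_0 \lea M$ you chose to contain $\ba$. Fact~\ref{brimmed-uq}(1) gives an isomorphism \emph{over} $M_0$ only when \emph{both} models are given as brimmed \emph{over} $M_0$; your $M$ is only known to be brimmed over some other base, so Fact~\ref{brimmed-uq}(2) yields an abstract isomorphism with the brimmed model over $M_0$, not one fixing $M_0$, and the back-and-forth ``fixing $M_0$'' cannot start because you do not know $M$ is universal over $M_0$. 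The claim is in fact false: in algebraically closed fields of characteristic $0$ (a very nicely $\aleph_0$-stable class in which every model is an amalgamation base), $M = \mathbb{Q}(x_0,x_1,\dots)^{\mathrm{alg}}$ is brimmed, yet over its submodel $M_0 = \mathbb{Q}(x_1,x_2,\dots)^{\mathrm{alg}}$ it has transcendence degree $1$, hence is not universal (so not brimmed) over $M_0$. A secondary issue is that density of amalgamation bases produces amalgamation bases \emph{extending} a given model, not submodels of $M$ containing $\ba$, so the existence of your $M_0$ is not justified as stated either.

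The repair is to take $M_0$ from a chain $\seq{N_i : i < \omega}$ witnessing that $M$ is brimmed: since $\ba$ is finite, $\ba \in N_i$ for some $i$, and the tail of the chain shows $M$ is brimmed, in particular universal, over $N_i$; with $M_0 := N_i$ your amalgamation argument then goes through. This is in substance what the paper does, though it packages the forward direction differently: it first shows that in a brimmed $M$ any two finite tuples with the same orbital type over $\emptyset$ (computed in $\K^\ast$) are conjugate by an automorphism of $M$ --- amalgamating over an $N_i$ containing both tuples, absorbing the amalgam into $N_{i+1}$ by universality, and extending by back-and-forth --- and homogeneity is then immediate from this claim together with universality of $M$ in $\K_{\aleph_0}$.
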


\begin{proof}
  Let $M \in \K_{\aleph_0}$. First we show:

  \underline{Claim}: If $M$ is brimmed, $\ba, \bb \in M$, then $\otp_{\K^\ast} (\ba / \emptyset; M) = \otp_{\K^\ast} (\bb / \emptyset; M)$ if and only if there is an automorphism $f$ of $M$ sending $\ba$ to $\bb$. 

  \underline{Proof of Claim}: The right to left direction is clear. Now assume that $\otp_{\K^\ast} (\ba / \emptyset; M) = \otp_{\K^\ast} (\bb / \emptyset; M)$. Say $\seq{M_i : i < \omega}$ witness that $M$ is brimmed, and without loss of generality assume that $\ba, \bb \in M_0$. Then $\otp_{\K^\ast} (\ba / \emptyset; M_0) = \otp_{\K^\ast} (\bb / \emptyset; M_0)$. Since $\K^\ast$ has amalgamation, there exists $M_0' \in \K^\ast$ with $M_0 \lea M_0'$ and $f: M_0 \rightarrow M_0'$ so that $f (\ba) = \bb$. Since $M_1$ is universal over $M_0$, we can assume without loss of generality that $M_0' = M_1$. Now extend $f$ to an automorphism of $M$ using a back and forth argument.

  From the claim, it follows directly that if $M$ is brimmed, then it is $(\oSp{\K^\ast}^{<\omega} (\emptyset), \aleph_0)$-homogeneous. Conversely, the countable $(\oSp{\K^\ast}^{<\omega} (\emptyset), \aleph_0)$-homogeneous model is unique (standard back and forth argument) and so it must also be brimmed.
\end{proof}
\begin{remark}
  By adding constants to the language, we can also characterize brimmed models over $M_0$ as those that are homogeneous for orbital types of finite sequences over $M_0$.
\end{remark}

\begin{cor}\label{sl-cor}
  If $\K$ is nicely stable in $\aleph_0$, then there is a superlimit model of cardinality $\aleph_0$.
\end{cor}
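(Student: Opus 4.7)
My plan is to take $M$ to be the countable brimmed model in $\K_{\aleph_0}$, which exists and is unique up to isomorphism by Fact \ref{brimmed-uq} (every limit $\delta < \aleph_1$ has $\cf{\delta} = \omega$, so the easy case of brimmed uniqueness applies). I then verify the three requirements of Definition \ref{sl-def}. Universality in $\K_{\aleph_0}$ follows from joint embedding: writing $M$ as brimmed over some $M_0$, any $N \in \K_{\aleph_0}$ can be placed together with $M_0$ into a common countable $N' \gea M_0$, and universality of $M$ over $M_0$ then produces $N' \xrightarrow[M_0]{} M$, restricting to an embedding of $N$ into $M$. Non-maximality is immediate: $M$ is brimmed, hence an amalgamation base by clause (6) of Definition \ref{nicely-stable-def}, so existence of universal extensions yields $N \in \K_{\aleph_0}$ with $M \lta N$.

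The main step is closure under increasing unions of isomorphic copies. Let $\seq{M_i : i < \delta}$ be $\lea$-increasing with $M_i \cong M$ for all $i < \delta$, $\delta < \aleph_1$ limit, and set $M_\delta := \bigcup_{i<\delta} M_i$. Letting $D := \oSp{\K^\ast}^{<\omega} (\emptyset)$ where $\K^\ast$ is the class of amalgamation bases in $\K_{\aleph_0}$, Theorem \ref{brimmed-homog-equiv} tells me each $M_i$ is $(D, \aleph_0)$-homogeneous, and my goal is to transfer this to $M_\delta$. Any finite tuple of $M_\delta$ already lies in some $M_i$; given $p \in D$ of length $n + m$ and $\ba \in \fct{n}{M_\delta}$ realizing $p^n$, I pick such an $i$ and apply homogeneity of $M_i$ to obtain $\bb \in \fct{m}{M_i} \subseteq \fct{m}{M_\delta}$ with $\ba\bb$ realizing $p$. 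Invoking Theorem \ref{brimmed-homog-equiv} in the converse direction then makes $M_\delta$ brimmed, and uniqueness forces $M_\delta \cong M$, completing the verification.

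The one delicate point is that ``realizing a type in $D$'' should not depend on which element of $\K^\ast$ houses the tuple: I expect this to follow routinely from amalgamation inside $\K^\ast$ together with density of amalgamation bases, since any two countable ambient models for a tuple $\ba$ can be amalgamated into a common member of $\K^\ast$, showing the $E^{\K^\ast}$-equivalence class of $(\ba, \emptyset, -)$ is insensitive to the ambient model chosen. Once this invariance is in hand, the corollary is essentially a direct application of Theorem \ref{brimmed-homog-equiv}, with the substantive model-theoretic work already done there.
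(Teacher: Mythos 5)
Your proposal is correct and follows essentially the same route as the paper: take the brimmed model, dispose of universality and non-maximality on general grounds, and reduce closure under countable increasing unions to the preservation of $(\oSp{\K^\ast}^{<\omega}(\emptyset), \aleph_0)$-homogeneity via Theorem \ref{brimmed-homog-equiv} in both directions. The extra care you take with universality, non-maximality, and the well-definedness of type realization across ambient models in $\K^\ast$ is sound but routine, exactly as you suspect.
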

\begin{proof}
  Let $M \in \K_{\aleph_0}$ be brimmed (it exists by nice stability in $\aleph_0$). We claim that $M$ is superlimit. To see this, we check the conditions of Definition \ref{sl-def}. On general grounds, brimmed models are universal in $\K_{\aleph_0}$, are not maximal (from the definition of nice stability), and there is a unique brimmed model of cardinality $\aleph_0$. Still, it is not obvious that if $\seq{M_i : i < \delta}$ is an increasing chain of brimmed models in $\K_{\aleph_0}$ and $\delta < \omega_1$, then $\bigcup_{i < \delta} M_i$ is brimmed. To see this, we use Theorem \ref{brimmed-homog-equiv}: each $M_i$ is ($(\oSp{\K^\ast}^{<\omega} (\emptyset), \aleph_0)$-homogeneous, and it is clear from the definition that an increasing union of such homogeneous models is homogeneous. Thus $M_\delta$ is $(\oSp{\K^\ast}^{<\omega} (\emptyset), \aleph_0)$-homogeneous. By Theorem \ref{brimmed-homog-equiv} again, $M_\delta$ is brimmed, as desired.
 \end{proof}

We have justified assuming amalgamation in the following sense:

\begin{cor}\label{ap-just}
  If $\K$ is nicely stable in $\aleph_0$, then there exists an AEC $\K' = (K', \leap{\K'})$ such that:

  \begin{enumerate}
  \item\label{p1} $\LS (\K') = \aleph_0$.
  \item $\K'_{<\aleph_0} = \emptyset$.
  \item $\tau_{\K'} = \tau_{\K}$.
  \item $K' \subseteq K$ and for $M, N \in K'$, $M \leap{\K'} N$ if and only if $M \lea N$.
  \item\label{p5} For any $M \in \K_{\aleph_0}$ there exists $M' \in \K_{\aleph_0}'$ with $M \lea M'$.
  \item\label{p6} $\K'$ is categorical in $\aleph_0$.
  \item\label{p7} $\K'$ is very nicely stable in $\aleph_0$. In particular it has amalgamation in $\aleph_0$.
  \item\label{p9} $\K'$ is $\PC_{\aleph_0}$.
  \end{enumerate}
\end{cor}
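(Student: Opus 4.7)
The natural approach is to define $\K'$ as the sub-AEC ``generated'' by the superlimit model of Corollary \ref{sl-cor}. Let $M^\ast \in \K_{\aleph_0}$ be that superlimit. I would set
$$K'_{\aleph_0} := \{M \in K_{\aleph_0} : M \cong M^\ast\},$$
with $\leap{\K'}$ equal to the restriction of $\lea$ to $K'_{\aleph_0} \times K'_{\aleph_0}$. Because $M^\ast$ is superlimit, $K'_{\aleph_0}$ is closed under unions of $\lea$-increasing chains of length $< \omega_1$ (each member of such a chain is isomorphic to $M^\ast$, and Definition \ref{sl-def} says the union is too). Coherence and closure under isomorphism are inherited from $\K$. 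Hence $(K'_{\aleph_0}, \leap{\K'})$ forms an ``AEC in $\aleph_0$'' in the sense of \cite[Chapter II, \S 1]{shelahaecbook}, and extends uniquely to a full AEC $\K'$ with $\LS(\K') = \aleph_0$ (where $M \in K'_\lambda$ for $\lambda > \aleph_0$ is exactly a $\lea$-union of a continuous increasing chain from $K'_{\aleph_0}$). Properties (\ref{p1})--(\ref{p6}) are immediate from this construction.

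\textbf{Property (\ref{p5}).} Since $M^\ast$ is superlimit, it is universal in $\K_{\aleph_0}$. Given $M \in \K_{\aleph_0}$, pick a $\K$-embedding $f: M \to M^\ast$; then $f[M] \lea M^\ast$, and by renaming we obtain $M' \cong M^\ast$ with $M \lea M'$, i.e.\ $M' \in K'_{\aleph_0}$ extends $M$.

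\textbf{Property (\ref{p7}).} By Remark \ref{vnice-rmk}, it suffices to verify amalgamation, joint embedding, no maximal models, and stability, all in $\aleph_0$. Joint embedding is trivial since $K'_{\aleph_0}$ is categorical. For no maximal models, $M^\ast$ is not maximal in $\K$ (by the definition of superlimit), so pick $N \in \K_{\aleph_0}$ with $M^\ast \lta N$ and apply (\ref{p5}) to get $N' \in K'_{\aleph_0}$ with $N \lea N'$, hence $M^\ast \lta N'$. For amalgamation, given $M \leap{\K'} N_1, N_2$ in $K'_{\aleph_0}$, note $M \cong M^\ast$ is brimmed, so by Definition \ref{nicely-stable-def}(6) $M$ is an amalgamation base in $\K_{\aleph_0}$; amalgamate inside $\K$ to get $N \in \K_{\aleph_0}$ with $\K$-embeddings $f_\ell : N_\ell \xrightarrow[M]{} N$, then apply (\ref{p5}) to extend $N$ to some $N' \in K'_{\aleph_0}$. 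For stability, use (\ref{p5}) and amalgamation to show that for $M \in K'_{\aleph_0}$, any element $a$ realized in an extension of $M$ inside $\K$ is equivalent (under $\Eat$) to some element realized in an extension inside $\K'$; this gives a surjection $\oS_{\K}(M) \to \oS_{\K'}(M)$, so $|\oS_{\K'}(M)| \le |\oS_{\K}(M)| \le \aleph_0$ by stability of $\K$.

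\textbf{Property (\ref{p9}) and main obstacle.} With (\ref{p6}) and (\ref{p7}) in hand, $\K'$ is $\aleph_0$-categorical (so $\Ii(\K', \aleph_0) = 1$), has amalgamation in $\aleph_0$, and is stable in $\aleph_0$. Theorem \ref{def-thm} applied to $\K'$ then yields (\ref{p9}). The most delicate conceptual point is verifying the chain axiom for $\K'$ in $\aleph_0$, which is \emph{exactly} the superlimit property of $M^\ast$ (supplied by Corollary \ref{sl-cor}); the remaining axioms and the lift to uncountable cardinalities are routine applications of the standard AEC machinery, and the stability transfer from $\K$ to $\K'$ relies crucially on property (\ref{p5}) to ensure that orbital types computed in $\K'$ correspond to orbital types computed in $\K$.
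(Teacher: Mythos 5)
Your proposal is correct and takes essentially the same route as the paper's proof: restrict to the isomorphism class of the superlimit model supplied by Corollary \ref{sl-cor}, generate the AEC from this countable part, deduce amalgamation from nice stability together with $\aleph_0$-categoricity, and obtain $\PC_{\aleph_0}$ by applying Theorem \ref{def-thm} to $\K'$. You simply spell out the routine verifications (universality for (\ref{p5}), the transfer of stability via the type correspondence, and the chain axiom as the superlimit property) that the paper leaves implicit.
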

\begin{proof}
  Let $M \in \K_{\aleph_0}$ be superlimit (exists by Corollary \ref{sl-cor}). Let $K_{\aleph_0}' := \{N \in K : N \cong M\}$. Now let $\K'$ be the AEC generated by $(K_{\aleph_0}', \lea)$ (in the sense of \cite[II.23]{shelahaecbook}). One can easily check that $\K'$ is nicely stable in $\aleph_0$; from this and $\aleph_0$-categoricity we get amalgamation in $\aleph_0$, hence (\ref{p7}) holds. As for (\ref{p9}), it follows from Theorem \ref{def-thm}.
\end{proof}

\section{Building a good $\aleph_0$-frame}\label{building-aleph0-frame}

The aim of this section is to build a good $\aleph_0$-frame from nice $\aleph_0$-stability. By Corollary \ref{ap-just}, we may restrict the class to a superlimit so that it is categorical in $\aleph_0$. As before, we assume:

\begin{hypothesis} 
$\K = (K, \lea)$ is an AEC with $\LS (\K) = \aleph_0$ (and countable vocabulary).
\end{hypothesis}

The nonforking relation of the frame will be nonsplitting:

\begin{defin}\label{split-def}
  For $M \in \K_{\aleph_0}$ and $A \subseteq |M|$, $p \in \oS^{<\omega} (M)$ \emph{splits over $A$} if whenever $p = \otp (\ba / M; N)$, there exists $\bb_1, \bb_2 \in M$ such that $\otp (\bb_1 / A; M) = \otp (\bb_1 / A; M)$ but $\otp (\ba \bb_1 / A; N) \neq \otp (\ba \bb_2 / A; N)$.
\end{defin}

The following is proven in \cite[I.5.6]{shelahaecbook}. Similar proofs appear in \cite[3.16]{finitary-aec} or \cite[4.2]{quasimin-five}.

\begin{fact}\label{splitting-thm}
  Assume that $\K$ is nicely stable in $\aleph_0$ and categorical in $\aleph_0$. If $M \in \K_{\aleph_0}$ and $p \in \oS^{<\omega} (M)$, then there exists $A \subseteq |M|$ finite such that $p$ does not split over $A$.
\end{fact}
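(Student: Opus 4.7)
The plan is proof by contradiction: assuming $p \in \oS^{<\omega}(M)$ splits over every finite $A \subseteq |M|$, construct $2^{\aleph_0}$ pairwise distinct orbital types over a single countable submodel of $M$, contradicting $\aleph_0$-stability.

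First, fix $(N, \ba)$ with $p = \otp(\ba / M; N)$ and $M \lea N$; by density of amalgamation bases we may assume $N \in \K_{\aleph_0}$ is an amalgamation base. Enumerate $|M| = \{c_n : n < \omega\}$ and set $A_n := \{c_0, \ldots, c_{n-1}\}$. For each $n$, the splitting hypothesis yields witnesses $\bb_n^0, \bb_n^1 \in M$ with $\otp(\bb_n^0 / A_n; M) = \otp(\bb_n^1 / A_n; M)$ but $\otp(\ba \bb_n^0 / A_n; N) \neq \otp(\ba \bb_n^1 / A_n; N)$.

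Next, build by induction on $n < \omega$ a coherent tree of amalgamation bases $N_\eta \in \K_{\aleph_0}$ ($\eta \in 2^{<\omega}$) with distinguished tuples $\ba_\eta \in N_\eta$, together with an increasing chain of countable submodels $M_n \lea M$ containing $A_n$ and all splitting witnesses up to level $n$, so that: $N_\eta \lea N_\rho$ for $\eta \tleq \rho$, $M_n \lea N_\eta$ whenever the length of $\eta$ is at least $n$, and at each level $n$ the two branches satisfy
\[
\otp(\ba_{\eta \frown 0} / M_{n+1}; N_{\eta \frown 0}) \neq \otp(\ba_{\eta \frown 1} / M_{n+1}; N_{\eta \frown 1}).
\]
The successor step is the crux: using that $N_\eta$ is an amalgamation base and that $\bb_n^0, \bb_n^1$ share an orbital type over $A_n$ in $M$, amalgamate $N_\eta$ with a suitable copy of itself that internalizes the identification $\bb_n^0 \leftrightarrow \bb_n^1$; the two images of $\ba_\eta$ inside this amalgam, combined with $\otp(\ba \bb_n^0 / A_n; N) \neq \otp(\ba \bb_n^1 / A_n; N)$, yield the required inequality.

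Finally, set $M^* := \bigcup_{n < \omega} M_n \lea M$, a countable submodel. For each $\eta \in 2^\omega$ take a directed limit along $\seq{N_{\eta \rest n} : n < \omega}$ to obtain $(N_\eta^*, \ba_\eta^*)$ with $M^* \lea N_\eta^*$. When $\eta \neq \eta'$ first diverge at level $n$, the inductive inequality over $M_{n+1}$ propagates (via $M_{n+1} \lea M^*$ and the amalgamation-base property) to a difference of orbital types over $M^*$, so $\{\otp(\ba_\eta^* / M^*; N_\eta^*) : \eta \in 2^\omega\}$ exhibits $2^{\aleph_0}$ distinct elements of $\oS^{<\omega}(M^*)$, contradicting $\aleph_0$-stability (which, under nice stability, extends to $|\oS^{<\omega}(M^*)| \leq \aleph_0$ by a standard argument). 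The main obstacle is the successor step: leveraging the amalgamation-base assumption to internalize the identification $\bb_n^0 \leftrightarrow \bb_n^1$ inside a single model of the class, so that the local splitting data at stage $n$ translates into a genuine difference of orbital types over the growing countable base $M_{n+1}$.
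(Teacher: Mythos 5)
The paper does not actually prove this statement: it is quoted as a known fact from \cite[I.5.6]{shelahaecbook}, with variants in the finitary AEC literature. The known proof is indeed the tree-of-splitting-witnesses argument you outline, so your overall strategy is the right one, and your final counting step (including the reduction of stability for $\oS^{<\omega}$ to stability for $1$-types) is fine. However, the successor step --- which you correctly single out as the crux --- has a genuine gap, and it comes from running the tree over an increasing chain of \emph{countable} submodels $M_n \lea M$ rather than over the finite sets $A_n$.

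The problem is this: the splitting witnesses satisfy $\otp(\bb_n^0 / A_n; M) = \otp(\bb_n^1 / A_n; M)$ only over the \emph{finite} set $A_n$. Any map ``internalizing the identification $\bb_n^0 \leftrightarrow \bb_n^1$'' can therefore only be required to fix $A_n$ pointwise; nothing guarantees the witnesses have the same orbital type over the countable model $M_n$, and in general they will not. Consequently the ``twisted'' type you want $\ba_{\eta \frown 1}$ to realize over $M_{n+1}$ --- the one saying about $\bb_n^1$ what $p$ says about $\bb_n^0$ --- need not extend $\otp(\ba_\eta / M_n; N_\eta)$ at all, and the amalgam of $N_\eta$ with $M_{n+1}$ over $M_n$ realizing it need not exist. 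Splitting over $A_n$ hands you two distinct extensions of $p \rest A_n$ to the finite set $A_n \cup \bb_n^0 \cup \bb_n^1$; it does \emph{not} hand you two distinct extensions of $p \rest M_n$ to $\oS^{<\omega}(M_{n+1})$, which is what your inductive requirement $\otp(\ba_{\eta \frown 0} / M_{n+1}; N_{\eta \frown 0}) \neq \otp(\ba_{\eta \frown 1} / M_{n+1}; N_{\eta \frown 1})$ demands. The standard repair is to carry only finite data up the tree: set $A_{n+1} \supseteq A_n \cup \bb_n^0 \cup \bb_n^1$ and build a coherent tree of finite partial $\K$-embeddings $h_\eta$ with domain $A_{|\eta|}$, satisfying $h_{\eta \frown 0} \rest A_n = h_{\eta \frown 1} \rest A_n$ and $h_{\eta \frown 0}(\bb_n^0) = h_{\eta \frown 1}(\bb_n^1)$, with all images inside one countable model $M^\ast$ (by $\aleph_0$-categoricity the unique countable model is brimmed, hence universal enough). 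Taking a direct limit along each branch produces a copy of $(M, N, \ba)$ over $M^\ast$, and the resulting $2^{\aleph_0}$ types over $M^\ast$ are pairwise distinct because, at the first divergence level $n$, their restrictions to the \emph{finite} set $h_\eta[A_n] \cup \{h_{\eta \frown 0}(\bb_n^0)\}$ are the images of $\otp(\ba \bb_n^0 / A_n; N) \neq \otp(\ba \bb_n^1 / A_n; N)$ under maps agreeing on $A_n$ and sending $\bb_n^0$ and $\bb_n^1$ to the same element. In short: one must move the base and keep the type, rather than keep the base $M_n$ fixed inside $M$ and move the realizations.
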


The following result about nonsplitting will also come in handy. It appears in various forms in the literature, see e.g.\ \cite[4.8]{bv-sat-afml}.

\begin{lem}[Weak uniqueness]\label{uq-lem}
  Assume that $\K$ is nicely stable in $\aleph_0$ and categorical in $\aleph_0$. Let $M \lea N$ both be in $\K_{\aleph_0}$, $p,q \in \oS^{<\omega} (N)$. If both $p$ and $q$ do not split over a finite subset of $M$ and $p \rest A = q \rest A$ for all finite $A \subseteq |M|$, then $p \rest B = q \rest B$ for all finite $B \subseteq |N|$.
\end{lem}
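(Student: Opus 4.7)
The plan is to push the finite set $B \subseteq |N|$ to a surrogate $B^{**} \subseteq |M|$ with the same orbital type over $A_0$, apply the hypothesis on $M$ to this surrogate, and pull back via nonsplitting. As preparation, I first note that nice $\aleph_0$-stability plus $\aleph_0$-categoricity promotes $\K$ to very nice $\aleph_0$-stability: the unique countable model $M$ is brimmed, hence an amalgamation base, so by categoricity every model in $\K_{\aleph_0}$ is. Theorem~\ref{brimmed-homog-equiv} together with the claim used in its proof then yields that $M$ is universal in $\K_{\aleph_0}$ and $(\oS^{<\omega}(\emptyset), \aleph_0)$-homogeneous; in particular, any two finite tuples in $M$ with the same orbital type over $\emptyset$ are related by an automorphism of $M$.

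Pick a finite $A_0 \subseteq |M|$ over which both $p$ and $q$ are nonsplitting. Since $p \rest B$ is determined by $p \rest B'$ for any $B \subseteq B' \subseteq |N|$, I may enlarge $B$ and assume $A_0 \subseteq B$. By universality of $M$ I would pick a $\K$-embedding $g : N \to M$; composing with an automorphism of $M$ (using homogeneity applied to $g(A_0)$ and $A_0$, which share the same orbital type over $\emptyset$), I arrange $g \rest A_0 = \id_{A_0}$. Setting $B^{**} := g(B) \subseteq |M|$, the map $g$ witnesses $\otp(B^{**}/A_0; M) = \otp(B/A_0; N)$, and this equality persists on passing to the larger model $N$.

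Using amalgamation in $\aleph_0$, fix $N^* \gea N$ in $\K_{\aleph_0}$ containing realizations $\ba \models p$ and $\ba' \models q$. Nonsplitting of $p$ over $A_0$, applied to the tuples $B$ and $B^{**}$ of $N$ (which have equal orbital type over $A_0$ in $N$), gives
\[
\otp(\ba\, B / A_0; N^*) = \otp(\ba\, B^{**} / A_0; N^*),
\]
and the symmetric use of nonsplitting of $q$ yields $\otp(\ba'\, B^{**} / A_0; N^*) = \otp(\ba'\, B / A_0; N^*)$. The hypothesis applied to the finite set $A_0 \cup B^{**} \subseteq |M|$ supplies the middle link $\otp(\ba\, B^{**} / A_0; N^*) = \otp(\ba'\, B^{**} / A_0; N^*)$. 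Concatenating these three gives $\otp(\ba\, B / A_0; N^*) = \otp(\ba'\, B / A_0; N^*)$. Because the ``$B$-parts'' of the tuples $\ba B$ and $\ba' B$ are literally equal, a standard renaming of the amalgamation witness (to make one leg an inclusion) converts the last equality into $\otp(\ba / B; N^*) = \otp(\ba' / B; N^*)$, which is exactly $p \rest B = q \rest B$.

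The delicate point is producing the surrogate $B^{**}$: this relies genuinely on the homogeneity of the countable brimmed model established in Theorem~\ref{brimmed-homog-equiv}, a property set up in Section~\ref{sl-sec} precisely to enable this kind of transport. Once $B^{**}$ is in hand, the rest is a routine nonsplitting-bridge chaining of three equalities of finite-tuple orbital types.
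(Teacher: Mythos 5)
Your proposal is correct and follows essentially the same route as the paper's proof: find a copy of the finite tuple $B$ inside $M$ with the same orbital type over the finite nonsplitting base $A_0$, then chain three equalities of types of $\ba B$, $\ba B^{**}$, $\ba' B^{**}$, $\ba' B$ over $A_0$, using nonsplitting of $p$ and of $q$ for the outer two links and the hypothesis on finite subsets of $M$ for the middle one. The only cosmetic difference is that you produce the surrogate tuple via universality of $M$ plus an automorphism fixing $A_0$, where the paper simply invokes that $M$ is brimmed (hence realizes the relevant type over $A_0$), and you work in an amalgam $N^\ast$ rather than a model brimmed over $N$.
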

\begin{proof}
  Let $N'$ be brimmed over $N$. Let $\bb_1, \bb_2 \in N'$ realize $p$ and $q$ respectively. Fix $A \subseteq |M|$ finite such that $p$ and $q$ do not split over $A$. Let $B \subseteq |N|$ be finite and let $\bb$ be an enumeration of $B$. Since $M$ is brimmed, there exists $\bb' \in M$ such that $\otp (\bb / A; N) = \otp (\bb' / A; N)$. By nonsplitting, $\otp (\bb_\ell \bb/ A; N') = \otp (\bb_\ell \bb' / A; N')$ for $\ell = 1,2$. Now since $\bb' \in M$ we have by assumption that $p \rest A \bb' = q \rest A \bb'$. Therefore $\otp (\bb_1 \bb' / A; N') = \otp (\bb_2 \bb' / A; N')$. Putting these equalities together, $\otp (\bb_1 \bb / A; N') = \otp (\bb_2 \bb / A; N')$, so $p \rest B = q \rest B$, as desired.
\end{proof}

\begin{defin}\label{gs-defin}
  Let $\K$ be nicely stable in $\aleph_0$ and categorical in $\aleph_0$. We define a pre-$(<\omega, \lambda)$-frame $\gs = (\K^{\s}, \nf,  \oSs^{\bs})$ by:

  \begin{enumerate}
  \item $\K^{\s} = \K$.
  \item For $M_0 \lea M \lea N$ all in $\K_{\aleph_0}$, $n < \omega$, $\ba = \seq{a_i :i < n} \in \fct{n}{N}$, $\nfs{M_0}{\ba}{M}{N}$ holds if and only if $a_i \notin M$ for all $i < n$ and there exists a finite $A \subseteq |M_0|$ so that $\otp (\ba / M; N)$ does not split over $A$.
  \item For $M \in \K_{\aleph_0}$, $\oSp{\gs}^{\bs} (M)$ is the set of all types of finite sequences $\seq{a_i : i < n}$ over $M$ such that for all $i < n$, $a_i \notin M$.

  \end{enumerate}
\end{defin}

In order to prove that $\s$ is a good $\aleph_0$-frame, we will make an additional locality hypothesis. See Example \ref{locality-example} and the next section for setups where it holds.

\begin{defin}\label{loc-def}
  $\K$ is \emph{$(<\aleph_0, \aleph_0)$-local} if for any $M \in \K$  $p, q \in \oS^{<\omega} (M)$, $p \rest A = q \rest A$ for all finite $A \subseteq |M|$ implies $p = q$. We say that $\K$ is \emph{weakly $(<\aleph_0, \aleph_0)$-local} if this holds for a superlimit $M$.
\end{defin}
\begin{remark}
  The definition of locality includes types of any finite length, not just of length one. This will be used to prove the symmetry property of $\LWNF_{\s}$, via Theorem \ref{sym-tameness}.
\end{remark}

We now prove, assuming nice stability, categoricity, and locality, that the pre-frame defined above is a good $\aleph_0$-frame.

\begin{thm}\label{good-frame-thm}
  Assume that $\K$ is nicely stable in $\aleph_0$ and categorical in $\aleph_0$. If $\K$ is $(<\aleph_0, \aleph_0)$-local, then $\gs$ (Definition \ref{gs-defin}) is a type-full good $(<\omega, \aleph_0)$-frame. Moreover $\LWNF_{\s}$ has the symmetry property (recall Definitions \ref{wnf-def} and \ref{sym-def}). In particular, $\s$ is $\goodp$.
\end{thm}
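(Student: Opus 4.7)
The plan is to verify the axioms of a type-full good $(<\omega, \aleph_0)$-frame for $\gs$, establish symmetry separately using the locality hypothesis, and then read off $\LWNF_{\s}$ symmetry and $\goodp$ from the results of Section \ref{good-frame-sec}.

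First I would check every axiom except symmetry. Invariance, monotonicity, and ``nonforking types are basic'' are immediate from Definition \ref{gs-defin}. Amalgamation, joint embedding, and no maximal models in $\aleph_0$ come from very nice stability (Remark \ref{vnice-rmk}); $\aleph_0$-stability gives bs-stability, and density of basic types follows because $\K$ has no maximal models (so any proper extension contributes a nonalgebraic element). For existence of nonforking extensions, I would use Fact \ref{splitting-thm} to obtain a finite splitting witness for $p$ and then realize the appropriate extension over a universal extension provided by nice stability. The uniqueness axiom is the first real use of locality: Lemma \ref{uq-lem} gives equality on all finite restrictions, and $(<\aleph_0, \aleph_0)$-locality upgrades this to full equality. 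Local character is immediate (a finite splitting base lies in some initial segment of any chain), and continuity and transitivity follow from uniqueness by now-standard arguments.

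At this point $\gs$ satisfies all the hypotheses of Hypothesis \ref{global-hyp-3}, so the whole apparatus of $\LWNF_{\s}$, $\RWNF_{\s}$, $\VWNF_{\s}$ from Section \ref{good-frame-sec} applies. The plan for symmetry is to route through $\VWNF_{\s}$. Using the uniqueness axiom already established and the locality hypothesis, the argument in the proof of Theorem \ref{sym-tameness}(2) shows that nonforking has finite character: for $M \leas N$ and $p \in \oSsbs(N)$, $p$ does not fork over $M$ iff $p \rest B$ does not fork over $M$ for every finite $B \subseteq |N|$. (This step does not invoke the frame symmetry axiom, only uniqueness.) It follows that $\LWNF_{\s} = \VWNF_{\s}$. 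Next I would prove symmetry of $\VWNF_{\s}$ directly from nonsplitting, at the level of finite tuples: given two finite $\ba$ and $\bb$ with the required basicness, starting from the hypothesis that $\tp(\ba, M_0\bb, N)$ does not fork over $M_0$, I would produce a brimmed extension of $M_0$ containing $\ba$, extend $\tp(\bb, M_0, N)$ to a nonforking extension over this brimmed model using the existence property, and use uniqueness of nonforking extensions (from locality) to identify this nonforking extension with $\tp(\bb, M_0\ba, N)$ after amalgamation. From $\VWNF_{\s}$ symmetry and $\LWNF_{\s} = \VWNF_{\s}$, we get $\LWNF_{\s}$ symmetry. Frame symmetry then follows by a brimmed amalgamation argument: given $\nfs{M_0}{\ba_1}{M_2}{N}$ and $\ba_2 \in M_2$ basic over $M_0$, take a brimmed $M_1^\ast \supseteq M_0$ realizing $\tp(\ba_1, M_0, N)$ via some $\ba_1^\ast$, produce an $\LWNF_{\s}$-amalgam of $M_1^\ast$ and $M_2$ over $M_0$ via Theorem \ref{weak-props}(6), apply $\LWNF_{\s} = \RWNF_{\s}$ to move $\ba_2$ to the left, and transport the witness back along an isomorphism matching $\ba_1^\ast$ to $\ba_1$ (available by $\aleph_0$-categoricity and back-and-forth).

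With the frame fully verified as type-full good, Theorem \ref{sym-tameness}(2) now applies cleanly and confirms the moreover statement that $\LWNF_{\s}$ has the symmetry property. The final assertion that $\s$ is $\goodp$ is then the implication $(\ref{equiv-0}) \Rightarrow (\ref{equiv-1})$ of Theorem \ref{sym-equiv}. The hard part of the whole argument is the direct proof of $\VWNF_{\s}$ symmetry without access to the frame symmetry axiom, which is precisely why we are forced to assume $(<\aleph_0, \aleph_0)$-locality for types of arbitrary finite length, not only singletons.
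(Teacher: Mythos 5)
Your verification of the non-symmetry axioms matches the paper's (the paper likewise handles uniqueness via Lemma \ref{uq-lem} plus locality, local character via Fact \ref{splitting-thm}, and extension on general grounds). The problem is the symmetry axiom, which is the entire content of the theorem, and your route to it has a genuine gap. You propose to prove symmetry of $\VWNF_{\s}$ ``directly from nonsplitting'': starting from $\nfs{M_0}{\ba}{M_0\bb}{N}$, you take a nonforking extension of $\tp(\bb, M_0, N)$ over a brimmed model containing $\ba$ and then ``use uniqueness of nonforking extensions to identify this nonforking extension with $\tp(\bb, M_0\ba, N)$ after amalgamation.'' But uniqueness only identifies two extensions that are \emph{both already known} not to fork over $M_0$; to apply it here you would need to know that $\tp(\bb, M_1, N')$ does not fork over $M_0$ for some model $M_1$ containing $\ba$ --- which is precisely the symmetry statement you are trying to prove. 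The step is circular. Note also that in the paper, Theorem \ref{sym-tameness}(1) derives $\VWNF_{\s}$-symmetry \emph{from} the frame symmetry axiom, so routing through $\VWNF_{\s}$ cannot be used to establish that axiom; there is no soft argument deducing symmetry from uniqueness, extension and local character alone.

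The paper's actual proof of symmetry is a substantive non-structure argument in the style of \cite[I.5.30]{shelahaecbook}: assume symmetry fails for a configuration $\bb, \bc$, use the existence property of $\LWNF_{\s}$ from Theorem \ref{weak-props} (proved in Section \ref{good-frame-sec} \emph{without} assuming the symmetry axiom, which is why Hypothesis \ref{global-hyp-3} drops it) to arrange $\LWNF_{\s}(N_0,N_1,N_2,N_3)$ with suitable brimmedness, then build a $([0,\infty)\cap\mathbb{Q})$-indexed increasing chain $\seq{M_s : s \in I}$ with each $M_t$ brimmed over $M_s$ for $s<t$ (this uses Scott sentences and Keisler's theorem on $\Ll_{\omega_1,\omega}$), and finally show that distinct cuts $J$ of $I$ yield distinct types $p_J$ over a countable model. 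This produces $2^{\aleph_0}$ types, contradicting $\aleph_0$-stability. Your proposal contains no analogue of this argument, and the circular step above cannot substitute for it; so as written the proof does not go through. (Your final reductions --- $\LWNF_{\s}$ symmetry from Theorem \ref{sym-tameness} once the frame is good, and $\goodp$ from Theorem \ref{sym-equiv} --- are exactly what the paper does and are fine.)
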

\begin{proof}
  Once we have shown that $\s$ is a type-full good frame, the moreover part follows from Theorem \ref{sym-tameness}. The last sentence is by Theorem \ref{sym-equiv}.

  Now except for symmetry, the axioms of good frames are easy to check (see the proof of \cite[II.3.4]{shelahaecbook}). For example:

  \begin{itemize}
  \item Local character: Let $\seq{M_i : i \le \delta}$ be increasing continuous in $\K_{\gs}$. Let $p \in \oSs^{\bs} (M_\delta)$. By Fact \ref{splitting-thm}, there exists a finite $A \subseteq |M_\delta|$ such that $p$ does not split over $A$. Pick $i < \delta$ such that $A \subseteq |M_i|$. Then $p$ does not fork over $M_i$.
  \item Uniqueness: by Lemma \ref{uq-lem} and locality.
  \item Extension: follows on general grounds, see \cite[3.5]{stab-spec-v5}.
  \end{itemize}
  
  Symmetry is the hardest to prove, and is done as in \cite[I.5.30]{shelahaecbook}. We give a full proof for the convenience of the reader.

  Suppose that $\tp (\bb, N_2, N_3)$ does not fork over $N_0$ and let $\bc \in \fct{<\omega}{N_2 \backslash N_1}$. We want to find $N_1, N_3'$ such that $N_0 \leas N_1 \leas N_3'$, $N_3 \leas N_3'$, $\bb \in \fct{<\omega}{N_1}$ and $\tp (\bc, N_1, N_3')$ does not fork over $N_0$. Assume for a contradiction that there is no such $N_1$. Using existence for $\LWNF_{\s}$ (see Theorem \ref{weak-props}), as well as the extension property for nonforking, we can increase $N_2$ and $N_3$ if necessary and find $N_1$ such that $\LWNF_{\s} (N_0, N_1, N_2, N_3)$, $N_\ell$ is brimmed over $N_0$, and $N_3$ is brimmed over $N_\ell$ for $\ell = 1,2$. By assumption, $p := \tp (\bc, N_1, N_3)$ forks over $N_0$.

  \underline{Claim 1}: Let $I$ be the linear order $[0, \infty) \cap \mathbb{Q}$. There exists an increasing chain $\seq{M_s : s \in I}$ such that for any $s < t$ in $I$, $M_s, M_t$ are in $\K_{\aleph_0}$ and $M_t$ is brimmed over $M_s$.

    \underline{Proof of Claim 1}: Let $\phi \in \Ll_{\omega_1, \omega}$ be a Scott sentence for the model in $\K_{\aleph_0}$. Let $\psi \in \Ll_{\omega_1, \omega}$ be a Scott sentence for a pair $M, N \in \K_{\aleph_0}$ such that $N$ is brimmed over $M$. Now let $K^\ast$ be the class of sequences $\seq{M_j : j \in J}$ such that $J$ is a linear order, $M_j \models \phi$ for all $j \in J$, and $(M_j, M_k) \models \psi$ for all $j < k$ in $J$. It is easy to see that $K^\ast$ is axiomatizable by a sentence in $\Ll_{\omega_1, \omega}$. Moreover, for each $\alpha < \omega_1$, there is a sequence $\seq{M_i : i < \alpha}$ in $K^\ast$. By \cite[Theorem 12(i)]{kei71}, this implies that $K^\ast$ contains an $I$-indexed member, as desired. $\dagger_{\text{Claim 1}}$

  Fix $I$, $\seq{M_s : s \in I}$ as in Claim 1. Fix $N_0'$ such that $N_0$ is brimmed over $N_0'$ and $p \rest N_0$ does not fork over $N_0'$.

  For any fixed infinite $J \subseteq I$, write $M_J := \bigcup_{s \in J} M_s$. Assume now that $M_I$ is brimmed over $M_J$. Let $N_0^J := M_J$, $N_1^J := M_I$. Let $N_3^J$ be brimmed over $N_1^J$. By categoricity and uniqueness of brimmed models, there exists $f_0 : N_0' \cong M_0$, $f_0^J : N_0 \cong N_0^J$, $f_1^J : N_1 \cong N_1^J$, and $f_3^J : N_3 \cong N_3^J$ such that $f_0 \subseteq f_0^J \subseteq f_1^J \subseteq f_3^J$. Let $f_2^J := f_3^J \rest N_2$ and let $N_2^J := f_2^J[N_2]$. Note that $\LWNF_{\s} (N_0^J, N_1^J, N_2^J, N_3^J)$ holds.

  Let $p_J := \tp (f_3^J (\bc), f_3^J[N_1], f_3^J[N_3]) = \tp (f_3^J (\bc), M_I, N_3^J)$. Since we are assuming that $\tp (\bc, N_1, N_3)$ forks over $N_0$, we have that $p_J$ forks over $N_0^J$. Moreover $p_J \rest N_0^J$ does not fork over $M_0$.

  \underline{Claim 2}: If $J$ has no last elements, $I \backslash J$ has no first elements, and $t \in I \backslash J$, then $p_J \rest M_t$ forks over $N_0^J$.

  \underline{Proof of Claim 2}: Suppose that $p_J \rest M_t$ does not fork over $N_0^J$. Note that $M_t$ is brimmed over $M_J$. Find $N_1'$ such that $N_0 \leas N_1' \leas N_1$, $N_1'$ is brimmed over $N_1$, and $f_1^J : N_1' \cong M_t$. Let $\bb' \in \fct{<\omega}{N_1'}$ be such that $\tp (\bb', N_0, N_1') = \tp (\bb, N_0, N_1)$. Since $\LWNF_{\s} (N_0, N_1, N_2, N_3)$, we know that $\tp (\bb', N_2, N_3)$ does not fork over $N_0$, hence by uniqueness $\tp (\bb, N_2, N_3) = \tp (\bb', N_2, N_3)$. But we have assumed that $\tp (\bc, N_1', N_3)$ does not fork over $N_0$ and $\bb' \in \fct{<\omega_1}{N_1'}$, hence by a simple renaming we obtain a contradiction to our hypothesis that symmetry failed. $\dagger_{\text{Claim 2}}$
  
  \underline{Claim 3}: If $J_1 \subsetneq J_2$ are both proper initial segments of $I$ with no last elements and $J_2 \backslash J_1$ has no first elements, then $p_{J_1} \neq p_{J_2}$.

  \underline{Proof of Claim 3}: Fix $t \in J_2 \backslash J_1$. By Claim 2, $p_{J_1} \rest M_t$ forks over $N_0^{J_1}$. We claim that $p_{J_2} \rest M_t$ does not fork over $N_0^{J_1}$. Indeed recall that $N_0^{J_2} = M_{J_2}$ and by assumption $p_{J_2} \rest N_0^{J_2}$ does not fork over $M_0$. Therefore by monotonicity also $p_{J_2} \rest M_t$ does not fork over $M_{J_1} = N_0^{J_1}$. $\dagger_{\text{Claim 3}}$

  To finish, observe that there are $2^{\aleph_0}$ cuts of $I$ as in Claim 3. Therefore stability fails, a contradiction.   
\end{proof}

The next corollary does not assume categoricity, but uses amalgamation in $\aleph_0$, rather than just density of amalgamation bases.

\begin{cor}\label{sl-aleph1-cor}
  If $\K$ is \emph{very} nicely stable in $\aleph_0$ and weakly $(<\aleph_0, \aleph_0)$-local, then $\K$ has a superlimit of cardinality $\aleph_1$.
\end{cor}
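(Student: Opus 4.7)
The plan is to reduce to the $\aleph_0$-categorical setup of Theorem \ref{good-frame-thm} and then invoke the implication $(\ref{equiv-1}) \Rightarrow (\ref{equiv-3})$ of Theorem \ref{sym-equiv}. First, Corollary \ref{sl-cor} supplies a superlimit $M^\ast \in \K_{\aleph_0}$; let $\K'$ be the AEC generated by $M^\ast$ as in Corollary \ref{ap-just}, so that $\K'$ is $\aleph_0$-categorical, very nicely stable in $\aleph_0$, and shares its ordering with $\K$ on $\K'_{\aleph_0}$. Using amalgamation in $\aleph_0$ in both $\K$ and $\K'$ (and the fact that any amalgam in $\K_{\aleph_0}$ can be absorbed into a copy of $M^\ast$ by density of $\K'$ in $\K$), one checks that orbital types over $M \in \K'_{\aleph_0}$ coincide whether computed in $\K$ or in $\K'$. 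The weak locality of $\K$ at $M^\ast$ thus gives $(<\aleph_0, \aleph_0)$-locality of $\K'$ at $M^\ast$, which transfers to every $M \in \K'_{\aleph_0}$ by $\aleph_0$-categoricity.

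Applying Theorem \ref{good-frame-thm} to $\K'$ now yields a type-full good $(<\omega, \aleph_0)$-frame $\s'$ on $\K'$ that is $\goodp$. By Theorem \ref{sym-equiv}, $\K'$ has a superlimit $M^{\ast\ast} \in \K'_{\aleph_1}$. It remains to verify that $M^{\ast\ast}$ is a superlimit in $\K_{\aleph_1}$ itself. Non-maximality in $\K$ is inherited from non-maximality in $\K'$; and any $\lea$-chain of copies of $M^{\ast\ast}$ of countable cofinal type is automatically a $\leap{\K'}$-chain by Corollary \ref{ap-just}(4), so its union is isomorphic to $M^{\ast\ast}$ by the superlimit property already established inside $\K'_{\aleph_1}$.

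The main obstacle is showing $M^{\ast\ast}$ is universal in $\K_{\aleph_1}$. The strategy is to embed an arbitrary $N \in \K_{\aleph_1}$ into some $N^\dagger \in \K'_{\aleph_1}$, from which universality of $M^{\ast\ast}$ within $\K'_{\aleph_1}$ completes the task. Starting from any continuous resolution $\seq{N_i : i < \omega_1}$ of $N$ in $\K_{\aleph_0}$, build by induction a continuous chain $\seq{N_i^\ast : i < \omega_1}$ in $\K_{\aleph_0}$, together with compatible embeddings $N_i \hookrightarrow N_i^\ast$, so that each $N_i^\ast \cong M^\ast$: at successor stages, amalgamate $N_i^\ast$ with $N_{i+1}$ over $N_i$ and then enlarge the amalgam to a copy of $M^\ast$ using universality of $M^\ast$ in $\K_{\aleph_0}$; at limit stages, the superlimit property of $M^\ast$ preserves $N_i^\ast \cong M^\ast$. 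Setting $N^\dagger := \bigcup_{i < \omega_1} N_i^\ast$ yields the desired member of $\K'_{\aleph_1}$ together with an embedding $N \hookrightarrow N^\dagger$, completing the proof.
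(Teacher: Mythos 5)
Your proposal is correct and follows essentially the same route as the paper: pass to the class $\K'$ generated by the countable superlimit (Corollaries \ref{sl-cor} and \ref{ap-just}), check that locality transfers, apply Theorem \ref{good-frame-thm} to obtain a type-full $\goodp$ frame, invoke Theorem \ref{sym-equiv} to get a superlimit in $\K'_{\aleph_1}$, and then verify that it remains a superlimit (chiefly, universal) in $\K_{\aleph_1}$. The only variation is in that last step, where you embed an arbitrary $N \in \K_{\aleph_1}$ into a member of $\K'_{\aleph_1}$ by a transfinite catch-up construction, while the paper amalgamates $N$ directly with the $\aleph_1$-superlimit over a common countable submodel mapped into the countable superlimit; the two arguments are routine and interchangeable.
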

\begin{proof}
  By Corollary \ref{sl-cor}, $\K$ has a superlimit $N_0$ in $\aleph_0$. Let $\K'$ be the class generated by this superlimit, as described by the proof of Corollary \ref{ap-just}. Then $\K'$ is categorical in $\aleph_0$ and nicely stable in $\aleph_0$, hence we can apply Theorem \ref{good-frame-thm} and get a type-full $\goodp$ $\aleph_0$-frame with underlying class $\K_{\aleph_0}'$. By Theorem \ref{sym-equiv}, $\K'$ has a superlimit model in $\aleph_1$. This is also a superlimit in $\K$: the only nontrivial property to check is universality. Let $M \in \K_{\aleph_1}$. Fix any $M_0 \in \K_{\aleph_0}$ with $M_0 \lea M$. By universality of $N_0$, there exists $f: M_0 \rightarrow N_0$. Now let $N \in \K'$ be superlimit in $\aleph_1$ with $N_0 \lea N$. Using amalgamation (amalgamation in $\aleph_0$ suffices for this, see \cite[I.2.11]{shelahaecbook}), we can find $g: M \rightarrow N$ extending $f$, as needed.
\end{proof}

\section{Locality from supersimplicity}\label{supersimple-sec}

In this section, we give a sufficient condition for locality. As before, we assume:

\begin{hypothesis} 
$\K = (K, \lea)$ is an AEC with $\LS (\K) = \aleph_0$ (and countable vocabulary).
\end{hypothesis}

In the context of a nicely $\aleph_0$-stable AEC, the following definition generalizes that of a supersimple homogeneous model \cite[2.5(iv)]{buechler-lessmann}. The idea is that we want to have a nice notion of nonforking available for all finite sets (not only models). However we do not require that forking over finite sets satisfies any uniqueness requirement. Thus it is not a-priori clear that supersimplicity implies the existence of a good frame (although this will follow from $\aleph_0$-categoricity and Theorems \ref{good-frame-thm}, \ref{tameness-thm}).

Throughout this section expressions such as ``nonforking'' or ``not fork'' will refer to the relation defined in Definition (\ref{supersimple-def})(\ref{ss-inv}) below. We give examples after the definition.

\begin{defin}\label{supersimple-def}
  Assume that $\K$ is nicely $\aleph_0$-stable and categorical in $\aleph_0$. We say that $\K$ is \emph{supersimple} if there exists a 4-ary relation $\nf$ such that:

  \begin{enumerate}
  \item $\nf (A, B, C, N)$ implies that $N \in \K_{\aleph_0}$, $A \cup B \cup C \subseteq |N|$ and $A, B, C$ are all finite or countable. We write $\nfs{A}{B}{C}{N}$ instead of $\nf (A, B, C, N)$. Below, we may abuse notation and write e.g.\ $A_1A_2$ instead of $A_1 \cup A_2$, or $\nfs{\ba}{\bb}{\bc}{N}$ instead of $\nfs{A}{B}{C}{N}$, where $A$, $B$, $C$ stand for the ranges of $\ba$, $\bb$, and $\bc$ respectively.
  \item Normality: $\nfs{A}{B}{C}{N}$ if and only if $\nfs{A}{AB}{AC}{N}$.
  \item\label{ss-inv} Invariance under $\K$-embeddings: If $f: N \rightarrow N'$ and $A \cup B \cup C \subseteq |N|$, then $\nfs{A}{B}{C}{N}$, if and only if $\nfs{f[A]}{f[B]}{f[C]}{N'}$. This shows that $\nf$ is really a relation on types, so we say \emph{$\otp (\bb / C; N)$ does not fork over $A$} if $\nfs{A}{\bb}{C}{N}$.
  \item Monotonicity: If $\nfs{A}{B}{C}{N}$ and $A \subseteq A' \subseteq B' \subseteq B$ then $\nfs{A'}{B'}{C}{N}$. 
  \item Symmetry: If $\nfs{A}{B}{C}{N}$, then $\nfs{A}{C}{B}{N}$.
  \item Local character: If $M \in \K_{\aleph_0}$ and $p \in \oS^{<\omega} (M)$, then there exists $A \subseteq |M|$ finite such that $p$ does not fork over $A$.
  \item Extension: If $p \in \oS^{<\omega} (C; N)$ does not fork over $A \subseteq C$, then there is $q \in \oS^{<\omega} (N)$ such that $q$ extends $p$ and $q$ does not fork over $A$.
  \item Transitivity: If $\nfs{A}{B}{C}{N}$ and $\nfs{C}{B}{D}{N}$ with $A \subseteq C \subseteq D$, then $\nfs{A}{B}{D}{N}$.
  \item Relationship with splitting: If $M \lea N$ are both in $\K_{\aleph_0}$, $p \in \oS^{<\omega} (N)$, and $p$ does not fork over $M$, then there is $A \subseteq |M|$ finite such that $p$ does not split over $A$.
  \end{enumerate}
\end{defin}
\begin{remark}
  It may be helpful to compare Definition \ref{supersimple-def} with Definition \ref{good-frame-def}. The idea of \ref{supersimple-def} is to give a sort of analog of good frames but to allow types over sets. Note that the statement of symmetry in \ref{good-frame-def} is more technical, precisely because types over sets are not allowed. However the idea is the same. Another difference is that local character in \ref{supersimple-def} is stated as ``every type does not fork over a finite set''. In \ref{good-frame-def}, it is stated as ``every type over the union of an increasing chain does not fork over a previous element of the chain''. Again, the lack of types over sets makes it impossible to state the former in good frames.
\end{remark}
\begin{example} \
  \begin{enumerate}
  \item Working inside a supersimple homogeneous model $N$ (in the sense of \cite[2.5(iv)]{buechler-lessmann}), we can define $\nfs{A}{B}{C}{N}$ to hold if and only if $B$ is $\aleph_0$-free from $C$ over $A$ (in the sense of \cite[2.2]{buechler-lessmann}). The first four conditions of Definition \ref{supersimple-def} are then easy to check. Symmetry is \cite[2.14]{buechler-lessmann} and transitivity is \cite[2.15]{buechler-lessmann}. Local character and extension are given by the definition in \cite[2.5]{buechler-lessmann}. Now if in addition $N$ is $\aleph_0$-stable (in the sense of \cite[5.1]{buechler-lessmann} or equivalently in the sense given here), then by \cite[5.2]{buechler-lessmann} the last axiom of Definition \ref{supersimple-def} holds.
  \item Let $\K$ be a FUR class \cite[2.17]{group-config-kangas-apal} (this includes in particular all quasiminimal pregeometry classes). Then letting $\nf$ be defined as in \cite[2.38]{group-config-kangas-apal}, we can also check that it satisfies Definition \ref{supersimple-def}. 
  \end{enumerate}
\end{example}

We first show that if a type $p$ over a finite set does not fork over a subset $A$ of a countable model $M$, then the type is realized inside $M$.

\begin{lem}\label{nf-realize}
  Assume that $\K$ is nicely $\aleph_0$-stable, supersimple, and categorical in $\aleph_0$. Let $M \lea N$ both be in $\K_{\aleph_0}$, and let $B \subseteq |N|$ be finite. Let $p \in \oS^{<\omega} (B; N)$. If $p$ does not fork over $B \cap |M|$, then $p$ is realized in $M$.
\end{lem}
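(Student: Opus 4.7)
Plan. Set $A := B \cap |M|$. My strategy is to extend $p$ to a type over $N$, extract a finite nonsplitting base via the supersimplicity axioms, use categoricity to force $M$ to be brimmed and hence homogeneous, locate a candidate realization $\bar a' \in M$ via homogeneity, and finally invoke Lemma~\ref{uq-lem} to show $\bar a'$ realizes $p$.

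First I would apply the extension axiom of Definition~\ref{supersimple-def} to extend $p$ to a type $\tilde p \in \oS^{<\omega}(N)$ that does not fork over $A$, and let $\bar a$ realize $\tilde p$ in some $N^+ \gea N$, so $\nfs{A}{\bar a}{N}{N^+}$ holds. Two applications of symmetry sandwiched around the monotonicity move $A \subseteq M$ on the left (legal because $A \subseteq |M| \subseteq |N|$) yield $\nfs{M}{\bar a}{N}{N^+}$, i.e.\ $\tilde p$ does not fork over the model $M$. The ``relationship with splitting'' axiom then delivers a finite $A' \subseteq |M|$ such that $\tilde p$ does not split over $A'$.

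Next, nice $\aleph_0$-stability plus $\aleph_0$-categoricity forces every countable model to be an amalgamation base (density of amalgamation bases produces one, and categoricity spreads the property to all), so $\K$ has amalgamation in $\aleph_0$, $\K^\ast = \K$ in the sense of Theorem~\ref{brimmed-homog-equiv}, and $M$ is the brimmed countable model. Theorem~\ref{brimmed-homog-equiv} then tells me $M$ is $(\oS^{<\omega}(\emptyset), \aleph_0)$-homogeneous; applied with $A'$ already realized in $M$ by itself, homogeneity produces $\bar a' \in M$ with $\otp (\bar a'/A'; M) = \otp (\bar a/A'; N^+)$.

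Finally, set $\tilde p' := \otp (\bar a'/N; N)$ (well-defined as $\bar a' \in M \lea N$) and compare $\tilde p$ with $\tilde p'$ via Lemma~\ref{uq-lem}. The type $\tilde p$ does not split over $A' \subseteq |M|$ by construction; $\tilde p'$ does not split over the finite set enumerating $\bar a'$ (any zigzag witnessing $\otp (\bar b_1/\bar a'; N) = \otp (\bar b_2/\bar a'; N)$ fixes $\bar a'$, hence automatically witnesses the corresponding equality with $\bar a'$ prepended). So the only nontrivial hypothesis of Lemma~\ref{uq-lem} is that $\tilde p \rest D = \tilde p' \rest D$ for every finite $D \subseteq |M|$. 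I would verify this by amalgamating $M$ with $N^+$ over $A'$ identifying $\bar a$ with $\bar a'$ (legal by the type equality above) and then leveraging nonsplitting of $\tilde p$ over $A'$, together with homogeneity of $M$ over $A'$, to transport the agreement from $A'$ up to $A' \cup D$ for any finite $D$. Lemma~\ref{uq-lem} then gives $\tilde p \rest B = \tilde p' \rest B$, so $p = \tilde p \rest B$ is realized by $\bar a' \in M$. The hard part is this last verification: bridging from $A'$ to an arbitrary finite $D \supseteq A'$ inside $|M|$ is where nonsplitting of $\tilde p$ must be used crucially to ``freeze'' the dependence of $\tilde p$ on elements of $|M| \setminus A'$, typically via a double amalgamation, and this is precisely the point where the proof genuinely uses supersimplicity (through the splitting axiom) rather than just homogeneity of the brimmed $M$.
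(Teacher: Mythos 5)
Your overall skeleton (extend $p$ to $N$, extract a finite nonsplitting base $A' \subseteq |M|$, find a candidate realization in $M$, conclude via Lemma~\ref{uq-lem}) matches the paper's, and the opening moves --- getting $\nfs{M}{\ba}{N}{N^+}$ by symmetry and monotonicity and then a finite nonsplitting base via the ``relationship with splitting'' axiom --- are fine. The gap is at the step you yourself flag as ``the hard part,'' and it is a genuine one: your candidate $\ba'$ is chosen only so that $\otp(\ba'/A'; M) = \otp(\ba/A'; N^+)$, i.e.\ the two types agree over the \emph{finite set} $A'$, whereas Lemma~\ref{uq-lem} requires agreement over all finite subsets of a \emph{model} $M_\ast \lea N$ over whose finite subsets both types are nonsplitting. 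Nonsplitting over $A'$ plus agreement over $A'$ does not yield agreement over a model containing $A'$: the proof of Lemma~\ref{uq-lem} works precisely because the base is a brimmed model, so that an arbitrary finite tuple $\bd$ can be replaced by some $\bd'$ \emph{inside the base} realizing the same type over $A'$; when the base is the finite set $A'$ itself there is no such $\bd'$, and the proposed ``double amalgamation'' only produces a copy $\bd''$ of $\bd$ in some amalgam, not an element of $A'$ or of a set where the two types are already known to agree. Indeed, if agreement over a finite nonsplitting base implied agreement over the model, $(<\aleph_0,\aleph_0)$-locality would follow almost immediately from Fact~\ref{splitting-thm} and Lemma~\ref{uq-lem}, making the back-and-forth machinery of Lemmas~\ref{ind-m-lem}--\ref{finite-eq-lem} (and this whole section) unnecessary.

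The paper's fix is exactly what your argument is missing: after choosing $A'$, it picks a countable model $M_0 \lea M$ with $A' \subseteq |M_0|$ and $M$ brimmed over $M_0$, and realizes the \emph{whole} restriction $q \rest M_0$ in $M$ by some $\bd$ --- this uses universality of $M$ over $M_0$, not homogeneity over the finite set $A'$. Then the two types agree on all finite subsets of the model $M_0$, and a short nonforking-calculus argument (symmetry and extension to get $\nfs{M_0}{\bd}{M_0'}{N}$ for a model $M_0'$ containing $B$, hence nonsplitting of $\otp(\bd/M_0';N)$ over a finite subset of $M_0$) puts both types in the hypotheses of Lemma~\ref{uq-lem} over the base $M_0$, yielding agreement on $B$. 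If you replace your homogeneity step with this ``realize $q \rest M_0$ by universality'' step, your proof goes through.
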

\begin{proof}
  Extending $N$ if necessary, we can assume without loss of generality that $N$ is brimmed over $M$. Let $q \in \oS^{<\omega} (N)$ be a nonforking extension of $p$. Let $\bb$ be an enumeration of $B$. Fix $A' \subseteq |M|$ finite and big-enough such that $B \cap |M| \subseteq A'$, $\otp (\bb / M; N)$ does not split and does not fork over $A'$ and $q$ does not split over $A'$ (recall Fact \ref{splitting-thm}). Let $M_0 \lea M$ be such that $M$ is brimmed over $M_0$ and $M_0$ contains $A'$. Let $\bd \in M$ be such that $\bd$ realizes $q \rest M_0$. We have that $\nfs{M_0}{\bb}{M}{N}$ so by symmetry and monotonicity, $\nfs{M_0}{\bd}{\bb}{N}$. By extension, we can pick $M_0'$ such that $M_0 \lea M_0' \lea N$, $M_0'$ contains $\bb$, and $\nfs{M_0}{\bd}{M_0'}{N}$. Now as $q \rest M_0'$ does not split over a finite subset of $M_0$ and $\bd$ realizes $q \rest M_0$, we must have by Lemma \ref{uq-lem} that $\bd$ realizes $q \rest B = p$, as desired.
\end{proof}

We will prove locality in supersimple $\aleph_0$-stable AECs by a back and forth argument. More precisely, we start with $M \lea N$, $N$ brimmed over $M$, and elements $\ba_1, \ba_2 \in N$ whose types over every finite subset of $M$ match. First, we will do a back and forth argument to find an automorphism of $N$ sending $\ba_1$ to $\ba_2$ and fixing $M$ \emph{setwise} (Lemma \ref{aut-nf}). We will then use this automorphism and nonsplitting to build another automorphism that fixes $M$ \emph{pointwise} (Theorem \ref{tameness-thm}).

The next lemma starts setting up the stage by making sure that we can map an element of $M$ to an element of $M$.

\begin{lem}\label{ind-m-lem}
  Assume that $\K$ is nicely $\aleph_0$-stable, supersimple, and categorical in $\aleph_0$. Let $M \lea N$ both be in $\K_{\aleph_0}$. Let $\bb_1, \bb_2 \in N$, $\ba_1, \ba_2, \bc_1 \in M$ be such that $\otp (\ba_1 \bb_1; N) = \otp (\ba_2 \bb_2; N)$. If $\nfs{\ba_1}{\bb_1}{\bc_1}{N}$, then there exists $\bc_2 \in M$ such that $\otp (\ba_1 \bb_1 \bc_1; N) = \otp (\ba_2 \bb_2 \bc_2; N)$.
\end{lem}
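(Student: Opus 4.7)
The plan is to use the supersimple nonforking relation to transport the independence from the $(\ba_1,\bb_1,\bc_1)$ side over to the $(\ba_2,\bb_2)$ side, and then apply Lemma \ref{nf-realize} to find $\bc_2$ inside $M$ realizing the matching type.

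First, using $\otp(\ba_1\bb_1;N)=\otp(\ba_2\bb_2;N)$, I would find $N^*\in\K_{\aleph_0}$ with $N\lea N^*$ and a $\K$-embedding $g:N\to N^*$ such that $g(\ba_1\bb_1)=\ba_2\bb_2$ (a direct amalgamation argument; nice $\aleph_0$-stability and categoricity ensure we can work inside an amalgamation base even without full amalgamation). Set $\bc := g(\bc_1)$. Next I would transport the nonforking: from $\nfs{\ba_1}{\bb_1}{\bc_1}{N}$, symmetry gives $\nfs{\ba_1}{\bc_1}{\bb_1}{N}$, and two applications of normality (one forward, one backward) rewrite this as $\nfs{\ba_1}{\bc_1}{\ba_1\bb_1}{N}$; invariance under $\K$-embeddings then yields $\nfs{\ba_2}{\bc}{\ba_2\bb_2}{N^*}$.

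The key step is enlarging the base from $\ba_2$ to $D:=(\ba_2\cup\bb_2)\cap|M|$. For this I would first derive the auxiliary base-monotonicity statement: if $\nfs{A}{B}{C}{N}$ and $A\subseteq A'\subseteq C$, then $\nfs{A'}{B}{C}{N}$. This follows by a symmetry-monotonicity-symmetry sandwich using the axioms in Definition \ref{supersimple-def}: $\nfs{A}{C}{B}{N}$ by symmetry, then the stated monotonicity with $B'=C$ enlarges the base to $A'$, giving $\nfs{A'}{C}{B}{N}$, and a final symmetry yields $\nfs{A'}{B}{C}{N}$. Applying this with $A=\ba_2$, $A'=D$, $B=\bc$, $C=\ba_2\bb_2$ gives $\nfs{D}{\bc}{\ba_2\bb_2}{N^*}$, i.e. the type $\otp(\bc/\ba_2\bb_2;N^*)$ does not fork over $(\ba_2\bb_2)\cap|M|$. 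Lemma \ref{nf-realize} then produces $\bc_2\in M$ realizing it. Chaining the resulting type-equalities through $g$ gives $\otp(\ba_1\bb_1\bc_1;N)=\otp(g(\ba_1\bb_1\bc_1);N^*)=\otp(\ba_2\bb_2\bc;N^*)=\otp(\ba_2\bb_2\bc_2;N^*)$, which is the desired conclusion.

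The main obstacle is the base-enlargement step: the monotonicity axiom in Definition \ref{supersimple-def} permits enlarging the base $A$ to $A'$ only while simultaneously constraining the second argument to stay between $A'$ and $B$, so it does not give pure base enlargement at face value. The detour through symmetry is essential in order to swap the roles of the second and third arguments before invoking monotonicity, so that $D$, which sits inside $\ba_2\bb_2$ rather than inside $\bc$, can be accommodated as the new base. A minor side point is the opening step, where orbital-type equality must be realized by a single embedding into a common extension; this is standard assuming (density of) amalgamation, which is provided by nice $\aleph_0$-stability.
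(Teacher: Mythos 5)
Your proposal is correct and follows essentially the same route as the paper: use the orbital-type equality to transport $\bc_1$ to the other side (the paper uses an automorphism of $N$ after making $N$ brimmed over $M$, you use a $\K$-embedding into an amalgam, which is an immaterial difference), observe via symmetry, normality and invariance that the resulting type over $\ba_2\bb_2$ does not fork over a finite subset of $M$, and invoke Lemma \ref{nf-realize}. Your explicit derivation of base monotonicity from the symmetry--monotonicity--symmetry sandwich is a point the paper's proof leaves implicit (it applies Lemma \ref{nf-realize} with nonforking only over $\ba_2$ rather than over all of $(\ba_2\bb_2)\cap|M|$), so that extra care is welcome but does not change the argument.
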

\begin{proof}
  Extending $N$ if necessary, we can assume without loss of generality that $N$ is brimmed over $M$. By symmetry, $\nfs{\ba_1}{\bc_1}{\bb_1}{N}$. Let $f$ be an automorphism of $N$ sending $\ba_1 \bb_1$ to $\ba_2 \bb_2$. By invariance, $\nfs{\ba_2}{f(\bc_1)}{\bb_2}{N}$ (but we do \emph{not} know that $f(\bc_1) \in M$). Let $q := \otp (f (\bc_1) / \ba_2 \bb_2; N)$. We have to show that $q$ is realized in $M$. Since $q$ does not fork over $\ba_2 \in M$, this is exactly what Lemma \ref{nf-realize} tells us.
\end{proof}

Our main lemma in the back and forth argument will be:

\begin{lem}[The back and forth lemma]\label{b-f-lem}
  Assume that $\K$ is nicely $\aleph_0$-stable, supersimple, and categorical in $\aleph_0$. Let $M \lea N$ both be in $\K_{\aleph_0}$ with $N$ brimmed over $M$. Let $\bb_1, \bb_2 \in N$, $\ba_1, \ba_2 \in M$ be such that $\otp (\ba_1 \bb_1; N) = \otp (\ba_2 \bb_2; N)$ and $\nfs{\ba_\ell}{\bb_\ell}{M}{N}$ for $\ell = 1,2$. If $\bd_1 \in N$, $\bd_1' \in M$, there exists $\bc_1, \bc_2, \bd_2' \in M$ and $\bd_2 \in N$ such that:
 
  \begin{enumerate}
  \item $\otp (\ba_1 \bb_1 \bc_1 \bd_1' \bd_1; N) = \otp (\ba_2 \bb_2 \bc_2 \bd_2' \bd_2; N)$.
  \item $\nfs{\ba_\ell \bc_\ell \bd_\ell'}{\bb_\ell \bd_\ell}{M}{N}$ for $\ell = 1,2$.
  \end{enumerate}
\end{lem}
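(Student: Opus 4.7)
The proof should be a three-step construction, handling in turn the nonforking condition on side~1, the transfer of the $M$-elements, and finally the transfer of $\bd_1$ to an element of $N$.

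First, I would use the local character axiom of the supersimple nonforking relation applied to the type $\otp(\bb_1\bd_1/M;N)$ to obtain a finite set $E \subseteq |M|$ over which this type does not fork. Using monotonicity and normality, I may enlarge $E$ to include $\ba_1 \cup \bd_1'$; letting $\bc_1 \in M$ enumerate $E \setminus (\ba_1 \cup \bd_1')$ then gives $\nfs{\ba_1\bc_1\bd_1'}{\bb_1\bd_1}{M}{N}$, which is the $\ell=1$ instance of conclusion~(2). Second, I would invoke Lemma~\ref{ind-m-lem} with the concatenated tuple $\bc_1\bd_1' \in M$ playing the role of ``$\bc_1$'' in that lemma. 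The required hypothesis $\nfs{\ba_1}{\bb_1}{\bc_1\bd_1'}{N}$ follows from the standing assumption $\nfs{\ba_1}{\bb_1}{M}{N}$ and monotonicity in the third coordinate. This produces a matching tuple $\bc_2\bd_2' \in M$ with $\otp(\ba_1\bb_1\bc_1\bd_1';N) = \otp(\ba_2\bb_2\bc_2\bd_2';N)$.

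The third step, finding $\bd_2 \in N$, is the main obstacle. Following the strategy of the proof of Lemma~\ref{ind-m-lem}: after (if necessary) extending $N$ so that it is brimmed over $M$, I would apply Theorem~\ref{brimmed-homog-equiv} (with the remark about adding constants) to obtain $f \in \Aut(N)$ with $f(\ba_1\bb_1\bc_1\bd_1') = \ba_2\bb_2\bc_2\bd_2'$, and set $\bd_2^{(0)} := f(\bd_1) \in N$. Invariance immediately gives conclusion~(1) with $\bd_2^{(0)}$ in place of $\bd_2$, and it also yields $\nfs{\ba_2\bc_2\bd_2'}{\bb_2\bd_2^{(0)}}{f[M]}{N}$---the transferred nonforking holds over the model $f[M]$, which is generally not $M$. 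To correct this, observe that the type $r := \otp(\bd_2^{(0)}/\ba_2\bb_2\bc_2\bd_2';N)$ does not fork over $\ba_2\bc_2\bd_2'$ (by invariance from the $\ell=1$ case and monotonicity). Choose a countable submodel $M^* \lea N$ with $M \cup \{\bb_2\} \subseteq |M^*|$; then $N$ is brimmed over $M^*$ by the chain structure of brimmed models. Use the extension axiom to extend $r$ to a nonforking extension $\tilde r \in \oS^{<\omega}(M^*)$, and realize $\tilde r$ in $N$ by some $\bd_2 \in N$ via brimmedness. Conclusion~(1) follows since $\tilde r \rest \ba_2\bb_2\bc_2\bd_2' = r$, which equals the image of $\otp(\bd_1/\ba_1\bb_1\bc_1\bd_1';N)$ under $f$.

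The most delicate part is verifying conclusion~(2) for $\ell = 2$. From $\otp(\bd_2/M^*;N) = \tilde r$ we obtain $\nfs{\ba_2\bc_2\bd_2'}{\bd_2}{M^*}{N}$, and since $\bb_2 \in |M^*|$ we can absorb $\bb_2$ into the base model $M^*$ to conclude $\nfs{\ba_2\bc_2\bd_2'}{\bb_2\bd_2}{M^*}{N}$; shrinking $M^*$ down to $M$ by monotonicity in the third coordinate then yields $\nfs{\ba_2\bc_2\bd_2'}{\bb_2\bd_2}{M}{N}$. The hardest technical point is the absorption step, which relies on the fact that $\bb_2$ is already an element of the base model $M^*$ so that its inclusion in the realizer does not alter the nonforking statement; making this precise may require combining symmetry, normality, and the hypothesis $\nfs{\ba_2}{\bb_2}{M}{N}$, and is where one must be most careful with the exact formulation of the monotonicity axiom in Definition~\ref{supersimple-def}.
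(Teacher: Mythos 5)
Your first two steps (local character to get $\bc_1$ with $\nfs{\ba_1\bc_1\bd_1'}{\bb_1\bd_1}{M}{N}$, then Lemma \ref{ind-m-lem} applied to the tuple $\bc_1\bd_1'$ to produce $\bc_2\bd_2'$) are correct and match the paper's proof, up to the cosmetic difference that the paper first absorbs $\bd_1'$ into $\ba_1,\ba_2$ via Lemma \ref{ind-m-lem} and only then invokes local character. The trouble is in your third step, and it is a genuine gap, in two places. First, the assertion that $r := \otp(\bd_2^{(0)}/\ba_2\bb_2\bc_2\bd_2';N)$ does not fork over $\ba_2\bc_2\bd_2'$ does not follow from the $\ell=1$ case: transporting $\nfs{\ba_1\bc_1\bd_1'}{\bb_1\bd_1}{M}{N}$ by $f$ gives that the pair $\bb_2\bd_2^{(0)}$ is \emph{jointly} independent from $f[M]$ over $\ba_2\bc_2\bd_2'$, which says nothing about the independence of $\bd_2^{(0)}$ from $\bb_2$ over that base (compare first-order forking: $bd\nf_A M$ does not yield $d\nf_A b$). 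So the extension axiom cannot be applied with base $\ba_2\bc_2\bd_2'$. Second, the ``absorption'' step is not valid as stated: normality only permits adding elements of the \emph{base} to the second coordinate, and $\bb_2$ is not in the base $\ba_2\bc_2\bd_2'$; the intermediate statement $\nfs{\ba_2\bc_2\bd_2'}{\bb_2\bd_2}{M^*}{N}$ with $\bb_2\in M^*$ asserts an independence of $\bb_2$ from a set containing $\bb_2$ over a base omitting it, and no axiom of Definition \ref{supersimple-def} delivers this.

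The repair is exactly what the paper does: keep $\bb_2$ in the base throughout the extension step. From the $\ell=1$ case one \emph{can} derive (by normality and monotonicity, enlarging the base inside the second coordinate) that $\nfs{\ba_2\bb_2\bc_2\bd_2'}{\bd_2^{(0)}}{\ba_2\bb_2\bc_2\bd_2'}{N}$, so the extension axiom applies with base $\ba_2\bb_2\bc_2\bd_2'$ and produces $\bd_2$ realizing $r$ with $\nfs{\ba_2\bb_2\bc_2\bd_2'}{\bd_2}{M\bb_2}{N}$. Normality now legitimately gives $\nfs{\ba_2\bb_2\bc_2\bd_2'}{\bb_2\bd_2}{M\bb_2}{N}$, since $\bb_2$ lies in the base. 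Finally, the hypothesis $\nfs{\ba_2}{\bb_2}{M}{N}$ (which you correctly sensed must enter) yields $\nfs{\ba_2\bc_2\bd_2'}{\ba_2\bb_2\bc_2\bd_2'}{M}{N}$ by monotonicity and normality, and then transitivity, monotonicity and symmetry lower the base to $\ba_2\bc_2\bd_2'$ and the third coordinate to $M$, giving $\nfs{\ba_2\bc_2\bd_2'}{\bb_2\bd_2}{M}{N}$. (Two small additional points: enlarging the base of a nonforking relation by elements of the third coordinate uses symmetry, not just ``monotonicity and normality''; and the auxiliary model $M^*$ should be taken from the chain witnessing that $N$ is brimmed over $M$, not as an arbitrary countable submodel containing $M\cup\{\bb_2\}$.)
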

\begin{proof}
  Using Lemma \ref{ind-m-lem}, we can enlarge $\ba_1$ and $\ba_2$ if necessary to assume without loss of generality that $\bd_1'$ is empty. Now using local character, fix $\bc_1 \in M$ such that $\nfs{\ba_1 \bc_1}{\bb_1 \bd_1}{M}{N}$. By Lemma \ref{ind-m-lem}, there exists $\bc_2 \in M$ such that $\otp (\ba_1 \bb_1 \bc_1; N) = \otp (\ba_2 \bb_2 \bc_2; N)$. Let $f$ be an automorphism of $N$ witnessing this. By extension and monotonicity, pick $\bd_2 \in N$ such that $\otp (\bd_2 / \ba_2 \bb_2 \bc_2; N) = \otp (f (\bd_1) / \ba_2 \bb_2 \bc_2; N)$ and $\nfs{\ba_2 \bb_2 \bc_2}{\bd_2}{M \bb_2}{N}$. It remains to see that $\nfs{\ba_2 \bc_2}{\bb_2 \bd_2}{M}{N}$. We do this using a standard nonforking calculus argument: by normality, $\nfs{\ba_2 \bb_2 \bc_2}{\bb_2 \bd_2}{M \bb_2}{N}$ and we also know from the hypotheses of the lemma that $\nfs{\ba_2}{\bb_2}{M}{N}$, so by monotonicity and normality $\nfs{\ba_2 \bc_2}{\ba_2 \bb_2 \bc_2}{M}{N}$. Now using transitivity, monotonicity, and symmetry, $\nfs{\ba_2\bc_2}{\bb_2 \bd_2}{M}{N}$, as desired.
\end{proof}

We can now build the desired automorphism which fixes $M$ setwise.

\begin{lem}\label{aut-nf}
  Assume that $\K$ is nicely $\aleph_0$-stable, supersimple, and categorical in $\aleph_0$. Let $M \in \K_{\aleph_0}$, $A \subseteq |M|$ finite, $p, q \in \oS^{<\omega} (M)$ such that $p$ and $q$ do not fork over $A$. If $p \rest A = q \rest A$, then there exists an automorphism $f$ of $M$ fixing $A$ such that $f (p) = q$.
\end{lem}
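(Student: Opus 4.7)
The plan is to realize $p$ and $q$ inside a common countable brimmed extension $N$ of $M$ and, by iterating Lemma \ref{b-f-lem}, to construct an automorphism $\hat f$ of $N$ that fixes $A$ pointwise, sends a realization of $p$ to one of $q$, and carries $M$ to itself setwise. The desired $f \in \Aut(M)$ is then just $\hat f \rest M$.

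First, by nice $\aleph_0$-stability pick $N \in \K_{\aleph_0}$ with $M \lea N$ and $N$ brimmed over $M$; in particular every type in $\oS^{<\omega}(M)$ is realized in $N$, so choose $\bb_1, \bb_2 \in N$ realizing $p$ and $q$ respectively. Let $\ba$ enumerate the finite set $A$. The hypothesis that $p$ and $q$ do not fork over $A$ translates into $\nfs{A}{\bb_\ell}{M}{N}$ for $\ell = 1,2$, while $p \rest A = q \rest A$ gives $\otp(\ba\bb_1;N) = \otp(\ba\bb_2;N)$. Thus the hypotheses of Lemma \ref{b-f-lem} are in force with $\ba_1 = \ba_2 = \ba$.

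Fix enumerations $M = \{m_k : k < \omega\}$ and $N = \{n_k : k < \omega\}$ and build by induction on $k < \omega$ tuples $\bc_1^k, \bc_2^k \in M$ and $\bd_1^k, \bd_2^k \in N$ satisfying
\[
\otp(\ba \bc_1^k \bb_1 \bd_1^k; N) = \otp(\ba \bc_2^k \bb_2 \bd_2^k; N) \quad \text{and} \quad \nfs{\ba \bc_\ell^k}{\bb_\ell \bd_\ell^k}{M}{N} \text{ for } \ell = 1,2,
\]
and such that by stage $k$ the ranges of $\bc_1^k$ and $\bc_2^k$ both contain $\{m_0, \ldots, m_{k-1}\}$ while the ranges of $\bb_1 \bd_1^k$ and $\bb_2 \bd_2^k$ both contain $\{n_0, \ldots, n_{k-1}\}$. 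Each inductive step is a ``forward'' or ``backward'' application of Lemma \ref{b-f-lem}: in a forward step, feed $\bd_1 := n_k$ and $\bd_1' := m_k$ to the lemma and read off matching $\bd_2 \in N$, $\bd_2' \in M$; in a backward step, swap the roles of the subscripts $1$ and $2$ (which is legal since the hypotheses of Lemma \ref{b-f-lem} are symmetric in them) to cover the next $n_k$ and $m_k$ on the other side. Alternating these two kinds of steps exhausts $M$ and $N$ from both sides while maintaining the type-equality invariant throughout.

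Passing to the union of these finite partial correspondences produces an automorphism $\hat f \in \Aut(N)$ with $\hat f \rest A = \id_A$, $\hat f(\bb_1) = \bb_2$, and $\hat f[M] = M$ setwise (because the $M$-parts of the two limiting sequences each enumerate $M$). Setting $f := \hat f \rest M$ gives an automorphism of $M$ fixing $A$ pointwise; and $f(p) = q$ because $\hat f(\bb_1) = \bb_2$ realizes $q$ over $f[M] = M$ inside $N$. The real substance is already in Lemma \ref{b-f-lem}, and the only delicate point here is the bookkeeping of the back-and-forth, which is precisely what the two input slots $\bd_1 \in N$ and $\bd_1' \in M$ of Lemma \ref{b-f-lem} were designed to accommodate.
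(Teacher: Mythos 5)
Your proposal is correct and follows essentially the same route as the paper's proof: realize $p$ and $q$ by $\bb_1,\bb_2$ in a brimmed extension $N$ of $M$, iterate Lemma \ref{b-f-lem} in a back-and-forth to get $g \in \Aut(N)$ fixing $A$ with $g(\bb_1)=\bb_2$ and $g[M]=M$, and set $f := g \rest M$. You merely spell out the bookkeeping of the back-and-forth that the paper leaves implicit.
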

\begin{proof}
  Let $N \in \K_{\aleph_0}$ be brimmed over $M$. Say $p = \otp (\bb_1 / M; N)$, $q = \otp (\bb_2 / M; N)$. Let $\ba$ be an enumeration of $A$ and let $\ba_\ell := \ba$, $\ell = 1,2$. Now apply Lemma \ref{b-f-lem} repeatedly in a back and forth argument to build an automorphism $g$ of $N$ fixing $A$ such that $g (\bb_1) = \bb_2$ and $g[M] = M$. Let $f := g \rest M$.
\end{proof}

We now show that we can actually build an automorphism fixing $M$ \emph{pointwise}. The next lemma is the main argument for this:

\begin{lem}\label{finite-eq-lem}
  Assume that $\K$ is nicely $\aleph_0$-stable, supersimple, and categorical in $\aleph_0$. Let $M \in \K_{\aleph_0}$ and let $N$ be brimmed over $M$. Let $\bb_1, \bb_2 \in N$ be such that $\otp (\bb_1 / A; N) = \otp (\bb_2 / A; N)$ for all finite $A \subseteq |M|$. For any $\bd_1 \in N$, there exists $\bd_2 \in N$ such that $\otp (\bb_1 \bd_1 / A; N) = \otp (\bb_2 \bd_2 / A; N)$ for all finite $A \subseteq |M|$.
\end{lem}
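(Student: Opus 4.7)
The plan is to use the local character axiom of supersimplicity to pin down a finite $A_0 \subseteq |M|$ that controls the type of $\bb_1 \bd_1$ over $M$, and then to use brimmedness of $N$ to define $\bd_2$ as the image of $\bd_1$ under an automorphism of $N$ fixing $A_0$. Concretely, extending $M \cup \bb_1$ to a countable sub-model of $N$ if necessary and applying local character, choose a finite $A_0 \subseteq |M|$ such that $\otp(\bb_1 \bd_1 / M; N)$ does not fork over $A_0$; enlarging via Fact \ref{splitting-thm} if needed, also arrange that $\otp(\bb_1 / M; N)$ does not split over $A_0$. The hypothesis supplies $\otp(\bb_1 / A_0; N) = \otp(\bb_2 / A_0; N)$, so brimmedness of $N$ together with Theorem \ref{brimmed-homog-equiv} (applied after naming the elements of $A_0$ as constants) yields an automorphism $g$ of $N$ fixing $A_0$ pointwise and sending $\bb_1$ to $\bb_2$. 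Set $\bd_2 := g(\bd_1) \in N$.

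It remains to verify $\otp(\bb_1 \bd_1 / A; N) = \otp(\bb_2 \bd_2 / A; N)$ for each finite $A \subseteq |M|$. Invariance of nonforking under $g$ handles $A \subseteq A_0$ immediately. For general $A$, I would extend $\otp(\bb_1 \bd_1 / M; N)$ to a type over $N$ via the extension axiom of supersimplicity (keeping it nonforking over $A_0$), convert this into a nonsplitting statement over a finite subset of $M$ using the axiom linking nonforking with nonsplitting, and then apply the weak uniqueness Lemma \ref{uq-lem}. A useful auxiliary observation is that nonsplitting transfers from $\bb_1$ to $\bb_2$ via the hypothesis: for any $\bc, \bc' \in M$ with $\otp(\bc / A_0; M) = \otp(\bc' / A_0; M)$, the finite-equivalence hypothesis $\otp(\bb_1 / A_0 \bc \bc'; N) = \otp(\bb_2 / A_0 \bc \bc'; N)$ lets one replace $\bb_1$ by $\bb_2$ in any splitting-style argument.

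The main obstacle I anticipate is the mismatch between $M$ and $g[M]$: since $g$ fixes only $A_0$ pointwise and not all of $M$ setwise, direct invariance yields nonforking of $\otp(\bb_2 \bd_2 / g[M]; N)$ over $A_0$ rather than of $\otp(\bb_2 \bd_2 / M; N)$ over $A_0$. To circumvent this, I would pass to types over all of $N$ via extension, work with nonsplitting over the finite set $A_0$ (which $g$ does fix, and which sits inside $M$), and invoke Lemma \ref{uq-lem} anchored at $A_0 \subseteq M$. The careful combination of these moves with the nonsplitting transfer observation above should then establish the desired $A$-by-$A$ matching for every finite $A \subseteq |M|$.
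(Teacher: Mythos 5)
There is a genuine gap, and it is exactly at the obstacle you flag yourself. Your automorphism $g$ of $N$, obtained from homogeneity (Theorem \ref{brimmed-homog-equiv}) over the finite set $A_0$, fixes only $A_0$ pointwise and has no reason to map $M$ onto $M$. This is fatal for the verification step: given a finite $C \subseteq |M|$ with enumeration $\bc_2$, the natural move is to compare $\bc_1 := g^{-1}(\bc_2)$ with $\bc_2$ using nonsplitting of $\otp(\bb_1 \bd_1 / M; N)$ over $A_0$, but $\bc_1$ lies in $g^{-1}[M]$, not in $M$, and nonsplitting only constrains pairs of tuples \emph{from $M$}. Likewise, invariance gives you that $\otp(\bb_2 \bd_2 / g[M]; N)$ does not split over $A_0$, which says nothing about $\otp(\bb_2 \bd_2 / M; N)$. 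Your proposed repair via extension plus Lemma \ref{uq-lem} is circular: weak uniqueness needs the two types to agree on \emph{all} finite subsets of some base model $M_0 \lea M$ containing $A_0$ (not just on subsets of $A_0$), and producing that agreement for $\bb_1\bd_1$ versus $\bb_2\bd_2$ is precisely the statement of Lemma \ref{finite-eq-lem} being proved. A stationarity-over-finite-sets principle would close the loop, but the paper explicitly does not assume uniqueness for nonforking over finite sets, and your ``nonsplitting transfer'' observation only handles $\bb_2$ alone, not the pair $\bb_2\bd_2$.

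The missing ingredient is Lemma \ref{aut-nf}: there is an automorphism $f$ of $M$ \emph{itself}, fixing $A_0$, carrying $\otp(\bb_1/M;N)$ to $\otp(\bb_2/M;N)$. This is where the supersimplicity axioms (symmetry, extension, transitivity, local character, via Lemmas \ref{nf-realize}, \ref{ind-m-lem} and the back-and-forth Lemma \ref{b-f-lem}) actually get used; it cannot be replaced by plain homogeneity of $N$ over $A_0$. The paper then extends $f$ to an automorphism $g$ of $N$ with $g(\bb_1) = \bb_2$ and sets $\bd_2 := g(\bd_1)$; since now $g[M] = M$, the tuple $\bc_1 = g^{-1}(\bc_2)$ lies in $M$, nonsplitting of $\otp(\bb_1\bd_1/M;N)$ over $A_0$ applies to the pair $\bc_1, \bc_2$, and applying $g$ to the resulting equality of types finishes the argument. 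Your choice of $A_0$ and of $\bd_2 := g(\bd_1)$ matches the paper, but without first building the automorphism of $M$ the final comparison over arbitrary finite $A \subseteq |M|$ does not go through.
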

\begin{proof}
  Let $p := \otp (\bb_1 / M; N)$, $q := \otp (\bb_2 / M; N)$. Fix $A \subseteq |M|$ finite such that both $p$ and $q$ do not fork over $A$ and $\otp (\bb_1 \bd_1 /M ; N)$ does not split over $A$. By Lemma \ref{aut-nf}, there exists an automorphism $f$ of $M$ fixing $A$ such that $f (p) = q$. Let $g$ be an automorphism of $N$ extending $f$ such that $g (\bb_1) = \bb_2$ and let $\bd_2 := g (\bd_1)$. We claim that this works. Fix $C \subseteq |M|$ finite and let $\bc_2$ be an enumeration of $C$. Let $\bc_1 := g^{-1} (\bc_2)$. We know that $\otp (\bc_1 / A; N) = \otp (\bc_2 / A; N)$, so by nonsplitting, $\otp (\bb_1 \bd_1 \bc_1 / A; N) = \otp (\bb_1 \bd_1 \bc_2 / A; N)$. Applying $g$, we have that $\otp (\bb_1 \bd_1 \bc_1 / A; N) = \otp (\bb_2 \bd_2 \bc_2 / A; N)$. Putting the two equalities together, $\otp (\bb_1 \bd_1 \bc_2 / A; N) = \otp (\bb_2 \bd_2 \bc_2 / A; N)$, so $\otp (\bb_1 \bd_1 / C; N) = \otp (\bb_2 \bd_2 / C; N)$, as desired.
\end{proof}

We have arrived to the main theorem of this section:

\begin{thm}\label{tameness-thm}
  If $\K$ is nicely $\aleph_0$-stable, supersimple, and categorical in $\aleph_0$, then $\K$ is $(<\aleph_0, \aleph_0)$-local (recall Definition \ref{loc-def}).
\end{thm}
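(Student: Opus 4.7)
Fix $M \in \K_{\aleph_0}$ and $p, q \in \oS^{<\omega}(M)$ with $p \rest A = q \rest A$ for every finite $A \subseteq |M|$. Using nice $\aleph_0$-stability, pick $N \in \K_{\aleph_0}$ brimmed over $M$ and, by universality, pick $\bb_1, \bb_2 \in N$ realizing $p$ and $q$ respectively. The hypothesis yields $\otp(\bb_1 / A; N) = \otp(\bb_2 / A; N)$ for every finite $A \subseteq |M|$, placing us exactly in the setting of Lemma \ref{finite-eq-lem}. The strategy is to run a standard back-and-forth with Lemma \ref{finite-eq-lem} to produce a $\K$-automorphism $f$ of $N$ that is the identity on $M$ and sends $\bb_1$ to $\bb_2$, forcing $p = q$.

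Concretely, enumerate $|N| = \{n_k : k < \omega\}$ and build increasing finite tuples $\bd_1^k, \bd_2^k$ in $N$ so that the invariant
\[
  \otp(\bb_1 \bd_1^k / A; N) = \otp(\bb_2 \bd_2^k / A; N) \quad \text{for every finite } A \subseteq |M|
\]
is preserved at every stage, every $n_k$ appears on both sides by some later stage, and whenever $n_k \in |M|$ it is matched with itself. At odd stages I would extend on the domain side by a direct application of Lemma \ref{finite-eq-lem}; at even stages, I would extend on the range side by applying the symmetric version (swap $\bb_1$ and $\bb_2$ in the lemma).

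The one delicate point—and the main obstacle—is enforcing that $M$-elements are matched to themselves. When the next element to add is some $m \in |M|$ on the domain side, Lemma \ref{finite-eq-lem} produces $d' \in N$ on the range side preserving the invariant. Specializing to $A = \{m\}$, the invariant gives $\otp(m / \{m\}; N) = \otp(d' / \{m\}; N)$; unwinding the definition of orbital type yields $N^\ast$ and $\K$-embeddings $g_1, g_2 : N \to N^\ast$ with $g_1(m) = g_2(m)$ and $g_1(m) = g_2(d')$, so $g_2(m) = g_2(d')$ and hence $d' = m$ by injectivity. Thus the match is forced, and $m$ ends up on both sides. The symmetric case is handled identically.

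Taking unions, the bijection $f : N \to N$ determined by $\bb_1 \bd_1^k \mapsto \bb_2 \bd_2^k$ fixes $|M|$ pointwise and sends $\bb_1$ to $\bb_2$. Because each finite stage preserves the orbital type over $\emptyset$ (the special case $A = \emptyset$ of the invariant), $f$ preserves all atomic $\tau_\K$-formulas between models of $\K$ and is therefore a $\tau_\K$-isomorphism, hence—by isomorphism closure of $\K$—a $\K$-automorphism of $N$. Applying $f$ to $p = \otp(\bb_1 / M; N)$ gives $\otp(\bb_2 / M; N) = q$, while $f \rest |M| = \id_{|M|}$ gives $f(p) = p$; so $p = q$, as required.
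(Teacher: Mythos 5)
Your proof is correct and follows exactly the route the paper takes: realize $p$ and $q$ by $\bb_1,\bb_2$ in a model $N$ brimmed over $M$, then iterate Lemma \ref{finite-eq-lem} in a back-and-forth to produce an automorphism of $N$ fixing $M$ pointwise and carrying $\bb_1$ to $\bb_2$. The paper leaves the back-and-forth bookkeeping implicit; your observation that elements of $M$ are automatically matched to themselves (by specializing the invariant to $A=\{m\}$ and using injectivity of the amalgamating embeddings) is precisely the detail needed to make that step rigorous.
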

\begin{proof}
  Let $M \in \K_{\aleph_0}$ and let $p, q \in \oS^{<\omega} (M)$ be such that $p \rest A = q \rest A$ for all finite $A \subseteq |M|$. Let $N$ be brimmed over $M$ and let $\bb_1, \bb_2 \in N$ realize $p$ and $q$ respectively. Now apply Lemma \ref{finite-eq-lem} in a back and forth argument to get an automorphism of $N$ fixing $M$ taking $\bb_1$ to $\bb_2$.
\end{proof}

\bibliographystyle{amsalpha}
\bibliography{F709}

\end{document}